\title{A Continuized View on Nesterov Acceleration for Stochastic Gradient Descent and Randomized Gossip}
\author{
	Mathieu Even\textsuperscript{1,$\star$},
	Raphaël Berthier\textsuperscript{1,$\star$},
	Francis Bach\textsuperscript{1},
	Nicolas Flammarion\textsuperscript{2},\\
	\textbf{Pierre Gaillard\textsuperscript{3},
	Hadrien Hendrikx\textsuperscript{1},
	Laurent Massoulié\textsuperscript{1,4} and
	Adrien Taylor\textsuperscript{1}} \\
\\
	\textsuperscript{$\star$} Equal contributions\\
	\\
	\textsuperscript{1}Inria - Département d’informatique de l’ENS, PSL Research University, Paris, France\\
\\
	\textsuperscript{2}School of Computer and Communication Sciences \\
	Ecole Polytechnique F\'ed\'erale de Lausanne\\
	\\
		\textsuperscript{3}Univ. Grenoble Alpes, Inria, CNRS, Grenoble INP, LJK, 38000 Grenoble, France\\
		\\
	\textsuperscript{4}MSR-Inria Joint Centre \\
}
\renewcommand{\leq}{\leqslant}
\renewcommand{\geq}{\geqslant}
\renewcommand{\le}{\leqslant}
\renewcommand{\ge}{\geqslant}
\def\<{\langle}
\def\>{\rangle}
\def\|{\Vert}
\definecolor{NavyBlue}{rgb}{0.1,0.1,0.6}
\def\bfone{{\boldsymbol 1}}
\newcommand{\diff}{\mathrm{d}}
\newcommand{\N}{\mathbb{N}}
\newcommand{\E}{\mathbb{E}}
\newcommand{\R}{\mathbb{R}}
\renewcommand{\P}{\mathds{P}}
\newcommand{\cE}{\mathcal{E}}
\newcommand{\cP}{\mathcal{P}}
\newcommand{\cG}{\mathcal{G}}
\newcommand{\cF}{\mathcal{F}}
\newcommand{\cV}{\mathcal{V}}
\newcommand{\cL}{\mathcal{L}}
\newcommand{\esp}[1]{{{\E\left[ #1\right]}}} % ||1||% Operators
\newcommand{\NRM}[1]{{{\left\| #1\right\|}}} % ||1||% Operators
\newcommand{\lambdaz}{\lambda^{(z)}}
\newcommand{\lambdax}{\lambda^{(y)}}
\newcommand{\edgeuv}{{\{v, w\}}}
\newcommand{\edgeuvk}{{\{v_k, w_k\}}}
\newcommand{\proj}{R}
\DeclareMathOperator{\Id}{Id}
\newtheorem{definition}{Definition}
\newtheorem{theorem}{Theorem}
\newtheorem{proposition}{Proposition}
\newtheorem{cor}{Corollary}
\newtheorem{remark}{Remark}
\begin{document}

\maketitle

\begin{abstract}
We introduce the ``continuized'' Nesterov acceleration, a close variant of Nesterov acceleration whose variables are indexed by a continuous time parameter. The two variables continuously mix following a linear ordinary differential equation and take gradient steps at random times. This continuized variant benefits from the best of the continuous and the discrete frameworks: as a continuous process, one can use differential calculus to analyze convergence and obtain analytical expressions for the parameters; and a discretization of the continuized process can be computed exactly with convergence rates similar to those of Nesterov original acceleration. We show that the discretization has the same structure as Nesterov acceleration, but with random parameters. 
We provide continuized Nesterov acceleration under deterministic as well as stochastic gradients, with either additive or multiplicative noise. 
Finally, using our continuized framework and expressing the gossip averaging problem as the stochastic minimization of a certain energy function, we provide the first rigorous acceleration of asynchronous gossip algorithms.
\end{abstract}

\section{Introduction}
\label{sec:introduction}
In the last decades, the emergence of numerous applications in statistics, machine learning and signal processing has led to a renewed interest in first-order optimization methods \citep{bottou2018optimization}. They enjoy a low iteration cost necessary to the analysis of large datasets. The performance of first-order methods was largely improved thanks to acceleration techniques (see the review by~\citet{daspremont2021acceleration} and the many references therein), starting with the seminal work of \citet{nesterov1983method}.

Let $f:\R^d \to \R$ be a convex and differentiable function, minimized at $x_* \in \R^d$. We assume throughout the paper that $f$ is $L$-smooth, i.e.,
\begin{align} \label{eq:smoothness}
	\forall x,y \in \R^d, \qquad f(y) \leq f(x) + \langle \nabla f(x), y-x \rangle + \frac{L}{2} \Vert y-x \Vert^2 \, .
\end{align}
In addition, we sometimes assume that $f$ is $\mu$-strongly convex for some $\mu > 0$, i.e., 
\begin{align} \label{eq:strong_convexity}
	\forall x,y \in \R^d, \qquad f(y) \geq f(x) + \langle \nabla f(x), y-x \rangle + \frac{\mu}{2} \Vert y-x \Vert^2 \, .
\end{align}
For the problem of minimizing $f$, gradient descent is well-known to achieve a rate $f(x_k)-f(x_*) = O(k^{-1})$ in the smooth case, and a rate $f(x_k) - f(x_*) = O((1-\mu/L)^k)$ in the smooth and strongly convex case. In both cases, Nesterov introduced an alternative method with essentially the same iteration cost, while achieving faster rates: it converges with rate $O(k^{-2})$ in the smooth convex case and with rate $O((1-\sqrt{\mu/L})^k)$ in the smooth and strongly convex case~\citep{nesterov2003introductory}. These rates are then optimal among all black-box first-order methods that access gradients and linearly combine them~\citep{nesterov2003introductory,nemirovskij1983problem}.

Nesterov acceleration relies on several sequences of iterates---two or three, depending on the formulation---and on a clever blend of gradient steps and mixing steps between the sequences. 
%Different interpretations and motivations underlying the precise structure of accelerated schemes were approached in many works, including~
Different interpretations and motivations underlying the precise structure of accelerated schemes were approached in many works, including~\citep{bubeck2015geometric,flammarion2015averaging, arjevani2015lower,kim2016optimized,allen2014linear}.
A large number of these works studied continuous time equivalents of Nesterov acceleration, obtained by taking the limit when stepsizes vanish, or from a variational framework. The continuous time index~$t$ of the limit allowed to use differential calculus to study the convergence of these equivalents. Examples of studies relying on continuous time interpretation include~\citep{su2014differential,krichene2015accelerated,wilson2016lyapunov,wibisono2016variational,betancourt2018symplectic,diakonikolas2019approximate,shi2018understanding,shi2019acceleration,attouch2018fast,attouch2019rate,zhang2018direct,muehlebach2019dynamical}.

\paragraph{Continuized Nesterov acceleration.} In this paper, we propose another continuous time equivalent to Nesterov acceleration, which we refer to as the \emph{continuized} Nesterov acceleration, which avoids vanishing stepsizes. It is built by considering two sequences $x_t, z_t \in \R^d$, $t \in \R_{\geq 0}$, that continuously mix following a linear ordinary differential equation (ODE), and that take gradient steps at random times $T_1, T_2, T_3, \dots$. Thus, in this modeling, mixing and gradient steps alternate randomly. 

%In this setting, the continuized acceleration is particularly symmetric, while this symmetry is hardly seen directly on Nesterov acceleration (compare below Equations \eqref{eq:continuized-1}-\eqref{eq:continuized-2} with \eqref{eq:nest-1}-\eqref{eq:nest-3}). Moreover, t
Thanks to the continuous index $t$ and some stochastic calculus, one can differentiate averaged quantities (expectations) with respect to~$t$. In particular, this leads to simple analytical expressions for the optimal parameters as functions of $t$, while the optimal parameters of Nesterov accelerations are defined by recurrence relations that are complicated to solve. 

The discretization $\tilde{x}_k = x_{T_k}, \tilde{z}_k = z_{T_k}, k\in \N$, of the continuized process can be computed directly and exactly: the result is a recursion of the same form as Nesterov iteration, but with randomized parameters, and performs similarly to Nesterov original deterministic version both in theory and in simulations. 

%There are particular situations where Nesterov acceleration can not be implemented and the continuized acceleration can. First, a major advantage of the continuized acceleration over Nesterov acceleration is that the parameters of the algorithm depend only on time $t \in \R_{\geq 0}$, and not on the number of past gradient steps $k$. This is useful in distributed implementations, where the total number of gradient steps taken in the network may not be known to a particular node. Second, the continuized modeling can be relevant when gradient steps arrive at random times, as in asynchronous parallel computing for instance. Gossip algorithms represent another example where both features are present: the total number of past communications in the network at a given time is unknown to all nodes, and communication between nodes occur at random times. This motivated \citet{even2020asynchrony} to consider a similar continuized procedure, for communication steps instead of gradient steps, in order to accelerate gossip algorithms; their work is the source of inspiration for the present paper. 

The continuized framework can be adapted to various settings and extensions of Nesterov acceleration. In what follows, we study how the continuized acceleration behaves in the presence of \emph{additive} and \emph{multiplicative} noise in the gradients. In the multiplicative noise setting, our acceleration satisfies a convergence rate similar to that of \cite{jain2018accelerating} and depends on the \emph{statistical condition number} of the problem at hand. The two acceleration schemes are not directly comparable as we work in a continuized setting and only deal with pure multiplicative noise. Our analysis is nevertheless much simpler, as it closely mimics that of Nesterov acceleration.
%However, our acceleration technique is interestingly simple as it closely mimics the proof of Nesterov acceleration.

\paragraph{Application to accelerated gossip algorithms.} 
%In some situations, the continuized version of Nesterov acceleration can be practically implemented thanks to its dependence on continuous time $t\in\R_{\geq0}$ instead of the number of past gradient steps $k\in\N$ that is used in Nesterov's accelerated method. In particular, this can be used in distributed settings, where the total number of gradient stepstaken in the network may not be known to a particular node. The continuized modeling is also relevant in asynchronous parallel computing where gradient steps arrive at random times. 
The continuized modeling is natural in asynchronous parallel computing where gradient steps arrive at random times. More importantly, there are situations where the continuized version of Nesterov acceleration can be practically implemented while the original acceleration can not. In distributed settings, for instance, the total number $k$ of gradient steps taken in the network is typically not known to each particular node; an advantage of the continuized acceleration is that it requires to know only the time $t$ and not $k$.

Gossip algorithms typically feature such asynchronous and distributed behaviors \citep{boyd2006randomized}. In gossip problems, nodes of a network aim at computing the global average of all their values by communicating only locally (with their neighbors), and without centralized coordination. In this set-up, pairs of adjacent nodes communicate at random times, asynchronously, and in parallel, so that the total number of past communications in the network at a given time is unknown to all nodes. In this paper, we formulate the gossip problem as a stochastic optimization problem. Thanks to the continuized formalism, we  naturally obtain accelerated gossip algorithms that can be implemented in an asynchronous and distributed fashion.

Synchronous gossip algorithms rely on all nodes to communicate simultaneously \citep{dimakis2010synchgossip}. Accelerating synchronous gossip algorithms have been studied in previous works, including \emph{SSDA} \citep{scaman2017optimal}, Chebyshev acceleration \citep{montijano2011chebgossip}, Jacobi-Polynomial acceleration \citep{berthier2020accelerated}.
To that day, acceleration in the asynchronous setting has also been studied in a few works (see for instance geographic gossip \citep{Dimakis_2008}, shift registers \citep{LIU2013873}, \emph{ESDAC} \citep{hendrikx2018accelerated}, and randomized Kaczmarz methods \cite{loizou2019provably}). 
However, no algorithm in an asynchronous framework has been rigorously proven to achieve an accelerated rate for general graphs \citep{dimakis2008geographic}. Other acceleration schemes \citep{hendrikx2018accelerated,loizou2019provably} relied on additional synchronizations between nodes, such as the knowledge of a global iteration counter. This departs from purely asynchronous operations, hence causing practical limitation. 
Our accelerated randomized gossip algorithm (Section \ref{sec:gossip}) recovers the same accelerated rates, and only requires the knowledge of a common continuous-time $t\in\R_{\geq0}$.
%For instance, inspired by \emph{ACDM} \citep{neststich2017acdm}, \citet{hendrikx2018accelerated} introduced \emph{ESDACD}, where at each iteration, only a pair of adjacent nodes communicate, but all nodes need to make local contractions and thus need to know that an update is taking place somewhere else in the graph. This last requirement, also present in \emph{Randomized Kaczmarz} methods \citep{loizou2019provably}, is a departure from purely asynchronous operations, and thus a limitation of these methods. Our accelerated randomized gossip algorithm recovers the same accelerated rate as \cite{Dimakis_2008,loizou2019provably,hendrikx2018accelerated} for any graph, without assuming access to any global iteration counter.

In this context, the continuized acceleration should be seen as a close approximation to Nesterov acceleration, that features both an insightful and convenient expression as a continuous time process and a direct implementation as a discrete iteration. We thus hope to contribute to the understanding of Nesterov acceleration. In practice, the continuized framework is relevant for handling asynchrony in decentralized optimization, where agents of a network can not share a global iteration counter, preventing accelerated decentralized and asynchronous methods.

\paragraph{Notations.}
The index $k$ always denotes a non-negative integer, while indices $t,s$ always denote non-negative reals.

\paragraph{Structure of the paper.} In Section \ref{sec:reminders}, we recall standard results on gradient descent and Nesterov acceleration. In Section \ref{sec:continuized}, we introduce a continuized variant of Nesterov acceleration. In Section~\ref{sec:implementation}, we show that discretizing the continuized acceleration yields an iterative method similar to that of Nesterov but with random parameters. In Section \ref{sec:stochastic}, we study continuized Nesterov acceleration under pure-multiplicative noise. We finally present accelerated asynchronous algorithms for the gossip problem in Section \ref{sec:gossip}, as well as for decentralized optimization in Section \ref{sec:decentralized_opt}.

\section{Reminders on Nesterov acceleration}
\label{sec:reminders}

For the sake of comparison, let us first recall the classical Nesterov acceleration. To improve the convergence rate of gradient descent, Nesterov introduced iterations of three sequences, parametrized by $\tau_k, \tau_k',\gamma_k, \gamma_k',  k\geq 0$, of the form
\begin{align}
&y_k = x_k + \tau_k(z_k-x_k) \, ,  \label{eq:nest-1}\\
&x_{k+1} = y_k - \gamma_k \nabla f(y_k) \, , \\
&z_{k+1} = z_k + \tau_k'(y_k-z_k) - \gamma_k' \nabla f(y_k ) \label{eq:nest-3}\, . 
\end{align}
Depending on whether the function $f$ is known to be (\ref{it:nest-cvx}) convex, or (\ref{it:nest-str-cvx}) strongly convex with a known strong convexity parameter, Nesterov provided a set of parameter choices for achieving acceleration.

\begin{theorem}[Convergence of accelerated gradient descent] Nesterov accelerated scheme satisfies:
	\label{thm:nesterov}
	\begin{enumerate}
		\item\label{it:nest-cvx} Choose the parameters $\tau_k = 1 - \frac{A_k}{A_{k+1}}, \tau_k' = 0, \gamma_k = \frac{1}{L}, \gamma_k' = \frac{A_{k+1}-A_k}{L}, k\geq 0$, where the sequence $A_k, k\geq 0$, is defined by the recurrence relation
		\begin{align*}
		&A_0 = 0 \, , &&A_{k+1} = A_k + \frac{1}{2}(1+\sqrt{4 A_k + 1}) \, .
		\end{align*}
		Then
		\begin{align*}
		f(x_k) - f(x_*) \leq \frac{2 L \Vert x_0 - x_* \Vert^2}{k^2} \, .
		\end{align*}
		\item\label{it:nest-str-cvx} Assume further that $f$ is $\mu$-strongly convex, $\mu > 0$. Choose the constant parameters \\$\tau_k \equiv \frac{\sqrt{\mu/L}}{1+\sqrt{\mu/L}}$, $\tau_k'\equiv \sqrt{\frac{\mu}{L}}$, $\gamma_k \equiv \frac{1}{L}$, $\gamma_k'\equiv \frac{1}{\sqrt{\mu L}}$, $k\geq 0$. Then	  
		\begin{align*}
		f(x_k) - f(x_*) \leq \left(f(x_0) - f(x_*) + \frac{\mu}{2} \Vert z_0 - x_* \Vert^2 \right) \left(1 - \sqrt{\frac{\mu}{L}}\right)^k \, .
		\end{align*}
	\end{enumerate}
\end{theorem}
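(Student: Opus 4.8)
The plan is to prove both parts with a Lyapunov (potential) function argument: exhibit a quantity that upper bounds a multiple of $f(x_k)-f(x_*)$ and that is (essentially) monotone along the iteration, then read off the rate from the monotonicity together with the growth of the scaling sequence.

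For the convex case \ref{it:nest-cvx}, I would take
\begin{align*}
\cE_k = A_k\big(f(x_k)-f(x_*)\big) + \tfrac{L}{2}\|z_k-x_*\|^2
\end{align*}
and show $\cE_{k+1}\leq \cE_k$. The recurrence for $A_k$ is engineered so that, writing $a_k = A_{k+1}-A_k$, one has $a_k^2 = A_{k+1}$ and $\tau_k = a_k/A_{k+1}$. First, smoothness \eqref{eq:smoothness} with the step $x_{k+1}=y_k-\frac1L\nabla f(y_k)$ gives $f(x_{k+1})\leq f(y_k)-\frac{1}{2L}\|\nabla f(y_k)\|^2$. Second, since $\tau_k'=0$ the update is $z_{k+1}=z_k-\frac{a_k}{L}\nabla f(y_k)$, so expanding $\frac L2\|z_{k+1}-x_*\|^2$ produces a term $\frac{a_k^2}{2L}\|\nabla f(y_k)\|^2$ that cancels exactly against the $-\frac{A_{k+1}}{2L}\|\nabla f(y_k)\|^2$ coming from the descent step, precisely because $a_k^2=A_{k+1}$. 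Third, lower bounding $f(x_k)$ and $f(x_*)$ by convexity at $y_k$, the remaining first-order terms collect into $\langle \nabla f(y_k),\, A_k(y_k-x_k)+a_k(y_k-z_k)\rangle$, which vanishes identically once one substitutes $y_k-x_k=\tau_k(z_k-x_k)$ with $\tau_k=a_k/A_{k+1}$. Hence $\cE_{k+1}\leq \cE_k\leq \cE_0=\frac L2\|x_0-x_*\|^2$ (using $A_0=0$ and $z_0=x_0$), so $f(x_k)-f(x_*)\leq \frac{L\|x_0-x_*\|^2}{2A_k}$. It then remains to lower bound $A_k$: from $a_k=\sqrt{A_{k+1}}$ one gets $\sqrt{A_{k+1}}-\sqrt{A_k}=a_k/(\sqrt{A_{k+1}}+\sqrt{A_k})\geq \frac12$, and telescoping yields $A_k\geq k^2/4$, which gives the claimed $2L\|x_0-x_*\|^2/k^2$.

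For the strongly convex case \ref{it:nest-str-cvx}, the natural choice is the geometric potential $\cV_k=f(x_k)-f(x_*)+\frac\mu2\|z_k-x_*\|^2$, and the target is the one-step contraction $\cV_{k+1}\leq (1-\rho)\cV_k$ with $\rho=\sqrt{\mu/L}$; iterating and using $f(x_k)-f(x_*)\leq \cV_k$ then gives the result. Smoothness again yields the same descent on $f(x_{k+1})$, while the $z$-update rewrites as $z_{k+1}-x_*=(1-\rho)(z_k-x_*)+\rho(y_k-x_*)-\frac{\rho}{\mu}\nabla f(y_k)$ (using $\gamma_k'=\frac{1}{\sqrt{\mu L}}=\rho/\mu$ and $\tau_k'=\rho$). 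Expanding $\frac\mu2\|z_{k+1}-x_*\|^2$, invoking strong convexity \eqref{eq:strong_convexity} at $y_k$ to produce the required quadratic terms, and substituting the mixing relation $y_k=x_k+\tau_k(z_k-x_k)$ to cancel the cross terms, the constant parameters are exactly calibrated so that the $\|\nabla f(y_k)\|^2$ contributions cancel and the factor $1-\rho$ appears in front of both the function-value and the distance terms of $\cV_k$.

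I expect the main obstacle to be the strongly convex case: unlike the convex case, where the single algebraic identity $a_k^2=A_{k+1}$ makes every cancellation transparent, the constant strongly convex parameters must simultaneously cancel the squared-gradient terms and let strong convexity supply exactly $\frac{\rho\mu}{2}\|z_k-x_*\|^2$ with the right contraction factor, which requires a careful completion of squares and bookkeeping of the cross terms rather than a single clean identity.
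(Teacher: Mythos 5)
Your proposal is correct and follows essentially the same route as the paper: the paper does not reprove this theorem but defers to the potential-function proofs of \citet[Sections 4.4.1 and 4.5.3]{daspremont2021acceleration}, which use exactly your Lyapunov functions $A_k\left(f(x_k)-f(x_*)\right)+\frac{L}{2}\Vert z_k-x_*\Vert^2$ and $f(x_k)-f(x_*)+\frac{\mu}{2}\Vert z_k-x_*\Vert^2$, and the same structure underlies the paper's own supermartingale analysis of the continuized variant in Appendix \ref{ap:proof-continuized}. Your key identities all check out: $a_k^2=A_{k+1}$ follows from the recurrence, the first-order terms vanish because $A_k\tau_k-a_k(1-\tau_k)=0$, and in the strongly convex case $\gamma_k'^2\mu/2=1/(2L)$ cancels the squared-gradient terms while $\tau_k(1+\rho)=\rho$ kills the cross terms, yielding the contraction $\cV_{k+1}\leq(1-\rho)\cV_k$.
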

This result can be found as is in \citet[Sections 4.4.1 and 4.5.3]{daspremont2021acceleration}. In the convex case, Nesterov acceleration achieves the rate $O(1/k^2)$, whereas gradient descent achieves a rate $O(1/k)$ (see \cite[Corollary 2.1.2]{nesterov2003introductory} for instance). In the strongly convex case, Nesterov acceleration achieves the rate $O((1-\sqrt{\mu/L})^k)$, where gradient descent achieves a rate $O((1-\mu/L)^k)$ (see \citep[Theorem 2.1.15]{nesterov2003introductory} for instance). In both cases, this results in a significant speedup in practice, see Figure~\ref{fig:comparison}. 

From a high-level perspective, Nesterov acceleration iterates over several variables, alternating between gradient steps (always with respect to the gradient at $y_k$) and mixing steps, where the running value of a variable is replaced by a linear combination of the other variables. However, the precise way gradient and mixing steps are coupled is rather mysterious, and the success of the proof of Theorem \ref{thm:nesterov} relies heavily on the detailed structure of the iterations. In the next section, we try to gain perspective on this structure by developing a continuized version of the acceleration. 

\section{Continuized version of Nesterov acceleration} 
\label{sec:continuized}

This paper uses several mathematical notions related to random processes. The following sections expose the results from heuristic considerations of those notions, rigorously defined in
%Rigorous definitions are provided in 
Appendix~\ref{ap:toolbox}. 

We argue that the accelerated iteration becomes more natural when considering two variables $x_t, z_t$ indexed by a continuous time $t \geq 0$, that are continuously mixing and that take gradient steps at random times. More precisely, let $T_1, T_2, T_3, \dots \geq 0$ be random times such that $T_1, T_2-T_1, T_3-T_2, \dots$ are independent identically distributed (i.i.d.), of law exponential with rate $1$ (any constant rate would do, we choose $1$ to make the comparison with discrete time $k$ straightforward). By convention, we choose that our stochastic processes $t \mapsto x_t, t \mapsto z_t$ are c\`adl\`ag almost surely, i.e., right continuous with well-defined left-limits $x_{t-}, z_{t-}$ (Definition \ref{def:cadlag} in Appendix~\ref{ap:toolbox}). Our dynamics are parametrized by functions $\gamma_t, \gamma'_t, \tau_t, \tau_t'$, $t \geq 0$. At random times $T_1, T_2, \dots$, our sequences take gradient steps 
\begin{align}
x_{T_k} = x_{T_k-} - \gamma_{T_k} \nabla f (x_{T_k-}) \, , \label{eq:gradient-1}\\
z_{T_k} = z_{T_k-} - \gamma_{T_k}' \nabla f (x_{T_k-}) \, . \label{eq:gradient-2}
\end{align}
Because of the memoryless property of the exponential distribution, in a infinitesimal time interval $[t, t+\diff t]$, the variables take gradients steps with probability $\diff t$, independently of the past. 
Between these random times, the variables mix through a linear, translation-invariant, ordinary differential equation (ODE)
\begin{align}
&\diff x_t = \eta_t (z_t - x_t) \diff t \, ,  \label{eq:continuous-part-1}\\
&\diff z_t = \eta_t'(x_t - z_t) \diff t \label{eq:continuous-part-2}\, .
\end{align}
Following the notation of stochastic calculus, we can write the process more compactly in terms of the Poisson point measure $\diff N(t) = \sum_{k\geq 1} \delta_{T_k}(\diff t)$, which has intensity the Lebesgue measure $\diff t$,
\begin{align}
\diff x_t = \eta_t (z_t - x_t) \diff t - \gamma_t \nabla f(x_t) \diff N(t) \, , \label{eq:continuized-1}\\
\diff z_t = \eta_t' (x_t - z_t) \diff t- \gamma_t' \nabla f(x_t) \diff N(t) \, .\label{eq:continuized-2}
\end{align}

Before giving convergence guarantees for such processes, let us digress quickly on why we can expect an iteration of this form to be mathematically appealing.

First, from a Markov chain indexed by a discrete time index $k$, one can associate the so-called \emph{continuized} Markov chain, indexed by a continuous time $t$, that makes transition with the same Markov kernel, but at random times, with independent exponential time intervals \citep{aldous1995reversible}. Following this terminology, we refer to our acceleration \eqref{eq:continuized-1}-\eqref{eq:continuized-2} as the continuized acceleration. The continuized Markov chain is appreciated for its continuous time parameter $t$, while keeping many properties of the original Markov chain; similarly the continuized acceleration is arguably simpler to analyze, while performing similarly to Nesterov acceleration. 

Second, it can also be compared with coordinate gradient descent methods, that are easier to analyze when coordinates are selected randomly rather than in an ordered way~\citep{wright2015coordinate}. Similarly, the continuized acceleration is simpler to analyze because the gradient steps \eqref{eq:gradient-1}-\eqref{eq:gradient-2} and the mixing steps \eqref{eq:continuous-part-1}-\eqref{eq:continuous-part-2} alternate randomly, due to the randomness of $T_k$, $k \geq 0$. 
\medskip

In analogy with Theorem \ref{thm:nesterov}, we give choices of parameters that lead to accelerated convergence rates, in the convex case \eqref{it:cvx} and in the strongly convex case \eqref{it:str-cvx}. Convergence is analyzed as a function of $t$. As $\diff N(t)$ is a Poisson point process with rate $1$, $t$ is the expected number of gradient steps done by the algorithm. Thus $t$ is analoguous to $k$ in Theorem \ref{thm:nesterov}. In the theorem below, $\E$ denotes the expectation with respect to the Poisson point process $\diff N(t)$, the only source of randomness.
\begin{theorem}[Convergence of continuized Nesterov acceleration]\label{thm:continuized} The continuized Nesterov acceleration satisfies the following two points.
\begin{enumerate}%[\indent(a)]
		\item\label{it:cvx} Choose the parameters $\eta_t = \frac{2}{t}, \eta_t' = 0, \gamma_t = \frac{1}{L}, \gamma_t' = \frac{t}{2L}$. Then 
		\begin{align*}
			\E f(x_t) - f(x_*) \leq \frac{2L\Vert z_0 -x_* \Vert^2}{t^2} \, .
		\end{align*}
		\item\label{it:str-cvx} Assume further that $f$ is $\mu$-strongly convex, $\mu > 0$. Choose the constant parameters\\ $\eta_t = \eta_t' \equiv \sqrt{\frac{\mu}{L}}$, $\gamma_t \equiv \frac{1}{L}$, $\gamma_t' \equiv \frac{1}{\sqrt{\mu L }}$. Then 
				\begin{align*}
		\E f(x_t) - f(x_*) \leq \left(f(x_0) - f(x_*) + \frac{\mu}{2} \Vert z_0 - x_* \Vert^2\right) \exp \left(-\sqrt{\frac{\mu}{L}}t\right) \, .
		\end{align*}
	\end{enumerate}
\end{theorem}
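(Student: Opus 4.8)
The plan is to run a Lyapunov (energy) function argument directly in continuous time, exploiting the fact that for the jump-plus-drift dynamics \eqref{eq:continuized-1}--\eqref{eq:continuized-2} one can differentiate expectations through an infinitesimal-generator (Dynkin) formula. For a smooth $\phi(t,x,z)$, the process $t \mapsto \phi(t,x_t,z_t)$ should satisfy
\begin{align*}
\frac{\diff}{\diff t}\E\big[\phi(t,x_t,z_t)\big] = \E\Big[ \partial_t\phi + \eta_t\langle \nabla_x\phi, z_t-x_t\rangle + \eta_t'\langle \nabla_z\phi, x_t-z_t\rangle + \phi\big(t, x_t-\gamma_t\nabla f(x_t), z_t-\gamma_t'\nabla f(x_t)\big) - \phi(t,x_t,z_t)\Big],
\end{align*}
where the first three terms are the contribution of the mixing ODE \eqref{eq:continuous-part-1}--\eqref{eq:continuous-part-2} and the last, bracketed difference carries the Poisson intensity $1$ and accounts for a gradient jump \eqref{eq:gradient-1}--\eqref{eq:gradient-2}. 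The whole proof then reduces to choosing $\phi$ so that the right-hand side is nonpositive (convex case) or at most $-\sqrt{\mu/L}\,\E[\phi]$ (strongly convex case), and integrating.

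For the convex case \eqref{it:cvx} I would take the candidate energy
\begin{align*}
\phi_t = t^2\big(f(x_t)-f(x_*)\big) + 2L\,\Vert z_t - x_* \Vert^2 .
\end{align*}
Evaluating the generator, the jump term splits into a function-value part, bounded by $L$-smoothness \eqref{eq:smoothness} with $\gamma_t=1/L$ via $f(x-\tfrac1L\nabla f(x))-f(x)\leq -\tfrac{1}{2L}\Vert\nabla f(x)\Vert^2$, and a distance part obtained by expanding $\Vert z-\gamma_t'\nabla f(x)-x_*\Vert^2-\Vert z-x_*\Vert^2$. I would then check three cancellations forced by the prescribed parameters: the coefficient of $f(x_t)-f(x_*)$ vanishes because $\tfrac{\diff}{\diff t}t^2 = t^2\eta_t$ (i.e. $\eta_t=2/t$) after using convexity $\langle\nabla f(x),x-x_*\rangle\geq f(x)-f(x_*)$; the stray inner products $\langle\nabla f(x),z-x_*\rangle$ cancel because $t^2\eta_t = 2\cdot 2L\cdot\gamma_t'$; and the $\Vert\nabla f(x)\Vert^2$ terms cancel because $\tfrac{t^2}{2L}=2L(\gamma_t')^2$, all of which hold exactly for $\gamma_t'=t/(2L)$. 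Hence $\E[\phi_t]$ is nonincreasing, so $t^2(\E f(x_t)-f(x_*))\leq \E[\phi_t]\leq \phi_0 = 2L\Vert z_0-x_*\Vert^2$, which is the claim.

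For the strongly convex case \eqref{it:str-cvx} I would instead use
\begin{align*}
\phi_t = \big(f(x_t)-f(x_*)\big) + \frac{\mu}{2}\Vert z_t - x_* \Vert^2 ,
\end{align*}
now with constant coefficients so $\partial_t\phi=0$. After the same smoothness and expansion steps (here $\gamma_t=1/L$ and $\gamma_t'=1/\sqrt{\mu L}$ make the $\Vert\nabla f(x)\Vert^2$ contributions cancel and yield $\mu\gamma_t'=\sqrt{\mu/L}$), the drift and jump inner products combine, using $\eta_t=\eta_t'=\sqrt{\mu/L}$, into $-\sqrt{\mu/L}\,\langle\nabla f(x),x-x_*\rangle$ plus a quadratic remainder. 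Invoking strong convexity \eqref{eq:strong_convexity} as $\langle\nabla f(x),x-x_*\rangle\geq f(x)-f(x_*)+\tfrac{\mu}{2}\Vert x-x_*\Vert^2$, the remaining quadratic terms should collapse to $-\tfrac{\mu}{2}\sqrt{\mu/L}\,\Vert x_t-z_t\Vert^2\leq 0$, leaving $\tfrac{\diff}{\diff t}\E[\phi_t]\leq -\sqrt{\mu/L}\,\E[\phi_t]$. Grönwall's inequality then gives $\E f(x_t)-f(x_*)\leq\E[\phi_t]\leq\phi_0\,e^{-\sqrt{\mu/L}\,t}$, as stated.

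I expect the main obstacle to be twofold. First, the exact algebraic cancellation is the real content: one must verify that the drift/jump couplings annihilate precisely for the given $\eta_t,\eta_t',\gamma_t,\gamma_t'$, and reverse-engineering the correct coefficients of $\phi_t$ (the $t^2$ and $2L$, resp.\ $1$ and $\mu/2$) is what lets the derivative hit zero rather than merely stay bounded. Second, there is a genuine technical point in rigorously justifying the generator/Dynkin formula for this càdlàg jump process (the material deferred to the Appendix toolbox), and in handling the singularity $\eta_t=2/t$ as $t\to 0^+$ in the convex case; since $t^2\eta_t=2t\to 0$ the coefficient stays integrable, so I would carry out the argument on $(\varepsilon,t]$ and let $\varepsilon\to 0$, using $\phi_\varepsilon\to 2L\Vert z_0-x_*\Vert^2$.
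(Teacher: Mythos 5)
Your proposal is correct and follows essentially the same route as the paper's proof in Appendix C: a Dynkin/generator computation for the jump-plus-drift process applied to the Lyapunov function $A_t\left(f(x_t)-f(x_*)\right)+\frac{B_t}{2}\Vert z_t-x_*\Vert^2$, with $L$-smoothness controlling the jump, convexity (or strong convexity) controlling the drift, and the prescribed parameters forcing the exact cancellations. The only cosmetic differences are that the paper derives the parameters from the cancellation conditions via the ODE $\frac{\diff^2}{\diff t^2}\sqrt{A_t}=\frac{\mu}{4L}\sqrt{A_t}$ and phrases the result as a supermartingale property of the exponentially weighted energy, whereas you verify the given parameters directly and, in the strongly convex case, use constant weights plus Gr\"onwall's inequality --- an equivalent packaging of the same argument.
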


We give an elementary sketch of proof in Appendix \ref{ap:sketch-prrof} and a complete proof in Appendix \ref{ap:proof-continuized}.
Many authors have proposed continuous-time versions of Nesterov acceleration using differential calculus, see the numerous references in the introduction. For instance, in~\citet{su2014differential}, an ODE is obtained from Nesterov acceleration by taking the joint asymptotic where the stepsizes vanish and the number of iterates is rescaled.
%For instance, in the work of \citet{su2014differential}, the equivalence is obtained from Nesterov acceleration by taking the joint asymptotic where the stepsizes vanish and the number of iterates is rescaled. The resulting limit is an ODE that must be discretized to be implemented;
The resulting ODE must be discretized to be implemented;
choosing the right discretization is not straightforward as it introduces stability and approximation errors that must be controlled \citep{zhang2018direct,shi2019acceleration,sanz2020connections}. 

On the contrary, our continuous time process \eqref{eq:continuized-1}-\eqref{eq:continuized-2} does not correspond to a limit where the stepsizes vanish. However, in Appendix \ref{ap:scaling-limit}, we check that the random continuized acceleration has the same deterministic ODE scaling limit as Nesterov acceleration. This sanity check emphasizes that the continuized acceleration is fundamentally different from previous continuous-time equivalents.   

\begin{remark}
\label{rmk:geometric}
    A similar Markovian structure can be obtained in a discrete setting by flipping \emph{i.i.d.}~coins to trigger gradient steps. By denoting $p>0$ the probability to trigger a gradient step when flipping a coin, \emph{(i)} $p=1$ gives the classical setting, and \emph{(ii)} $p\to0$ while renormalizing time gives our continuized framework. In fact, this setting with updates triggered randomly is an interpolation between the classical and continuized frameworks, and consists in replacing exponential random variables by geometric random variables of parameter $p$ for the waiting-time between updates. We thus believe the convergence guarantees described here and in the following can be adapted for this discrete scheme.
\end{remark}

% Moreover,  

\section{Discrete implementation of the continuized acceleration with random parameters}
\label{sec:implementation}

In this section, we show that the continuized acceleration can be implemented exactly as a discrete algorithm. This contrasts with the discretization of ODEs that introduces discretization errors; here, we compute exactly
\begin{align*}
&\tilde{x}_k := x_{T_{k}} \, ,  &&\tilde{y}_k := x_{T_{k+1}-} \, , &&\tilde{z}_k := z_{T_{k}} \, ,
\end{align*}
with the convention that $T_0 = 0$. The three sequences $\tilde{x}_k, \tilde{y}_k, \tilde{z}_k$, $k \geq 0$, satisfy a recurrence relation of the same structure as Nesterov acceleration, but with random weights. The resulting randomized discrete algorithm satisfies performance guarantees similar to those of Nesterov acceleration. 

\begin{theorem}[Discrete version of continuized acceleration]
	\label{thm:discretization}
	For any stochastic process of the form \eqref{eq:continuized-1}-\eqref{eq:continuized-2}, we have
	\begin{align}
	&\tilde{y}_k = \tilde{x}_k + \tau_k(\tilde{z}_k-\tilde{x}_k) \, ,  \label{eq:discretization-1}\\
	&\tilde{x}_{k+1} = \tilde{y}_k - \tilde{\gamma}_k \nabla f(\tilde{y}_k) \, , \\
	&\tilde{z}_{k+1} = \tilde{z}_k + \tau_k'(\tilde{y}_k-\tilde{z}_k) - \tilde{\gamma}_k' \nabla f(\tilde{y}_k ) \, , \label{eq:discretization-3}
	\end{align}
	for some random parameters $\tau_k, \tau_k', \tilde{\gamma}_k, \tilde{\gamma}_k'$ (that are functions of $T_k, T_{k+1}, \eta_t, \eta_t', \gamma_t, \gamma_t'$).
	\begin{enumerate}%[\indent(a)]
		\item For the parameters of Theorem \ref{thm:continuized}.(\ref{it:cvx}),
		$\tau_k = 1 - \left(\frac{T_k}{T_{k+1}}\right)^2$, $\tau_k' = 0$, $\tilde{\gamma}_k = \frac{1}{L}$, and $\tilde{\gamma}_k' = \frac{T_k}{2L}$. Then 
				\begin{align*}
			\E \left[T_k^2\left(f(\tilde{x}_{k}) - f(x_*)\right) \right] \leq {2L\Vert z_0 -x_* \Vert^2} \, .
		\end{align*}
		\item For the parameters of Theorem \ref{thm:continuized}.(\ref{it:str-cvx}), $\tau_k = \frac{1}{2}\left(1 - \exp\left(-2\sqrt{\frac{\mu}{L}}(T_{k+1}-T_k)\right)\right)$, \\$\tau_k' = \tanh\left(\sqrt{\frac{\mu}{L}}(T_{k+1}-T_k)\right)$, $\tilde{\gamma}_k = \frac{1}{L}$, and $\tilde{\gamma}_k' = \frac{1}{\sqrt{\mu L}}$. Then 
		\begin{align*}
			\E \left[\exp \left(\sqrt{\frac{\mu}{L}}T_k\right)\left(f(\tilde{x}_{k}) - f(x_*)\right) \right] \leq f(x_0) - f(x_*) + \frac{\mu}{2} \Vert z_0 - x_* \Vert^2  \, .
		\end{align*}	
	\end{enumerate}
\end{theorem}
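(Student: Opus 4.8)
The plan is to exploit that the only randomness and the only gradient steps of \eqref{eq:continuized-1}-\eqref{eq:continuized-2} occur at the isolated jump times $T_k$, so that on each interval $[T_k,T_{k+1})$ the pair $(x_t,z_t)$ evolves by the \emph{deterministic} linear ODE \eqref{eq:continuous-part-1}-\eqref{eq:continuous-part-2} alone. First I would write this ODE as $\frac{\diff}{\diff t}(x_t,z_t)^\top = A_t (x_t,z_t)^\top$ with $A_t = \left(\begin{smallmatrix} -\eta_t & \eta_t \\ \eta_t' & -\eta_t'\end{smallmatrix}\right)$ and introduce its flow $\Phi(s,t)$, the solution operator satisfying $(x_t,z_t)^\top = \Phi(s,t)(x_s,z_s)^\top$. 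Integrating over $[T_k,T_{k+1})$ and taking left limits at $T_{k+1}$ gives the exact identity $(\tilde y_k, z_{T_{k+1}-})^\top = \Phi(T_k,T_{k+1})(\tilde x_k,\tilde z_k)^\top$; since there is exactly one jump per interval, this closed-form integration is what replaces the approximation error incurred by discretizing an ODE, explaining why the discrete scheme is computed exactly.

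The key structural observation is translation invariance: $A_t(1,1)^\top = 0$ for every $t$, so the constant vector $(1,1)^\top$ is fixed by the flow and each row of $\Phi(s,t)$ sums to one. Writing $\Phi(T_k,T_{k+1}) = \left(\begin{smallmatrix} a & b \\ c & d\end{smallmatrix}\right)$ with $a+b=c+d=1$, the first row yields $\tilde y_k = a\tilde x_k + b\tilde z_k = \tilde x_k + \tau_k(\tilde z_k-\tilde x_k)$ with $\tau_k=b$, and the second row gives $z_{T_{k+1}-} = c\tilde x_k + d\tilde z_k$, which I would rewrite as an interpolation toward $\tilde y_k$: eliminating $\tilde x_k$ via $\tilde x_k = (\tilde y_k - b\tilde z_k)/a$ produces $z_{T_{k+1}-} = \tilde z_k + \tau_k'(\tilde y_k-\tilde z_k)$ with $\tau_k'=c/a$ (the identity $1-d=c$ makes this consistent, and $a>0$ for the parameter choices considered). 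Applying the single jump at $T_{k+1}$ from \eqref{eq:gradient-1}-\eqref{eq:gradient-2} then gives $\tilde x_{k+1} = \tilde y_k - \gamma_{T_{k+1}}\nabla f(\tilde y_k)$ and $\tilde z_{k+1} = z_{T_{k+1}-} - \gamma'_{T_{k+1}}\nabla f(\tilde y_k)$, which are exactly \eqref{eq:discretization-1}-\eqref{eq:discretization-3} with $\tilde\gamma_k=\gamma_{T_{k+1}}$ and $\tilde\gamma_k'=\gamma'_{T_{k+1}}$. This establishes the general recursion for any process of the form \eqref{eq:continuized-1}-\eqref{eq:continuized-2}.

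To read off the explicit parameters I would integrate the ODE in closed form, which is easy because the difference $w_t = x_t - z_t$ decouples into the scalar equation $\dot w_t = -(\eta_t+\eta_t')w_t$. In the convex case $\eta_t'=0$ forces $z_t$ constant between jumps (hence $c=0$, $\tau_k'=0$, $z_{T_{k+1}-}=\tilde z_k$) while $w_t = w_{T_k}(T_k/t)^2$, giving $\tau_k = 1-(T_k/T_{k+1})^2$. In the strongly convex case $A_t$ is constant, so
\begin{align*}
\Phi(T_k,T_{k+1}) = \exp\!\big(A(T_{k+1}-T_k)\big) = \tfrac12\begin{pmatrix} 1+m & 1-m \\ 1-m & 1+m\end{pmatrix}, \qquad m = \exp\!\Big(-2\sqrt{\tfrac{\mu}{L}}(T_{k+1}-T_k)\Big),
\end{align*}
from which $\tau_k = (1-m)/2$ and $\tau_k' = (1-m)/(1+m) = \tanh\!\big(\sqrt{\mu/L}(T_{k+1}-T_k)\big)$. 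Substituting the chosen step-size functions evaluated at the jump time then yields the stated $\tilde\gamma_k,\tilde\gamma_k'$.

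For the two convergence bounds I would not redo any estimate but reuse the Lyapunov structure behind Theorem \ref{thm:continuized}: its proof exhibits a nonnegative energy $\phi_t$ — of the form $\phi_t = \tfrac{t^2}{4L}(f(x_t)-f(x_*)) + \tfrac12\Vert z_t-x_*\Vert^2$ in the convex case, and $\phi_t = \exp(\sqrt{\mu/L}\,t)\big(f(x_t)-f(x_*)+\tfrac{\mu}{2}\Vert z_t-x_*\Vert^2\big)$ in the strongly convex case — which is shown to be a supermartingale with $\E\phi_t \le \phi_0$ (equivalently, its Dynkin drift is nonpositive). Since each $T_k$ is a stopping time for the filtration generated by $\diff N$, optional stopping gives $\E\phi_{T_k}\le\phi_0$; discarding the nonnegative $\Vert z_{T_k}-x_*\Vert^2$ term and using $x_{T_k}=\tilde x_k$ recovers exactly the two displayed inequalities. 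I expect the main obstacle to lie not in the recursion algebra (exact linear-ODE integration) but in this last step: justifying that the continuized energy is a genuine supermartingale and that optional stopping is valid at the Poisson jump times, i.e.\ verifying the integrability and regularity conditions for the Dynkin formula of the jump process — precisely the analytic content imported from the proof of Theorem \ref{thm:continuized}.
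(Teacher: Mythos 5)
Your proposal is correct and follows essentially the same route as the paper's proof: exact integration of the linear mixing ODE on each inter-jump interval to obtain the recursion with random weights (the paper writes the explicit ODE solutions where you use the flow matrix and matrix exponential, but the algebra, including the substitution $\tau_k' = \tau_k''/(1-\tau_k)$ with $\tau_k''$ the coefficient for $z_{T_{k+1}-}$, is identical), followed by the convergence bounds obtained by applying the martingale stopping theorem at the stopping time $T_k$ to the same supermartingale $\phi_t = A_t(f(x_t)-f(x_*)) + \tfrac{B_t}{2}\Vert z_t - x_*\Vert^2$ constructed in the proof of Theorem \ref{thm:continuized}, and discarding the nonnegative term. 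Your Lyapunov functions, parameter identifications, and the optional-stopping step all coincide with Appendices \ref{ap:proof-continuized} and \ref{ap:proof-thm-discretization}.
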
 
The law of $T_k$ is well known: it is the sum of $k$ i.i.d.~random variables of law exponential with rate~$1$; this is called an Erlang or Gamma distribution with shape parameter $k$ and rate $1$. One can use well-known properties of this law, such as its concentration around its expectation $\E T_k = k$, to derive corollaries of the bounds above. 
The performance guarantees are proved in Appendix \ref{ap:proof-continuized}, and the formula for the discretization is studied in \ref{ap:proof-thm-discretization}. In Appendix \ref{ap:simus-cont}, we provide simulations confirming that this discrete random algorithm has a performance similar to Nesterov's original acceleration. 

\section{Continuized Nesterov acceleration of stochastic gradient descent}
\label{sec:stochastic}

We now investigate the design of continuized accelerations of stochastic gradient descent. We assume that we do not have direct access to the gradient $\nabla f(x)$ but to a random estimate $\nabla f(x, \xi)$, where $\xi \in \Xi$ is random of law $\cP$. In the continuized framework, the randomness of the stochastic gradient and its time mix in a particularly convenient way. For similar reasons, Latz studied stochastic gradient descent as a gradient flow on a random function that is regenerated at a Poisson rate \cite{latz2021analysis}. However, this approach has the same shortcomings as the other approaches based on gradient flows: the subsequent discretization introduces non-trivial errors. We avoid this problem here. 

We keep the algorithms of the same form, replacing gradients by stochastic gradients. Let $\xi_1, \xi_2, \dots$ be i.i.d.~random variables of law $\cP$. We take stochastic gradient steps at the random times $T_1, T_2, \dots$, 
\begin{align*}
x_{T_k} = x_{T_k-} - \gamma_{T_k} \nabla f (x_{T_k-}, \xi_k) \, , \\
z_{T_k} = z_{T_k-} - \gamma_{T_k}' \nabla f (x_{T_k-}, \xi_k) \, . 
\end{align*}
Between these random times, the variables mix through the same ODE
\begin{align*}
&\diff x_t = \eta_t (z_t - x_t) \diff t \, ,  \\
&\diff z_t = \eta_t'(x_t - z_t) \diff t \, .
\end{align*}
This can be written more compactly in terms of the Poisson point measure $\diff N(t,\xi) = \sum_{k\geq 1} \delta_{(T_k,\xi_k)}(\diff t, \diff \xi)$ on $\R_{\geq 0} \times \Xi$, which has intensity $\diff t \otimes \cP$,
\begin{align}
\diff x_t = \eta_t (z_t - x_t) \diff t - \gamma_t \int_{\Xi}\nabla f(x_t, \xi) \diff N(t,\xi) \, , \label{eq:continuized-sgd-additive-1}\\
\diff z_t = \eta_t' (x_t - z_t) \diff t- \gamma_t'  \int_{\Xi}\nabla f(x_t, \xi) \diff N(t,\xi) \, . \label{eq:continuized-sgd-additive-2}
\end{align}
Here, the discussion depends on the properties satisfied by the stochastic gradients $\nabla f(x,\xi)$. In Appendix \ref{ap:additive}, we study the so-called \emph{additive noise} case. We show that the continuized acceleration satisfies perturbed convergence rates with the same choices of parameters as in Theorem \ref{thm:continuized}. We thus show some robustness of the above acceleration to additive noise. Instead, in this section, we focus on the so-called \emph{pure multiplicative noise} case, as it is crucial for the study of asynchronous gossip that follows. In this setting, parameters need to be chosen differently for our proof technique to work. A continuized acceleration is still possible, depending on the statistical condition number. 

We now focus on functions $f$ is of the following form, typical to least-squares supervised learning:
\begin{equation}
\label{eq:lst-sqr}
    \forall x \in \R^d,~f(x)=\E_{(a,b)\sim \cP} \left[\frac{1}{2}\left(b-\langle x,a\rangle \right)^2\right] \, ,
\end{equation}
where $\xi=(a,b)\in\R^d\times\R$ is random of law $\cP$. We assume that our \emph{stochastic first order oracle} is the gradient of one realization of the expectation, namely,
\begin{equation*}
    \nabla f(x,\xi) = -(b-\langle x,a \rangle)a \, , \qquad \xi=(a,b) \, .
\end{equation*}
We investigate \emph{noiseless}---or purely multiplicative---stochastic gradients, in the sense that almost surely, for $\xi=(a,b)\sim \cP$:
\begin{equation}\label{eq:noiseless}
    b=\langle x_{*},a \rangle,~\text{so that}~\nabla f(x_{*},\xi)=0 \, .
\end{equation}
Noiseless stochastic gradients are relevant in several situations, such as coordinate gradient descent with randomly sampled coordinates \cite{tseng2009coordinate,nesterov2012efficiency, wright2015coordinate} (where $\nabla f(x,\xi)=m\langle \nabla f(x),e_i\rangle e_i$ with $i$ uniformly random in $\{1, \dots, d\}$), over-parameterized regime for least squares regression \cite{vaswani2019fast}, function interpolation and gossip algorithms \cite{berthier2020tight}. 

For a symmetric non-negative matrix $A$ and a vector $x$, we denote $\NRM{x}_A^2=x^{\top}Ax$. Let $H=\E[aa^{\top}]$ be the Hessian of $f$. Let $R^2$ be the smallest positive real number such that:
\begin{equation}\label{eq:R_squared}
    \esp{\NRM{a}^2aa^{\top}} \preccurlyeq R^2 H \, .
\end{equation}
Further, similarly to \citet{jain2018accelerating}, we define the statistical condition number of the problem as the smallest $\Tilde{\kappa}>0$ such that:
\begin{equation}\label{eq:kappa_tilde}
    \esp{\NRM{a}^2_{H^{-1}}aa^{\top}} \preccurlyeq \Tilde{\kappa} H \, .
\end{equation}
\begin{theorem}[Continuized acceleration with pure multiplicative noise]\label{thm:multiplicative-noise} Assume that \eqref{eq:noiseless}, \eqref{eq:R_squared} and~\eqref{eq:kappa_tilde} hold true. Then the continuized acceleration satisfies the following.
\begin{enumerate}
\item  Choose the parameters $\eta_t = \frac{2}{t}, \eta_t' = 0, \gamma_t = \frac{1}{R^2}, \gamma_t' = \frac{t}{2R^2\Tilde{\kappa}}$. Then 
		\begin{align*}
			\frac{1}{2}\E \NRM{x_t-x_*}^2 \leq \frac{R^2\Tilde{\kappa}\Vert z_0 -x_* \Vert^2_{H^{-1}}}{t^2} \, .
		\end{align*}
\item Assume further that $f$ is $\mu$-strongly convex, i.e., all eigenvalues of $H$ are greater or equal to $\mu$, where $\mu > 0$. The condition number of $f$ is then defined as $\kappa=R^2/\mu$. For the parameters $\eta_t=\eta'_t=\frac{1}{\sqrt{\kappa\Tilde{\kappa}}}$, $\gamma_t=\frac{1}{R^2}$ and $\gamma'_t=\frac{1}{R^2}\sqrt{\frac{\kappa}{ \Tilde{\kappa}}}$, we have:
    \begin{equation*}
        \frac{1}{2}\E \NRM{x_t-x_*}^2\leq \left( \frac{1}{2}\NRM{x_0-x_*}^2+ \frac{\mu}{2}\NRM{z_0-x_*}^2_{H^{-1}}\right)\exp\left(-\frac{t}{\sqrt{\kappa \Tilde{\kappa}}}\right).
    \end{equation*}
\end{enumerate}
\end{theorem}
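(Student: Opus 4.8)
The plan is to run the Lyapunov (energy) argument familiar from accelerated methods, adapted to the jump process \eqref{eq:continuized-sgd-additive-1}--\eqref{eq:continuized-sgd-additive-2}. Set $u_t = x_t - x_*$ and $w_t = z_t - x_*$, and note that under \eqref{eq:noiseless} the stochastic gradient is $\nabla f(x,\xi) = \langle x - x_*, a\rangle a$, so a jump sends $u \mapsto u - \gamma_t\langle u, a\rangle a$ and $w \mapsto w - \gamma_t'\langle u,a\rangle a$. I would track a nonnegative energy $\mathcal{E}_t$ built from $\NRM{u_t}^2$ and $\NRM{w_t}_{H^{-1}}^2$ and show $t\mapsto\E[\mathcal{E}_t]$ decays at the claimed rate. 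The only tool needed is the infinitesimal generator: for smooth $\phi$, Dynkin's formula gives
\begin{equation*}
\frac{\diff}{\diff t}\E[\phi(x_t,z_t,t)] = \E\Big[\partial_t\phi + \eta_t\langle\nabla_x\phi, w_t - u_t\rangle + \eta_t'\langle\nabla_z\phi, u_t - w_t\rangle + \int_\Xi\big(\phi^{\xi} - \phi\big)\diff\cP(\xi)\Big],
\end{equation*}
where $\phi^\xi$ is $\phi$ evaluated at the post-jump state and the last term encodes the Poisson gradient steps. The two jump computations are the heart of the proof: using \eqref{eq:R_squared},
\begin{equation*}
\int_\Xi\Big(\tfrac12\NRM{u - \gamma_t\langle u,a\rangle a}^2 - \tfrac12\NRM{u}^2\Big)\diff\cP(\xi) = -\gamma_t\NRM{u}_H^2 + \tfrac{\gamma_t^2}{2}\,u^\top\E[\NRM{a}^2 aa^\top]u \leq -\Big(\gamma_t - \tfrac{\gamma_t^2 R^2}{2}\Big)\NRM{u}_H^2 ,
\end{equation*}
which equals $-\tfrac{1}{2R^2}\NRM{u}_H^2$ for $\gamma_t = 1/R^2$, while for the $H^{-1}$-weighted distance of $z$ the identity $HH^{-1}=\Id$ linearizes the first-order term and \eqref{eq:kappa_tilde} controls the second-order one,
\begin{equation*}
\int_\Xi\Big(\tfrac12\NRM{w - \gamma_t'\langle u,a\rangle a}_{H^{-1}}^2 - \tfrac12\NRM{w}_{H^{-1}}^2\Big)\diff\cP(\xi) \leq -\gamma_t'\langle u,w\rangle + \tfrac{\gamma_t'^2\tilde\kappa}{2}\NRM{u}_H^2 .
\end{equation*}
The drift terms are immediate from the mixing ODE.

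For the strongly convex case I would take the constant-coefficient energy $\mathcal{E}_t = \tfrac12\NRM{u_t}^2 + \tfrac{\mu}{2}\NRM{w_t}_{H^{-1}}^2$. Inserting the prescribed parameters $\eta_t=\eta_t'=1/\sqrt{\kappa\tilde\kappa}$, $\gamma_t=1/R^2$, $\gamma_t'=\sqrt{\kappa/\tilde\kappa}/R^2$, the $\NRM{u}_H^2$ produced by the $u$-jump exactly cancels the one produced by the $w$-jump (this is where $\kappa = R^2/\mu$ enters), and the $\langle u,w\rangle$ terms from the $w$-jump and the $x$-mixing drift cancel, leaving
\begin{equation*}
\frac{\diff}{\diff t}\E[\mathcal{E}_t] = -\tfrac{1}{\sqrt{\kappa\tilde\kappa}}\,\E\big[\NRM{u_t}^2 - \mu\langle H^{-1}w_t, u_t\rangle + \mu\NRM{w_t}_{H^{-1}}^2\big].
\end{equation*}
It then suffices to check $\mu\langle H^{-1}w, u\rangle \leq \tfrac12\NRM{u}^2 + \tfrac{\mu}{2}\NRM{w}_{H^{-1}}^2$, which follows from Young's inequality together with $\mu H^{-1}\preccurlyeq\Id$, i.e.\ the strong convexity $H\succeq\mu\Id$. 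Hence $\frac{\diff}{\diff t}\E[\mathcal{E}_t]\leq -\tfrac{1}{\sqrt{\kappa\tilde\kappa}}\E[\mathcal{E}_t]$, and Grönwall's lemma yields the exponential decay; $\tfrac12\NRM{u_t}^2\leq\mathcal{E}_t$ then gives the stated bound on $\tfrac12\E\NRM{x_t-x_*}^2$.

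For the convex case the same scheme works with a time-dependent energy $\mathcal{E}_t = \tfrac{t^2}{2}\NRM{u_t}^2 + c\,\NRM{w_t}_{H^{-1}}^2$, where the constant $c\propto R^2\tilde\kappa$ is tuned so that, with $\eta_t=2/t$, $\eta_t'=0$, $\gamma_t=1/R^2$, $\gamma_t'=t/(2R^2\tilde\kappa)$, the $\langle u,w\rangle$ contribution (from $\partial_t(t^2)$ and the $w$-jump) and the $\NRM{u}_H^2$ contribution (from the two jumps) both vanish, leaving $\frac{\diff}{\diff t}\E[\mathcal{E}_t]\leq 0$. Integrating and discarding the nonnegative $\NRM{w_t}_{H^{-1}}^2$ term gives $\tfrac{t^2}{2}\E\NRM{u_t}^2\leq\mathcal{E}_0 = c\NRM{z_0-x_*}_{H^{-1}}^2$, i.e.\ the advertised $O(1/t^2)$ rate; pinning down the sharp constant requires exploiting the precise (equality) structure of the least-squares jumps rather than only the upper bounds \eqref{eq:R_squared}--\eqref{eq:kappa_tilde}.

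I expect the main obstacle to be precisely this bookkeeping of cancellations: one must verify that, for the a priori mysterious parameter choices, every cross term $\langle u,w\rangle$ and every $\NRM{u}_H^2$ term generated by drift and jumps recombines into a non-positive expression, which is exactly where \eqref{eq:R_squared}, \eqref{eq:kappa_tilde} and the relation $\kappa=R^2/\mu$ are consumed. A secondary technical point is the $t\to0$ singularity of $\eta_t=2/t$ in the convex case, handled by starting the energy argument at some $t_0>0$ and letting $t_0\downarrow 0$, the $t^2$ weight absorbing the singularity.
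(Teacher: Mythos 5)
Your proposal is correct and follows essentially the same route as the paper's proof: the same shifted Lyapunov function (built from $\NRM{x_t-x_*}^2$ and $\NRM{z_t-x_*}^2_{H^{-1}}$ rather than function values), the same jump estimates via \eqref{eq:R_squared} and \eqref{eq:kappa_tilde}, and the same cancellation conditions---your normalized energy plus Gr\"onwall argument is just the paper's supermartingale $\frac{A_t}{2}\NRM{x_t-x_*}^2+\frac{B_t}{2}\NRM{z_t-x_*}^2_{H^{-1}}$ divided through by $A_t$. The factor-of-two slack you flag in the convex-case constant (obtaining $2R^2\Tilde{\kappa}/t^2$ rather than the stated $R^2\Tilde{\kappa}/t^2$) is equally present in the paper's own derivation, so your proof is on par with it.
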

In the strongly convex case, the benefits of this acceleration are similar to those of \citet{jain2018accelerating} with classical discrete iterates: while stochastic gradient descent with stepsize $1/R^2$ is easily shown to achieve an exponential rate of convergence $1/\kappa$, the acceleration enjoys a rate of convergence of $1/\sqrt{\kappa\tilde{\kappa}}$. Note that from the definitions, $\Tilde{\kappa} \leq \kappa$, thus the acceleration performs as least as well as the naive algorithm. However, depending on the distribution of $a$, the improvement might either be significant or null. We refer the reader to the rich discussion in \citet{jain2018accelerating} which provides insights on the interpretation of $\tilde{\kappa}$ and on the possibility to accelerate. 
%Below, we provide a complementary perspective on the statistical condition number by translating it in terms of effective resistances in the case of gossip algorithms. 
Below, we provide a complementary perspective on the statistical condition number in the context of gossip algorithms, where it can be interpreted in terms of effective resistances of graphs.

Albeit more restrictive in terms of assumptions, our analysis is much simpler than that of~\citet{jain2018accelerating}, as it relies on a standard Lyapunov function, similar to that of the continuized acceleration (Theorem \ref{thm:continuized}). In Appendix~\ref{app:coordinate_descent}, we use the same analysis framework to prove convergence of accelerated coordinate descent, which is another noiseless stochastic method.

\section{Accelerating Randomized Gossip\label{sec:gossip}}

The continuized framework allows designing accelerated decentralized algorithms requiring synchronized clocks, but no synchronization of the communications. In this section, we illustrate this statement in the simple case of gossip algorithms; the more general case of decentralized optimization is discussed in the next section.

Let $G=(\cV,\cE)$ a connected graph representing a communication network of agents. Each agent $v\in\cV$ is assigned a real number $x_0(v)\in\R$. The goal of the averaging (or gossip) problem is to design an iterative procedure allowing each agent of the network to know the average $\Bar{x}=\frac{1}{m}\sum_{v\in\cV}x_0(v)$ using only local communications, \emph{i.e.},~communications between adjacent agents in the network. 

We formalize the communication model of randomized gossip \cite{boyd2006randomized}. Time $t$ is indexed continuously in~$\R_{\ge0}$. We generate a Poisson point measure $\diff N(t,e)=\sum_{k\ge 1}\delta_{(T_k,\{v_k,w_k\})}$ with intensity measure $\diff t\otimes \cP$, where $\diff t$ is the Lebesgue measure on $\R_{\ge0}$ and $\cP = (\cP_{\{v,w\}})_{\{v,w\} \in \cE}$ is a probability measure on the set $\cE$ of edges. For $k\ge0$,  $T_k$ is a time at which edge $\{v_k,w_k\}$ is \emph{activated}: adjacent nodes $v_k$ and $w_k$ can communicate and perform a pairwise update. The Poisson point measure assumption implies that edges are activated independently of one another and from the past: the activation times of edge $\{v,w\}$ form a Poisson point process of intensity $\cP_{{\{v,w\}}}$. 

To solve the gossip problem, \citet{boyd2006randomized} proposed the following naive strategy: each agent $v\in\cV$ keeps a local estimate $x_t(v)$ of the average and, upon activation of edge $\{v_k,w_k\}$ at time $T_k\in\R_{\ge0}$, the activated nodes $v_k,w_k$ average their current estimates
\begin{equation*}
    x_{T_k}(v_k), \, x_{T_k}(w_k)\quad\longleftarrow\quad  \frac{x_{T_k-}(v_k)+x_{T_k-}(w_k)}{2} \, .
\end{equation*}
In this section, we accelerate this naive procedure. Our strategy is to apply Section~\ref{sec:stochastic} as follows. Consider the energy function
\begin{equation}
    \label{eq:f_gossip}
    f(x)=\sum_{\{v,w\}\in\cE} \frac{\cP_{\{v,w\}}}{2}(x(v)-x(w))^2 \, , \qquad x = (x(v))_{v\in\cV} \, . 
\end{equation} 
This function is convex, smooth, and writes in the form \eqref{eq:lst-sqr}:
\begin{equation} \label{eq:f_multiplicative_noise}
    f(x)=\E_{\{v,w\}\sim\cP}\left[\frac{1}{2} \left\langle x , a_{\{v,w\}}\right\rangle^2\right],
\end{equation}
where $a_{\{v,w\}}=e_v-e_w$ and $(e_v)_{v\in\cV}$ forms the canonical basis of $\R^{\cV}$. As in Section~\ref{sec:stochastic}, a stochastic gradient of $f$ is obtained by taking the gradient of one realization of the expectation, namely: 
\begin{equation} \label{eq:f_gossip_gradients}
\nabla f(x, {\{v,w\}})= \langle x , a_{\{v,w\}} \rangle a_{\{v,w\}} = 
    \begin{cases}
    x(v)-x(w) &\text{ at coordinate }v,\\
         x(w)-x(v) &\text{ at coordinate }w,\\
         0 &\text{ at all other coordinates}.
    \end{cases}
\end{equation}
As a consequence, a stochastic gradient step with stepsize $1/2$ corresponds to a local averaging alongside edge $\{v,w\}$, where $\{v,w\} \sim \cP$. More generally, the randomized gossip algorithm as described by \citet{boyd2006randomized} is the stochastic gradient descent:
\begin{equation} \label{eq:randomized_gossip_updates}
    \diff x_t = -\frac{1}{2}\int_{\R_{\ge0}\times \cE} \nabla f(x_t,\{v,w\})\diff N(t,\{v,w\}) \, .
\end{equation}
Using Section \ref{sec:stochastic}, we can accelerate this algorithm if we know the strong convexity parameter of $f$ and the constants $R^2$ and $\tilde{\kappa}$ as defined in \eqref{eq:R_squared} and \eqref{eq:kappa_tilde} respectively. These constants can be intepreted as graph-related quantities here.
\begin{definition}[Graph-related quantities]\label{def:graph_quantities}
The Laplacian matrix $\cL\in\R^{\cV\times \cV}$ of graph $G$ with weights $(\cP_{\{v,w\}})_{\{v,w\}\in\cE}$ on the edges is the matrix with entries $\cL_{v,w}=-\cP_{\{v,w\}}$ if $\{v,w\}\in\cE$, $\cL_{v,v}=\sum_{w\sim v}\cP_{\{v,w\}}$, and $\cL_{v,w}=0$ if $\{v,w\}\notin \cE$. We denote $\mu_{\rm gossip}$ the second smallest eigenvalue of $\cL$, corresponding to its smallest positive eigenvalue. For $\{v,w\}\in\cE$, let $R_{\rm eff}(v,w)=(e(v)-e(w))^{\top} \cL^{-1} (e(v)-e(w))$ be the effective resistance of edge $\{v,w\}$, and $R_{\max}=\max_{\{v,w\}\in\cE} R_{\rm eff}(v,w)$ be the maximal resistance in the graph. 
\end{definition}
The function $f$ is quadratic with Hessian $\cL$, and strongly convex with parameter $\mu_{\rm gossip}$ on the hyperplane $F=\{x\in\R^{\cV}:\sum_{v\in\cV}x(v)=\bar{x}\}$; hence we use the (perhaps abusive) notation $\mu_{\rm gossip}$ throughout. Moreover, the conditions \eqref{eq:R_squared} and \eqref{eq:kappa_tilde} are satisfied with $R^2=2$, $\Tilde{\kappa}=R_{\max}$. 

These parameters being given, the accelerated stochastic gradient descent updates~\eqref{eq:continuized-sgd-additive-1}-\eqref{eq:continuized-sgd-additive-2} can be instantiated as follows. Each agent $v\in\cV$ keeps two local estimates $x_t(v),z_t(v)$ of $\Bar{x}$, initialized at $x_0(v)$. Upon activation of edge $\{v_k,w_k\}$ at time $T_k$,
\begin{align*}
    x_{T_k}(v_k)&=x_{T_k}(w_k)=\frac{x_{T_k-}(v_k)+x_{T_k-}(w_k)}{2} \, ,\\
    z_{T_k}(v_k)&=z_{T_k-}(v_k)+\frac{1}{\sqrt{2\mu_{\rm gossip}R_{\max}}}(x_{T_k-}(w_k)-x_{T_k-}(v_k)) \, ,\\
    z_{T_k}(w_k)&=z_{T_k-}(w_k)+\frac{1}{\sqrt{2\mu_{\rm gossip}R_{\max}}}(x_{T_k-}(v_k)-x_{T_k-}(w_k)) \, .
\end{align*}
Between these updates, $x_t(v)$ and $z_t(v)$ locally mix at all nodes $v\in\cV$, according to the coupled ODE:
\begin{align*}
    \diff x_t(v)&= \sqrt{\frac{2\mu_{\rm gossip}}{R_{\max}}}(z_t(v)-x_t(v))\diff t,\\
    \diff z_t(v)&= \sqrt{\frac{2\mu_{\rm gossip}}{R_{\max}}}(x_t(v)-z_t(v))\diff t.
\end{align*}
This algorithm is \emph{asynchronous} in the sense that it does not require global synchronous operations: the mixing of local variables does not require any synchronization since parameter $t\in\R_{\geq0}$ is available at all nodes independently from the number of past updates, while a local pairwise update between adjacent nodes $v$ and $w$ only requires a local synchronization.
\begin{theorem}[Accelerated randomized gossip]\label{thm:gossip} Let $(x_t(v))_{v\in\cV,t\ge0}$ be generated with accelerated randomized gossip. For any $t\in\R_{\ge0}$:
\begin{equation*}
    \sum_{v\in\cV}\frac{1}{2}\esp{\big(x_t(v)-\Bar{x}\big)^2} \leq 2\left(\sum_{v\in\cV}\frac{1}{2}\big(x_0(v)-\Bar{x}\big)^2\right)\exp\left(-\sqrt{\frac{\mu_{\rm gossip}}{2R_{\max}}}t\right).
\end{equation*}
\end{theorem}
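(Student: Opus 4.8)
The plan is to recognize the accelerated randomized gossip iteration as a direct instance of the continuized, pure-multiplicative-noise stochastic gradient method of Section~\ref{sec:stochastic} applied to the energy $f$ of \eqref{eq:f_gossip}, and then to invoke Theorem~\ref{thm:multiplicative-noise}.(2). First I would check that the gossip set-up fits the least-squares template \eqref{eq:lst-sqr}--\eqref{eq:noiseless}: by \eqref{eq:f_multiplicative_noise} we have $f(x)=\E_{\{v,w\}\sim\cP}[\tfrac12\langle x,a_{\{v,w\}}\rangle^2]$ with $a_{\{v,w\}}=e_v-e_w$, the minimizer $x_*=\bar{x}\,\bfone$ is constant so that $\langle x_*,a_{\{v,w\}}\rangle=0$ and the noiseless condition \eqref{eq:noiseless} holds, and $H=\E[aa^\top]=\cL$ by Definition~\ref{def:graph_quantities}.

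Next I would evaluate the two structural constants. Since $\NRM{a_{\{v,w\}}}^2=2$ for every edge, $\esp{\NRM{a}^2aa^\top}=2H$, so \eqref{eq:R_squared} holds with $R^2=2$. For the statistical condition number, $\NRM{a_{\{v,w\}}}_{H^{-1}}^2=(e_v-e_w)^\top\cL^{\dagger}(e_v-e_w)=R_{\rm eff}(v,w)\le R_{\max}$, whence $\esp{\NRM{a}_{H^{-1}}^2aa^\top}\preccurlyeq R_{\max}H$ and \eqref{eq:kappa_tilde} holds with $\tilde{\kappa}=R_{\max}$. With $\mu=\mu_{\rm gossip}$ and $\kappa=R^2/\mu=2/\mu_{\rm gossip}$, substituting these constants into the parameters of Theorem~\ref{thm:multiplicative-noise}.(2) returns precisely the update stepsizes $\gamma_t=\tfrac12$ and $\gamma_t'=1/\sqrt{2\mu_{\rm gossip}R_{\max}}$ used to instantiate the algorithm, while $1/\sqrt{\kappa\tilde{\kappa}}=\sqrt{\mu_{\rm gossip}/(2R_{\max})}$ is the exponent appearing in the claimed rate.

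The one genuine subtlety---and the main obstacle---is that $\cL$ is singular, with kernel spanned by $\bfone$, so $f$ is \emph{not} strongly convex on all of $\R^{\cV}$ and Theorem~\ref{thm:multiplicative-noise} does not apply verbatim. I would resolve this by working on the invariant subspace $\bfone^\perp$. Every stochastic gradient $\nabla f(x,\{v,w\})=\langle x,a_{\{v,w\}}\rangle a_{\{v,w\}}$ lies in $\bfone^\perp$, so gradient steps leave the $\bfone$-component of $x_t$ and $z_t$ unchanged; since the coupled mixing ODE keeps these components equal once they start equal, and $x_0=z_0$ forces them equal at $t=0$, the mean $\tfrac1m\sum_{v\in\cV} x_t(v)$ is conserved and $x_t-x_*,\,z_t-x_*\in\bfone^\perp$ for all $t$. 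On $\bfone^\perp$ the Hessian $\cL$ is positive definite with smallest eigenvalue $\mu_{\rm gossip}$ and $H^{-1}$ is the pseudoinverse $\cL^{\dagger}$, so the Lyapunov argument underlying Theorem~\ref{thm:multiplicative-noise}.(2) carries over unchanged, giving
\begin{equation*}
\tfrac12\E\NRM{x_t-x_*}^2\le\Big(\tfrac12\NRM{x_0-x_*}^2+\tfrac{\mu_{\rm gossip}}{2}\NRM{z_0-x_*}_{H^{-1}}^2\Big)\exp\Big(-\tfrac{t}{\sqrt{\kappa\tilde{\kappa}}}\Big).
\end{equation*}

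Finally I would simplify the right-hand side. Using $z_0=x_0$ together with $x_0-x_*\in\bfone^\perp$, the largest eigenvalue of $\cL^{\dagger}$ on $\bfone^\perp$ is $1/\mu_{\rm gossip}$, so $\mu_{\rm gossip}\NRM{x_0-x_*}_{H^{-1}}^2\le\NRM{x_0-x_*}^2$ and the bracket collapses to $\NRM{x_0-x_*}^2$. Substituting $\sqrt{\kappa\tilde{\kappa}}=\sqrt{2R_{\max}/\mu_{\rm gossip}}$ and rewriting $\NRM{x_t-x_*}^2=\sum_{v\in\cV}(x_t(v)-\bar{x})^2$ yields exactly the stated bound. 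I expect the conservation-of-mean and restriction-to-$\bfone^\perp$ step to be the only part requiring care; the constant evaluations and the closing algebra are routine.
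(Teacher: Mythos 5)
Your proposal is correct and takes essentially the same route as the paper: the paper obtains Theorem~\ref{thm:gossip} precisely by instantiating Theorem~\ref{thm:multiplicative-noise}.(2) on the energy \eqref{eq:f_gossip} with $R^2=2$, $\tilde{\kappa}=R_{\max}$, $\mu=\mu_{\rm gossip}$, the factor $2$ coming from $z_0=x_0$ and $\mu_{\rm gossip}\Vert x_0-x_*\Vert^2_{H^{-1}}\leq \Vert x_0-x_*\Vert^2$. Your explicit restriction to $\bfone^\perp$ (conservation of the mean, pseudo-inverse in place of $H^{-1}$) is exactly the rigorous reading of what the paper itself flags as its ``perhaps abusive'' use of $\mu_{\rm gossip}$ on the hyperplane $F$.
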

Let $\theta_{\rm ARG}=\sqrt{\frac{\mu_{\rm gossip}}{2R_{\max}}}$ be the rate of convergence of accelerated randomized gossip, and $\theta_{\rm RG}=\mu_{\rm gossip}$ be the rate of convergence of randomized gossip \citep{boyd2006randomized}. We have $\theta_{\rm ARG}\geq \theta_{\rm RG}/\sqrt{2}$. Let us exhibit scenarios over which accelerated randomized gossip gains several orders of magnitude. Denoting $\cP_{\min}=\min_{\{v,w\}\in\cE} \cP_{{\{v,w\}}}$, \citet{ELLENS20112491} ensures that for $\{v,w\}\in\cE$, $\cP_{\min}R_{\rm eff}(v,w)\le 1$, so that $R_{\max}\leq \cP_{\min}^{-1}$.
\begin{cor}[Comparison with randomized gossip] \label{cor:accelerated_gossip} Accelerated randomized gossip achieves a rate satisfying:
\begin{equation*}
    \sqrt{\frac{\theta_{\rm RG}\cP_{\min}}{2}}\leq \theta_{\rm ARG}.
\end{equation*}
Assume furthermore that there exist some constants $c>0$ such that for all $\{v,w\}\in\cE$, $\cP_{\{v,w\}}\le c \cP_{\min}$ and $d_v+d_w \le 2d$. Then, with $C=1/\sqrt{2cd}$:
\begin{equation*}
    C\sqrt{\frac{\theta_{\rm RG}}{|\cV|}} \leq \theta_{\rm ARG}.
\end{equation*}
\end{cor}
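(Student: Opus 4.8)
The statement is a pair of lower bounds on the accelerated rate $\theta_{\rm ARG} = \sqrt{\mu_{\rm gossip}/(2R_{\max})}$ expressed in terms of the naive rate $\theta_{\rm RG} = \mu_{\rm gossip}$. Both are essentially algebraic consequences of the definition of $\theta_{\rm ARG}$ combined with an upper bound on $R_{\max}$, so the plan is to chain together the given inequalities and keep careful track of the exponents.

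For the first inequality, I would start from the cited result of \citet{ELLENS20112491} stated just before the corollary, namely $\cP_{\min} R_{\rm eff}(v,w) \le 1$ for every edge $\{v,w\}\in\cE$. Taking the maximum over edges gives $\cP_{\min} R_{\max} \le 1$, i.e. $R_{\max} \le \cP_{\min}^{-1}$, exactly as noted in the text. Substituting this into the definition $\theta_{\rm ARG} = \sqrt{\mu_{\rm gossip}/(2R_{\max})}$ and using $R_{\max}^{-1} \ge \cP_{\min}$ yields
\begin{equation*}
\theta_{\rm ARG} = \sqrt{\frac{\mu_{\rm gossip}}{2R_{\max}}} \ge \sqrt{\frac{\mu_{\rm gossip}\cP_{\min}}{2}} = \sqrt{\frac{\theta_{\rm RG}\cP_{\min}}{2}} \, ,
\end{equation*}
which is the first claim after recalling $\theta_{\rm RG} = \mu_{\rm gossip}$.

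For the second inequality, the additional hypotheses $\cP_{\{v,w\}} \le c\,\cP_{\min}$ and $d_v + d_w \le 2d$ are meant to produce a lower bound on $\cP_{\min}$ in terms of $1/|\cV|$, which I can then plug into the first inequality. The natural route is to sum the constraint $\sum_{\{v,w\}\in\cE}\cP_{\{v,w\}} = 1$ (this holds because $\cP$ is a probability measure on $\cE$) and bound the number of edges $|\cE|$ from above using the degree bound: since $\sum_{v\in\cV} d_v = 2|\cE|$ and the assumption controls degrees, $|\cE|$ is at most of order $d\,|\cV|$. Combining $1 = \sum_{\{v,w\}}\cP_{\{v,w\}} \le |\cE|\, c\,\cP_{\min}$ gives $\cP_{\min} \ge 1/(c|\cE|) \gtrsim 1/(cd|\cV|)$. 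Feeding this into the first inequality and absorbing constants into $C = 1/\sqrt{2cd}$ delivers $\theta_{\rm ARG} \ge C\sqrt{\theta_{\rm RG}/|\cV|}$.

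The only delicate point is establishing the edge count bound $|\cE| \le cd|\cV|$ (or whatever constant makes $C$ come out exactly as $1/\sqrt{2cd}$) from the stated degree hypothesis $d_v + d_w \le 2d$; one must decide whether this is a per-edge bound giving $\max_v d_v \le 2d$ and hence $|\cE| = \tfrac12\sum_v d_v \le d|\cV|$, or whether it should be read more loosely. I expect this bookkeeping of constants to be the main (and essentially only) obstacle, since it determines the precise value of $C$; the rest is a direct substitution.
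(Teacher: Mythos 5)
Your proposal is correct and takes essentially the same approach as the paper, which gives no separate proof of this corollary: the intended argument is exactly the substitution of $R_{\max}\le \cP_{\min}^{-1}$ (from the cited resistance bound) into $\theta_{\rm ARG}=\sqrt{\mu_{\rm gossip}/(2R_{\max})}$, followed by a lower bound on $\cP_{\min}$ via an edge count. The bookkeeping you flagged resolves cleanly: the per-edge hypothesis $d_v+d_w\le 2d$ forces $\max_v d_v \le 2d$ (any incident neighbor contributes at least $1$ to the sum), hence $|\cE|=\tfrac12\sum_v d_v\le d|\cV|$, so $1=\sum_{e\in\cE}\cP_e\le c\,\cP_{\min}|\cE|$ gives $\cP_{\min}\ge 1/(cd|\cV|)$, and plugging this into the first inequality yields precisely $C=1/\sqrt{2cd}$.
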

Assume now for simplicity that the Poisson intensities $\cP_{\edgeuv}$ are all equal to $1/|\cE|$.
Denoting $|\cV|=m$, on the cyclic and the line graph, this gives us $\theta_{\rm ARG} = \Omega(1/m^2)$ while $\theta_{\rm RG}\asymp 1/m^3$. On a $d$-dimensional grid, we have $\theta_{\rm ARG}= \Omega(1/m^{1+1/d})$ and $\theta_{\rm RG}\asymp 1/m^{1+2/d}$. However, on graphs with unbounded degrees, no improvements are observed, as illustrated in Figure~\ref{fig:gossip}, Appendix~\ref{app:simulation_gossip}. In the case of the complete graph, this is expected since at least $\theta_{\rm RG}^{-1}\asymp m$ communications are needed to compute the average.
%For more general graphs with unbounded degrees but smaller eigengap $\lambda$, we observe acceleration in practice but would need a finer characterization of $R_{\max}$ to quantify it theoretically. \raphael{y a-t-il une simu correspondante?} \mathieu{je dois avoir ça en stock, mais sinon je peux la lancer facilement} \hadrien{Ca pourrait être marrant d'en faire une pour un graphe avec des degrés unbounded histoire d'illustrer ça, mais c'est pas nécessaire non plus (et sinon virer peut etre la phrase qui y fait référence)}. 
We thus recover the same rates as \citet{Dimakis_2008} for the graphs they study, but generalized to any network.

%\begin{remark}[Decentralized Convex Optimization]This section focuses on showing that acceleration can be achieved without synchronization for the randomized gossip protocol that solves the averaging problem. The same results can be obtain for more general problems, in which the nodes collaboratively minimize a sum of functions. In this case, one can follow the approach from~\citet{hendrikx2018accelerated} and simply apply the continuized version of accelerated coordinate gradient descent.  \end{remark}

\section{Accelerating Asynchronous Decentralized Optimization}
\label{sec:decentralized_opt}
Our continuized framework for accelerating randomized gossip can be extended to the more general problem of decentralized optimization: each node $v$ in the network $G$ previously defined holds a function $f_v:\R^d\to\R$, $\mu$-strongly convex and $L$-smooth. Nodes of the network collaborate to solve:
\begin{equation}\label{eq:pbm_optim_decentralisé}
    \min_{x\in\R^d} \left\{ f(x)=\frac{1}{|\cV|}\sum_{v\in\cV} f_v(x)\right\}.
\end{equation}
As in gossip averaging, only local communications are allowed. Quantities related to $f_v$ can only be computed at node $v$. In the case of empirical risk minimization, $f_v$ represents the empirical risk related to node $v$'s local data. Setting $f_v(x)=\frac{1}{2}\NRM{x-x_0(v)}^2$ leads to the averaging problem previously described. 
Similarly to Section \ref{sec:gossip}, time is indexed continuously by $t$ in $\R_{\geq0}$, and communications are ruled by the same Poisson point measure $\diff N(t,e)=\sum_{k\geq1}\delta_{(T_k,\{v_k,w_k\})}$ on $\R_{\geq0}\times \cE$. 
%\hadrien{Est-ce qu'on a vraiment besoin de le redire?} 
Yet, we no longer assume (as in Theorem~\ref{thm:multiplicative-noise}) that the function $f$ is quadratic. Instead, we write a dual formulation of Problem~\eqref{eq:pbm_optim_decentralisé} and minimize it using a continuized version of accelerated coordinate descent~\citep{neststich2017acdm} that we present in Appendix~\ref{app:coordinate_descent}. This leads to an accelerated decentralized algorithm to solve \eqref{eq:pbm_optim_decentralisé}. Our algorithm mimics the behavior of accelerated randomized gossip: a node possesses two local parameters that mix continuously through a time-independent ODE. At time $T_k$, adjacent nodes $v_k$ and $w_k$ use their local function in order to compute gradient conjugates $\nabla f^*_v(x(v)),\nabla f^*_w(x(w))$. Since the local functions are not simple quadratics anymore, the stochastic gradients $\nabla f(x, \edgeuv)$ from Equation~\eqref{eq:f_gossip_gradients} are replaced by terms proportional to:
\begin{equation} \label{eq:f_gossip_gradients}
G(y, \edgeuv) = 
    \begin{cases}
    \nabla f^*_v(y(v))-\nabla f^*_w(y(w)) &\text{ at coordinate }v,\\
         -\nabla f^*_v(y(v))-\nabla f^*_w(y(w)) &\text{ at coordinate }w,\\
         0 &\text{ at all other coordinates}.
    \end{cases}
\end{equation}
Due to lack of space, we describe the iterations more in details in Appendix~\ref{app:decentralized_optim}, together with a relevant choice of parameters. The crucial point is that, similarly to the gossip averaging case, we do not require nodes to be aware of a global iteration counter. Yet, we still obtain the same convergence rate as \citep{hendrikx2018accelerated}, as provided by the following theorem. The same approach can be used to ``continuize'' other accelerated randomized gossip algorithms for decentralized optimization, such as ADFS~\citep{hendrikx2019accelerated}. 
\begin{theorem}[Accelerated asynchronous decentralized optimization] \label{thm:acc_asy_decentralized_optim}
For $(x_t(v))_{v\in\cV} = (\nabla f_v^*(z_t(v)))_{v\in\cV}$ generated by the accelerated coordinate descent on the dual of Problem~\eqref{eq:pbm_optim_decentralisé}: %\mathieu{bien tout définir, etc.}
\begin{equation*}
     \sum_{v\in\cV}\frac{1}{2}\esp{\NRM{x_t(v)-x_*}^2} \leq C\left(\sum_{v\in\cV}\frac{1}{2}\NRM{x_0(v)-x_*}^2\right)\exp\left(- \frac{\theta'_{\rm ARG}}{\sqrt{\kappa}}t\right)\,,
\end{equation*}
where $\kappa=\mu/L$ is an upper bound on the condition number of $f$, $C$ is a constant that depends on the graph and $\kappa$, and $\theta'_{\rm ARG}$ is the rate of convergence of accelerated randomized gossip on the graph $G$ as defined in Theorem \ref{thm:gossip} but with graph resistances are defined in a different way (see Theorem \ref{thm:decentralized_optim_appendix}). 
%\hadrien{Pas exactement le meme en fait j'ai l'impression (cf notre discussion Mathieu).}
\end{theorem}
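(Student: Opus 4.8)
The plan is to realize Problem~\eqref{eq:pbm_optim_decentralisé} as the primal of a suitably constrained problem, pass to its dual, and then recognize that activating a single edge of $G$ amounts to one coordinate step on that dual; the accelerated rate will then follow by invoking the continuized accelerated coordinate descent developed in Appendix~\ref{app:coordinate_descent}. Concretely, I would first lift the problem by giving each node $v$ its own copy $\theta(v)\in\R^d$ and rewriting \eqref{eq:pbm_optim_decentralisé} as the minimization of $\sum_{v\in\cV} f_v(\theta(v))$ subject to the consensus constraint $A^{\top}\theta = 0$, where $A\in\R^{\cV\times\cE}$ is a weighted incidence matrix of $G$, chosen so that $AA^{\top}$ is a weighted Laplacian and $\ker A^{\top}$ is exactly the consensus space. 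Forming the Lagrangian and minimizing over $\theta$ converts the problem into the dual $\min_{\lambda\in\R^{\cE}} \sum_{v\in\cV} f_v^{*}\big(-(A\lambda)(v)\big)$, whose variable $\lambda$ is indexed by the edges of $G$.

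The second step is to read off the coordinate structure of this dual. The partial derivative of the dual objective with respect to the coordinate of edge $\{v,w\}$ involves only $\nabla f_v^{*}$ and $\nabla f_w^{*}$ evaluated at the two endpoints, which is precisely the quantity $G(y,\{v,w\})$ of \eqref{eq:f_gossip_gradients}; hence a coordinate step on edge $\{v,w\}$ requires only a local computation and a single communication across that edge. Feeding the Poisson activation measure $\diff N(t,e)$ into the continuized accelerated coordinate descent then produces exactly the stated asynchronous algorithm with the recovery $x_t(v)=\nabla f_v^{*}(z_t(v))$, whose convergence is governed by the generic guarantee of Appendix~\ref{app:coordinate_descent}. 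To instantiate that guarantee I would bound the coordinate-wise smoothness of the dual: since $f_v$ is $\mu$-strongly convex and $L$-smooth, each $f_v^{*}$ is $1/\mu$-smooth and $1/L$-strongly convex, so the relevant coordinate Lipschitz constants combine these scalar bounds with the edge weights carried by $A$, while the strong convexity of the dual on the orthogonal of the consensus space is controlled by a spectral quantity analogous to $\mu_{\rm gossip}$. Tracking these two constants yields an exponential rate of the form $\exp(-\theta'_{\rm ARG}t/\sqrt{\kappa})$ with $\kappa=\mu/L$ and $\theta'_{\rm ARG}$ the gossip rate for the resistances induced by $A$.

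Finally, I would convert the dual convergence into the claimed primal bound. At optimality the dual variable $z_*$ satisfies $\nabla f_v^{*}(z_*(v))=x_*$ for every $v$, so $x_t(v)-x_* = \nabla f_v^{*}(z_t(v)) - \nabla f_v^{*}(z_*(v))$; using that $\nabla f_v^{*}$ is $1/\mu$-Lipschitz lets me upper bound $\sum_{v\in\cV} \frac{1}{2}\NRM{x_t(v)-x_*}^2$ by a multiple of the dual suboptimality measured by the Lyapunov function of the accelerated coordinate descent, and the accumulated prefactor becomes the graph- and $\kappa$-dependent constant $C$. The randomness enters only through the Poisson activation measure, so the expectation $\E$ in the statement is handled exactly as in the proof of the continuized accelerated coordinate descent.

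The main obstacle I anticipate is twofold. First, I must pin down the exact coordinate smoothness constants of the dual so that the spectral factor appearing in the rate is genuinely the gossip rate $\theta'_{\rm ARG}$, with resistances defined through $A$ rather than through $\cL$ as in Theorem~\ref{thm:gossip} (this is the reason the resistances in Theorem~\ref{thm:decentralized_optim_appendix} differ). Second, I must carefully control the change of distance measure when passing from the dual iterate $z_t$ to the primal iterate $x_t=\nabla f^*(z_t)$, since the conjugate gradient map distorts distances by factors lying between $1/L$ and $1/\mu$; keeping these factors from degrading the rate, and only letting them inflate the constant $C$, is the delicate bookkeeping step.
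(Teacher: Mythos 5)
Your proposal follows essentially the same route as the paper's proof: lifting to the consensus-constrained problem with a weighted incidence matrix $A$ satisfying $AA^{\top}=\cL$, passing to the dual $\max_{\lambda}-F^*(A\lambda)$ indexed by edges, applying the continuized accelerated coordinate descent of Appendix~\ref{app:coordinate_descent} with Poisson edge activations as coordinate steps, computing the dual coordinate smoothness ($1/\mu$ times edge-weight factors, yielding $L_{\rm dual}$ and the modified resistances) and the dual strong convexity $\mu_{\rm gossip}/L$ on the orthogonal of the consensus space, and finally converting the dual guarantee to the primal bound via the $1/\mu$-Lipschitzness of $\nabla f_v^*$, with the distortion absorbed into the constant $C$. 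Both the structure and the anticipated delicate points (the definition of the resistances through $A$, and the dual-to-primal distance conversion inflating only $C$) match the paper's argument in Appendix~\ref{app:decentralized_optim} and Theorem~\ref{thm:decentralized_optim_appendix}.
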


\section{Conclusion}
In this work, we introduced a continuized version of Nesterov's accelerated gradients. In a nutshell, the method has two sequences of iterates which take gradient steps at random times. In between gradient steps, the two sequences mix following a simple ordinary differential equation, whose parameters are picked to ensure good convergence properties of the method.

As compared to other continuous time models of Nesterov acceleration, a key feature of this approach is that the method can be implemented without any approximation, as the differential equation governing the mixing procedure has a simple analytical solution. A discretization of the continuized method corresponds to an accelerated gradient method with random parameters.  

Continuization strategies were introduced in the context of Markov chains~\citep{aldous1995reversible}. Here, they allow using acceleration mechanisms in asynchronous distributed optimization, where usually agents are not aware of the total number of iterations taken so far. This is showcased in the context of asynchronous gossip algorithms.

\textbf{Acknowledgements:} The authors thank Sam Power for pointing out the class of piecewise deterministic Markov processes and related references, and an anonymous reviewer for suggesting Remark \ref{rmk:geometric}. This work was funded in part by the French government under management of Agence Nationale de la Recherche as part of the “Investissements d’avenir” program, reference ANR-19-P3IA-0001(PRAIRIE 3IA Institute). We also acknowledge support from the European Research Council (grant SEQUOIA 724063), from the DGA, and from the MSR-INRIA joint centre.

\bibliographystyle{plainnat} %plainnat après mais à changer !!!!
\bibliography{bibliography}

\appendix

\newpage

\section{Numerical Simulations}

\subsection{Simulations of the discretized continuized acceleration}
\label{ap:simus-cont}

\begin{figure}[h]
	\includegraphics[width=0.49\linewidth]{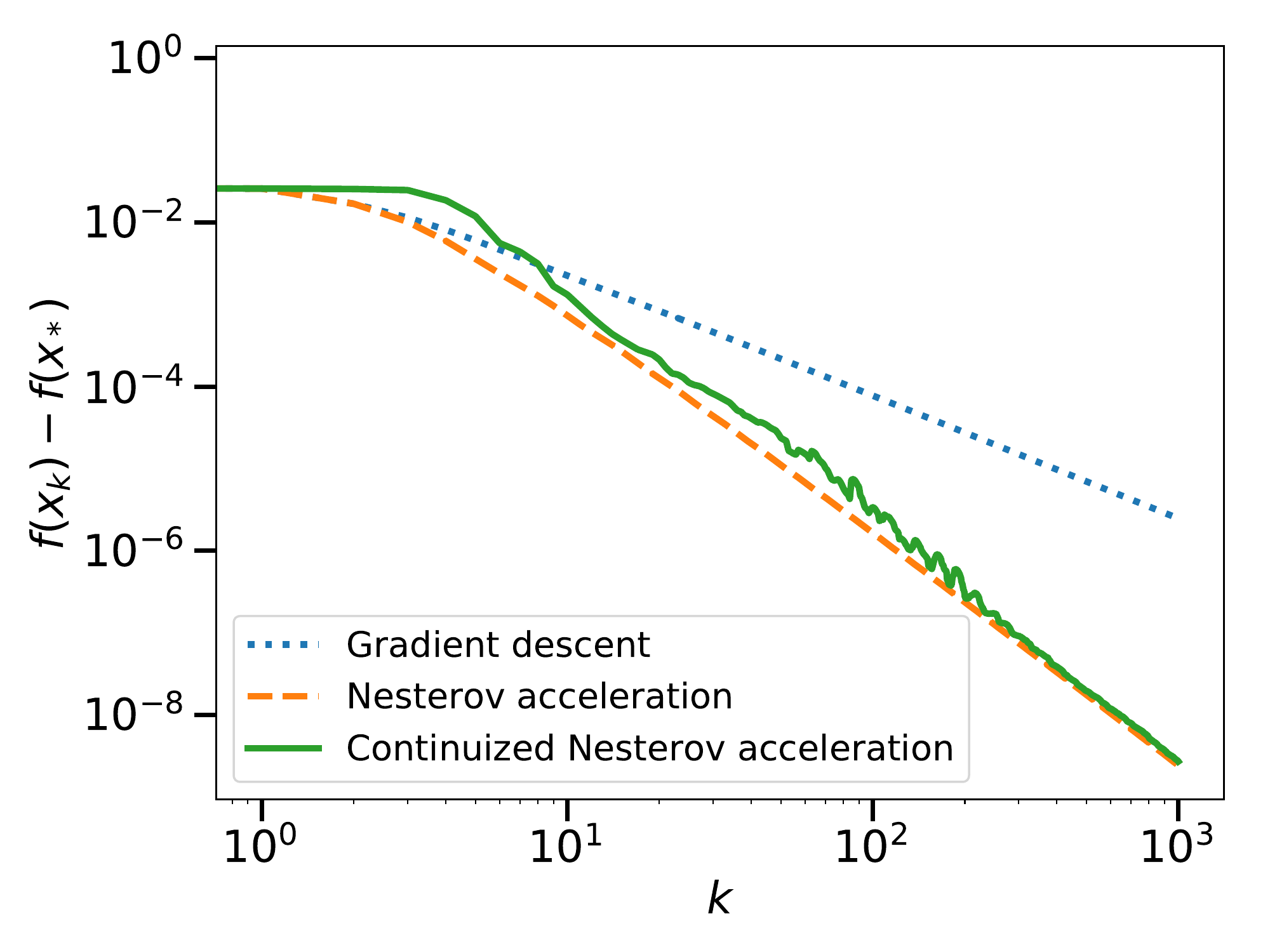}
	\includegraphics[width=0.49\linewidth]{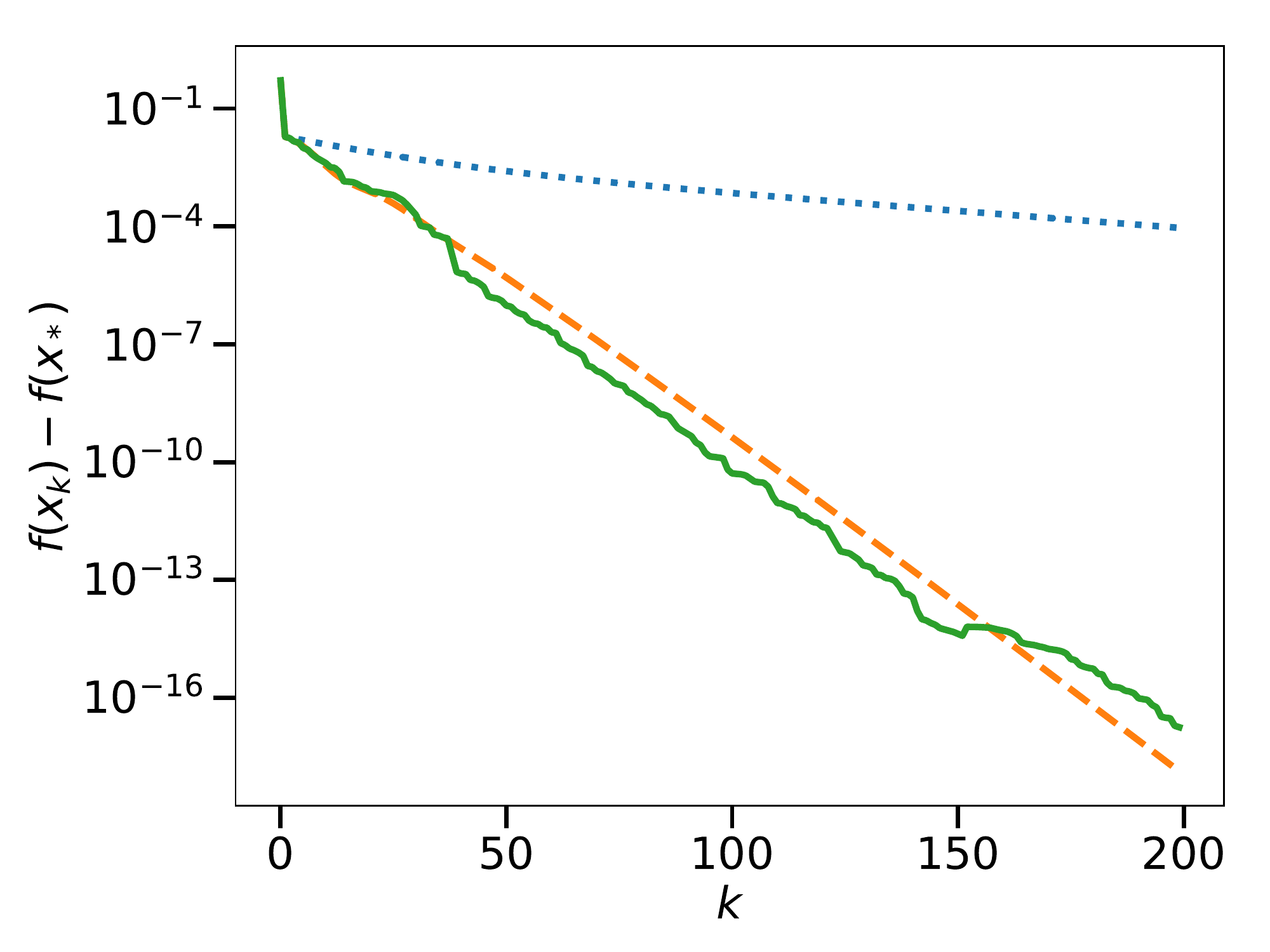}
	\includegraphics[width=0.49\linewidth]{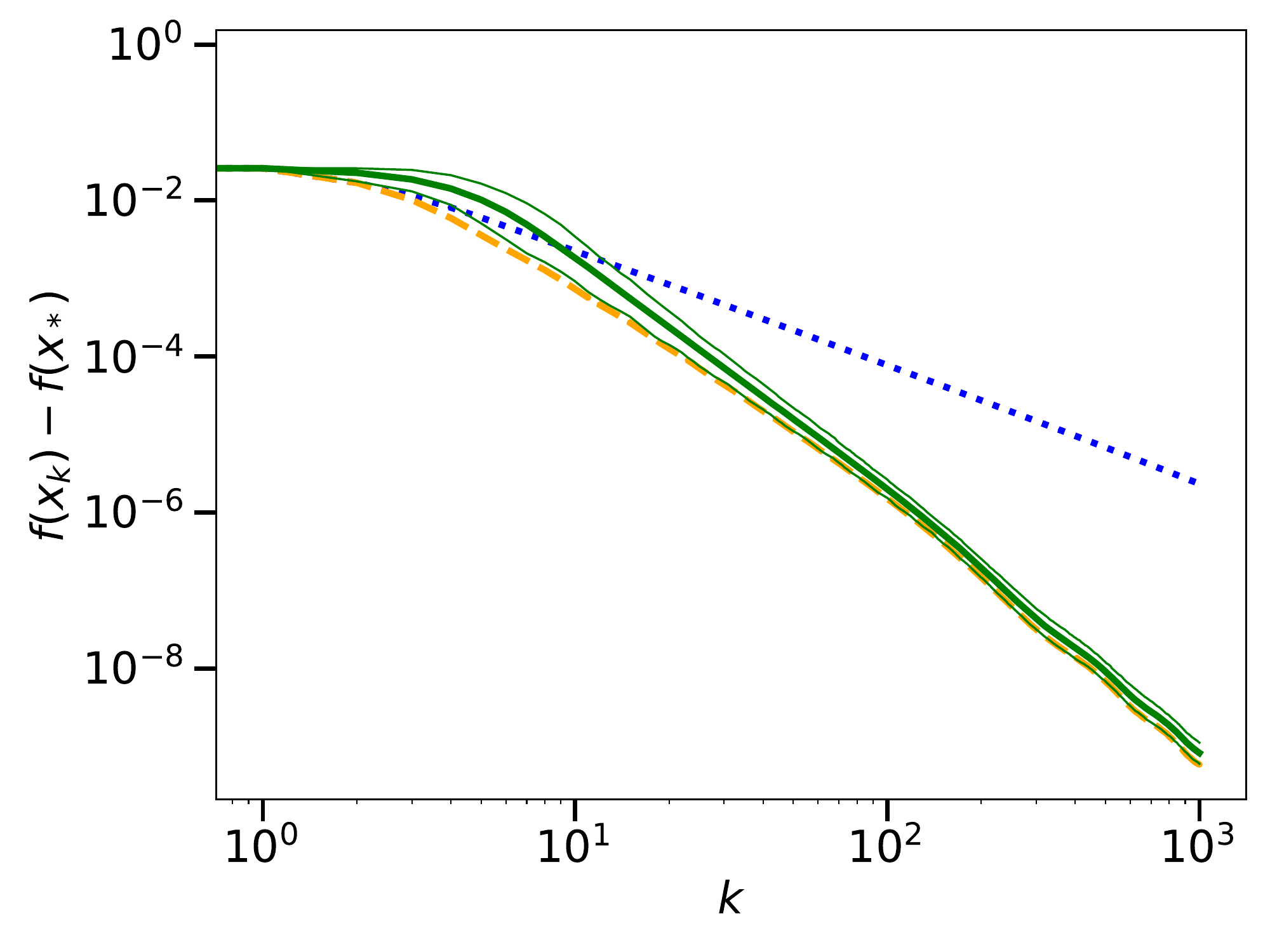}
\includegraphics[width=0.49\linewidth]{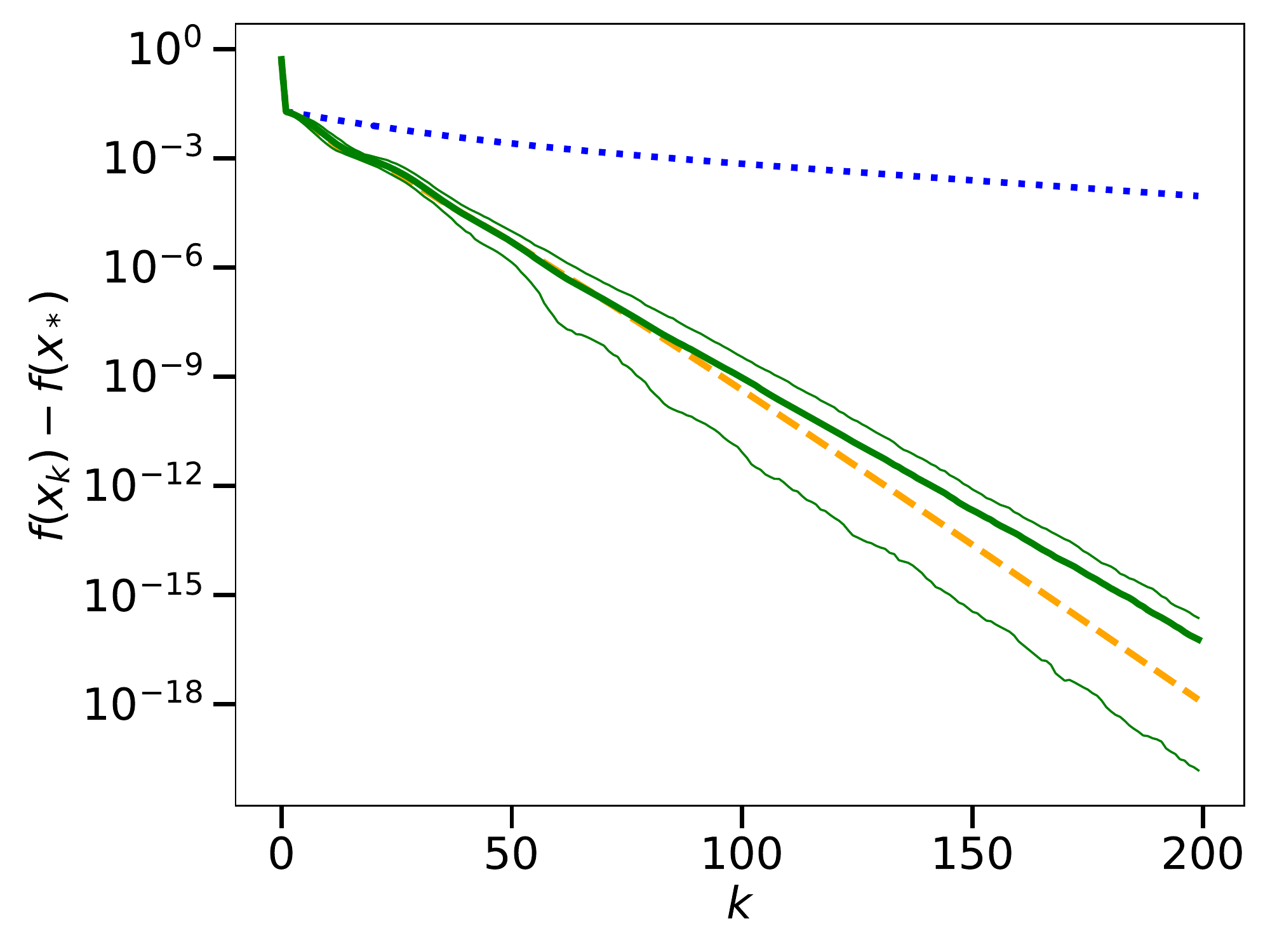}
	\caption{Comparison between gradient descent, Nesterov acceleration, and the continuized version of Nesterov acceleration, on a convex function (left plots) and a strongly convex function (right plots). For the continuized acceleration, which is randomized, the results shown in the above plots correspond to a single run. In the plots below, the thick line represents the average performance over $N = 1000$ runs of the continuized acceleration, while the thin lines represent the $5\%$ and $95\%$ quantiles.}
	\label{fig:comparison}
\end{figure}

%\begin{figure}
%	\begin{subfigure}{0.5\linewidth}
%		\includegraphics[width=\linewidth]{continuized_nest_cvx}
%		\vspace{-0cm}
%		\caption{Convex case}
%		\label{fig:continuized-nest-cvx}
%	\end{subfigure}
%	\begin{subfigure}{0.5\linewidth}
%		\includegraphics[width=\linewidth]{continuized_nest_str_cvx}
%		\vspace{-0cm}
%		\caption{Strongly convex case}
%		\label{fig:continuized-nest-str-cvx}
%	\end{subfigure}
%	\caption{blabla}
%	\label{fig:continuized-nest}
%\end{figure}

In Figure \ref{fig:comparison}, we compare this continuized Nesterov acceleration \eqref{eq:discretization-1}-\eqref{eq:discretization-3} with the classical Nesterov acceleration \eqref{eq:nest-1}-\eqref{eq:nest-3} and gradient descent. In the strongly convex case (right), we run the algorithms with the parameters of Theorem \ref{thm:nesterov}.(\ref{it:nest-str-cvx}) and \ref{thm:discretization}.(\ref{it:str-cvx}) on the function 
\begin{equation*}
f(x_1, x_2, x_3) = \frac{\mu}{2}(x_1-1)^2 + \frac{3\mu}{2}(x_2-1)^2 + \frac{L}{2}(x_3-1)^2 \, , 
\end{equation*} with $\mu = 10^{-2}$ and $L = 1$. In the convex case, we run the algorithms with the parameters of Theorem \ref{thm:nesterov}.(\ref{it:nest-cvx}) and \ref{thm:discretization}.(\ref{it:cvx}) on the function 
\begin{equation*}
f(x_1, \dots, x_{100})= \frac{1}{2}\sum_{i = 1}^{100} \frac{1}{i^2}\left(x_i - \frac{1}{i}\right)^2 \, , 
\end{equation*}
which has negligible strong convexity parameter. All iterations were initialized from $x_0 = z_0 = 0$.  
\clearpage
\subsection{Simulation of Accelerated Randomized Gossip\label{app:simulation_gossip}}

\begin{figure}[h]
\centering
\subfigure[Line graph, 30 nodes]{
    \includegraphics[scale=0.45]{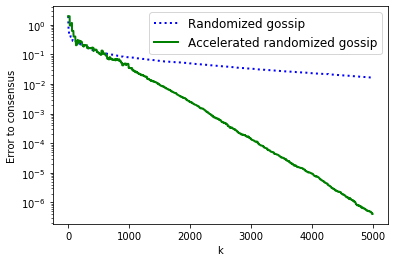}
    \label{fig:line1}
}
\subfigure[Line graph, 30 nodes]{
    \includegraphics[scale=0.45]{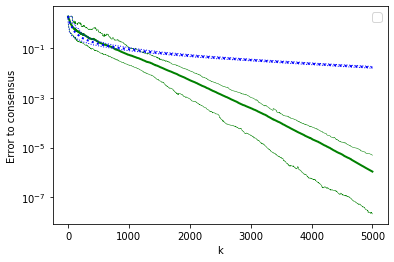}
    %\label{fig:sharpB}
}
\subfigure[2D-Grid, 225 nodes]{
    \includegraphics[scale=0.45]{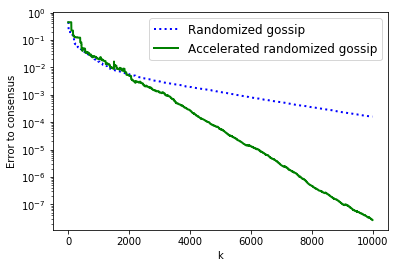}
    \label{fig:grid1}
}
\subfigure[2D-Grid, 225 nodes]{
    \includegraphics[scale=0.45]{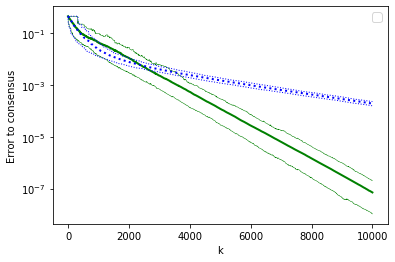}
    %\label{fig:sharpB}
}
\subfigure[Complete graph, 10 nodes]{
    \includegraphics[scale=0.45]{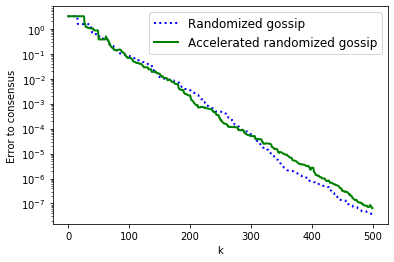}
    %\label{fig:sharpA}
}
\subfigure[Complete graph, 10 nodes]{
    \includegraphics[scale=0.45]{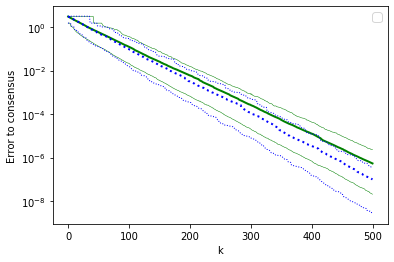}
    %\label{fig:sharpB}
}
%\vspace{-8pt}
\caption{Comparison between randomized gossip \citep{boyd2006randomized} and accelerated randomized gossip from Section \ref{sec:gossip}, on 3 different graphs: line with 30 nodes, 2D-Grid with 225 nodes and complete graph with 30 nodes. The probability $\cP$ on the set of edges that determines at every activation which edge is activated is uniform in all cases. Parameters of the algorithm are taken as in Theorem \ref{thm:gossip}. In all simulations, initialization was taken with a vector $x_0$ such that $x_0(v)=0$ at all nodes, except one where $x_0(v)=1$. Figures on the left represent one run of the algorithms. Figures on the right represent the average performance (thick line) for $N=1000$ runes with the same settings, and the $5\%$ and $95\%$ quantiles (thin lines). As expected, we observe acceleration one the line and the grid, but no such phenomenon on the complete graph. }\label{fig:gossip}
\end{figure}

\section{Robustness of the continuized Nesterov acceleration to additive noise}
\label{ap:additive}

In this section, we study the continuized acceleration \eqref{eq:continuized-sgd-additive-1}-\eqref{eq:continuized-sgd-additive-2} under stochastic gradients. We assume that our gradient estimates are unbiased, i.e.,
\begin{equation}
\label{eq:unbiased}
\forall x \in \R^d \, , \qquad \E_\xi \nabla f(x, \xi) = \nabla f(x) \, ,
\end{equation}
and has a uniformly bounded variance, i.e., there exists $\sigma^2 \geq 0$ such that 
\begin{equation}
\label{eq:bounded-variance}
\forall x \in \R^d \, , \qquad \E_\xi \left\Vert \nabla f(x, \xi) - \nabla f(x) \right\Vert^2 \leq \sigma^2 \, .
\end{equation}
These assumptions typically hold in the additive noise model, where $\nabla f(x,\xi) = \nabla f(x) + \xi$, and $\xi \in \R^d$ satisfies $\E\xi = 0$, $\E \Vert \xi \Vert^2 \leq \sigma^2$. By an abuse of terminology, we say that our stochastic gradients have ``additive noise'' when \eqref{eq:unbiased} and \eqref{eq:bounded-variance} hold. 

We should emphasize that similar studies of Nesterov acceleration under additive noise has been done \citep{lan2012optimal,hu2009accelerated,xia2010dual,devolder2011stochastic,cohen2018acceleration,aybat2020robust}. 

\begin{theorem}[Continuized acceleration with additive noise]
	\label{thm:additive-noise}
	Assume that the stochastic gradients are unbiased \eqref{eq:unbiased} and have a variance uniformly bounded by $\sigma^2$ \eqref{eq:bounded-variance}. Then the continuized acceleration \eqref{eq:continuized-sgd-additive-1}-\eqref{eq:continuized-sgd-additive-2} satisfies the following. 
		\begin{enumerate}
		\item For the parameters of Theorem \ref{thm:continuized}.(\ref{it:cvx}),
		\begin{align*}
\E f(x_t) - f(x_*) \leq \frac{2L\Vert z_0 -x_* \Vert^2}{t^2} + \sigma^2 \frac{t}{3L} \, .
\end{align*}
		\item Assume further that $f$ is $\mu$-strongly convex, $\mu > 0$. For the parameters of Theorem \ref{thm:continuized}.(\ref{it:str-cvx}), 
		\begin{align*}
		\E f(x_t) - f(x_*) \leq \left(f(x_0) - f(x_*) + \frac{\mu}{2} \Vert z_0 - x_* \Vert^2\right) \exp \left(-\sqrt{\frac{\mu}{L}}t\right) + \sigma^2 \frac{1}{\sqrt{\mu L}} \, .
		\end{align*}	
	\end{enumerate}
\end{theorem}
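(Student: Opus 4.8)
The plan is to reuse the Lyapunov functions underlying the noiseless Theorem~\ref{thm:continuized} and to track exactly where the additive noise perturbs the computation. For the convex case I would take $\phi_t = \frac{t^2}{4L}\big(f(x_t) - f(x_*)\big) + \frac{1}{2}\Vert z_t - x_* \Vert^2$, and for the strongly convex case $\phi_t = \exp\big(\sqrt{\mu/L}\,t\big)\big(f(x_t) - f(x_*) + \frac{\mu}{2}\Vert z_t - x_* \Vert^2\big)$; these are precisely the certificates that force $\frac{\diff}{\diff t}\E\phi_t \leq 0$ in the noiseless setting with the stated parameters. The whole strategy rests on the observation that the noise enters in only one controllable place, so the noiseless estimate survives up to an explicit additive term.

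First I would write the Dynkin/It\^o formula for the piecewise-deterministic process \eqref{eq:continuized-sgd-additive-1}-\eqref{eq:continuized-sgd-additive-2}, giving $\frac{\diff}{\diff t}\E\phi_t = \E[\mathcal{A}\phi_t]$, where the generator $\mathcal{A}$ splits into a drift part (the mixing ODE governed by $\eta_t,\eta_t'$) and a jump part of rate one. Crucially, the drift part does not see the gradient at all, so it reproduces verbatim the noiseless computation and needs no rework. The jump part replaces, in expectation against $\cP$, a step $x \mapsto x - \gamma_t \nabla f(x,\xi)$, $z \mapsto z - \gamma_t' \nabla f(x,\xi)$.

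The key step is to expand the jump contribution and separate linear-in-gradient from quadratic-in-gradient terms. The linear terms carry a single factor $\nabla f(x_t,\xi)$; taking $\E_\xi$ and invoking unbiasedness \eqref{eq:unbiased} turns them into the exact true-gradient terms of the noiseless proof. The quadratic terms arise from applying smoothness to $f\big(x_t - \gamma_t \nabla f(x_t,\xi)\big)$ and from expanding $\Vert z_t - \gamma_t' \nabla f(x_t,\xi) - x_* \Vert^2$; here I would use the variance decomposition $\E_\xi \Vert \nabla f(x_t,\xi)\Vert^2 = \Vert \nabla f(x_t)\Vert^2 + \E_\xi \Vert \nabla f(x_t,\xi) - \nabla f(x_t)\Vert^2 \leq \Vert \nabla f(x_t)\Vert^2 + \sigma^2$ from \eqref{eq:bounded-variance}. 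The $\Vert \nabla f(x_t)\Vert^2$ part recombines with the drift and linear terms to restore the $\leq 0$ certificate, and the residue is a nonnegative multiple of $\sigma^2$. A direct accounting with the stated parameters yields $\frac{\diff}{\diff t}\E\phi_t \leq \frac{t^2 \sigma^2}{4L^2}$ in the convex case and $\frac{\diff}{\diff t}\E\phi_t \leq \frac{\sigma^2}{L}\exp\big(\sqrt{\mu/L}\,t\big)$ in the strongly convex case.

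Finally I would integrate these scalar differential inequalities from $0$ to $t$, lower-bound $\E\phi_t$ by its leading term (dropping the nonnegative squared-norm contribution, so that $\E\phi_t \geq \frac{t^2}{4L}\E\big(f(x_t) - f(x_*)\big)$ and $\E\phi_t \geq \exp\big(\sqrt{\mu/L}\,t\big)\E\big(f(x_t) - f(x_*)\big)$ respectively), and divide by the time-dependent weight. In the convex case $\int_0^t \frac{s^2 \sigma^2}{4L^2}\diff s = \frac{\sigma^2 t^3}{12 L^2}$, and dividing by $t^2/(4L)$ produces exactly the extra $\sigma^2 t/(3L)$; in the strongly convex case $\int_0^t \frac{\sigma^2}{L} e^{\sqrt{\mu/L}\,s}\diff s \leq \frac{\sigma^2}{\sqrt{\mu L}}e^{\sqrt{\mu/L}\,t}$, and dividing by $e^{\sqrt{\mu/L}\,t}$ produces the extra $\sigma^2/\sqrt{\mu L}$. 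The main obstacle I anticipate is not the algebra but justifying the Dynkin step rigorously: one must verify that the stochastic integral against the compensated Poisson measure is a genuine martingale rather than merely a local one, so that its expectation vanishes, which requires mild integrability of $\nabla f(x_t,\xi)$ along trajectories; this is exactly the bookkeeping I would defer to the process toolbox of Appendix~\ref{ap:toolbox}.
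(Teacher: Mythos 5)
Your proposal is correct and follows essentially the same route as the paper's proof in Appendix~\ref{ap:proof-additive}: the same Lyapunov functions (your $\phi_t$ are exactly the paper's $A_t(f(x_t)-f(x_*))+\frac{B_t}{2}\Vert z_t-x_*\Vert^2$ with the noiseless choices of $A_t,B_t$ plugged in), the same generator/Dynkin decomposition via Proposition~\ref{prop:ito}, the same use of unbiasedness \eqref{eq:unbiased} for the linear terms and of the variance bound \eqref{eq:bounded-variance} for the quadratic terms, and the same final integration—your intermediate bounds $\frac{t^2\sigma^2}{4L^2}$ and $\frac{\sigma^2}{L}e^{\sqrt{\mu/L}\,t}$ coincide with the paper's $I_t\leq\sigma^2 A_t/L$. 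The only cosmetic difference is that you specialize $A_t,B_t$ from the outset while the paper carries them symbolically before substituting; the martingale integrability point you flag is exactly what the paper delegates to its stochastic calculus toolbox.
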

This theorem is proved in Appendix \ref{ap:proof-additive}.

In the above bounds, $L$ is a parameter of the algorithm, that can be taken greater than the best known smoothness constant of the function $f$. Increasing $L$ reduces the stepsizes of the algorithm and performs some variance reduction. If the bound $\sigma^2$ on the variance is known, one can choose $L$ optimizing the above bounds in order to obtain algorithms that adapt to additive noise. 

\begin{figure}
	\includegraphics[width=0.49\linewidth]{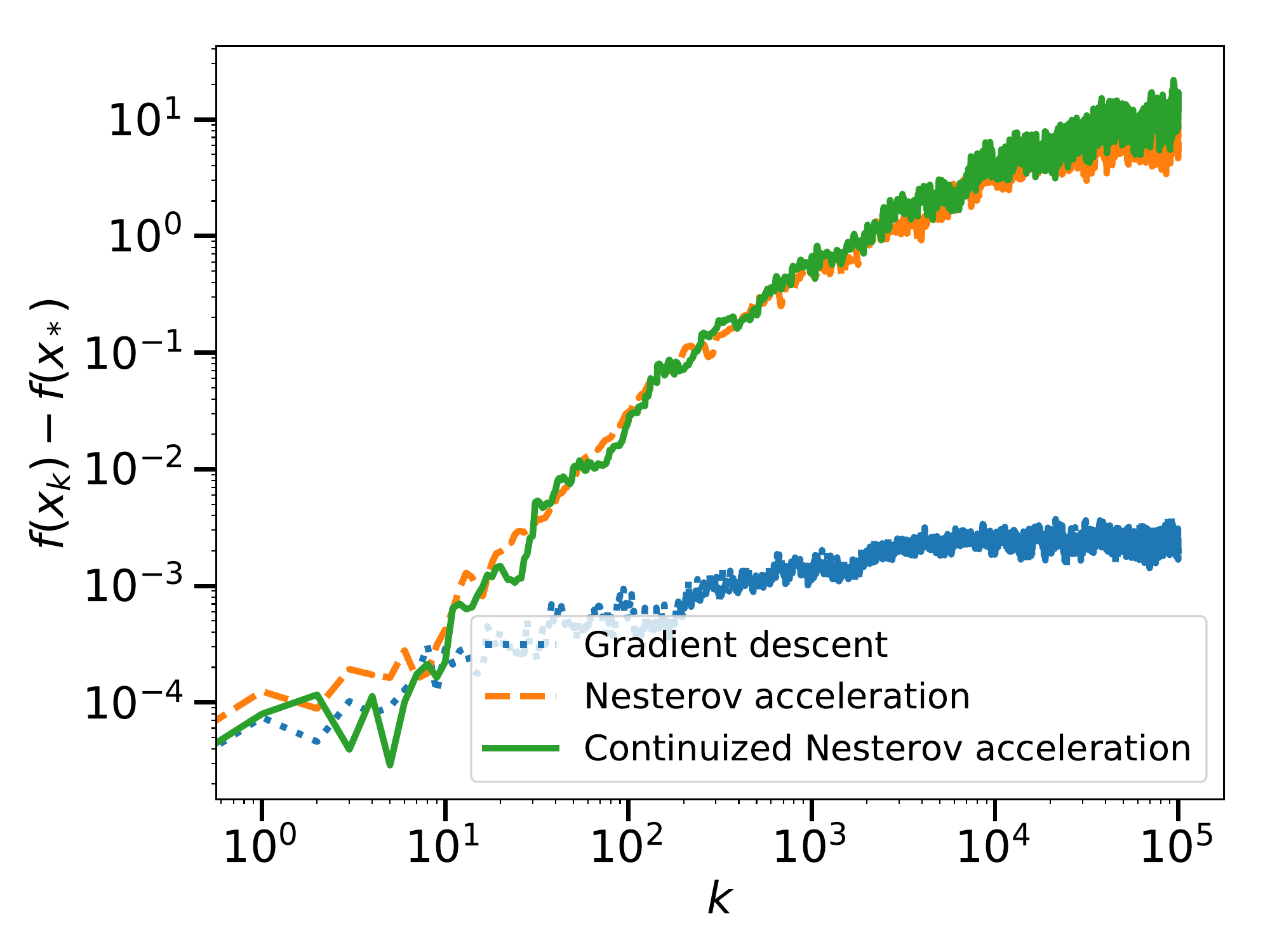}
	\includegraphics[width=0.49\linewidth]{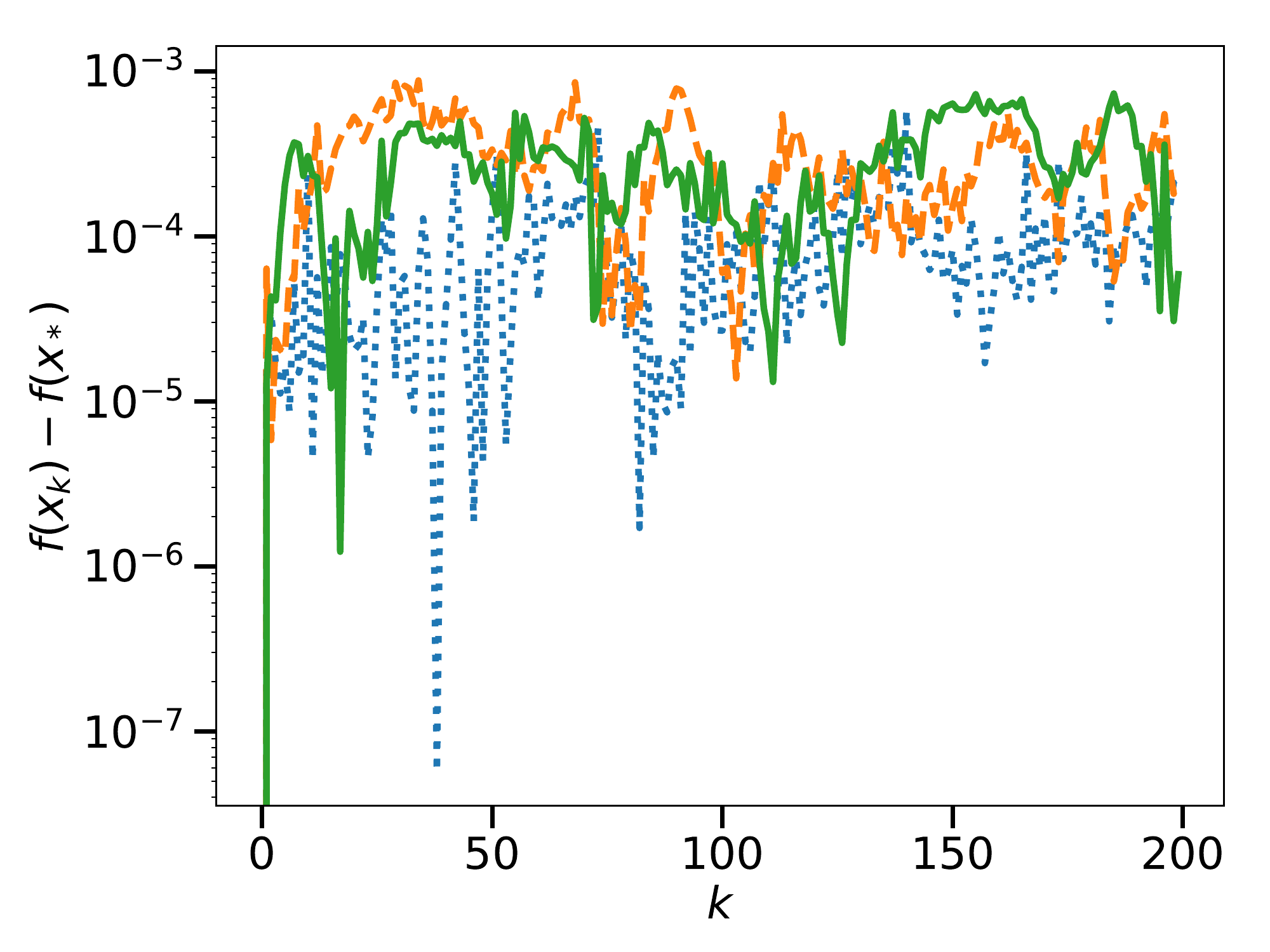}
	\includegraphics[width=0.49\linewidth]{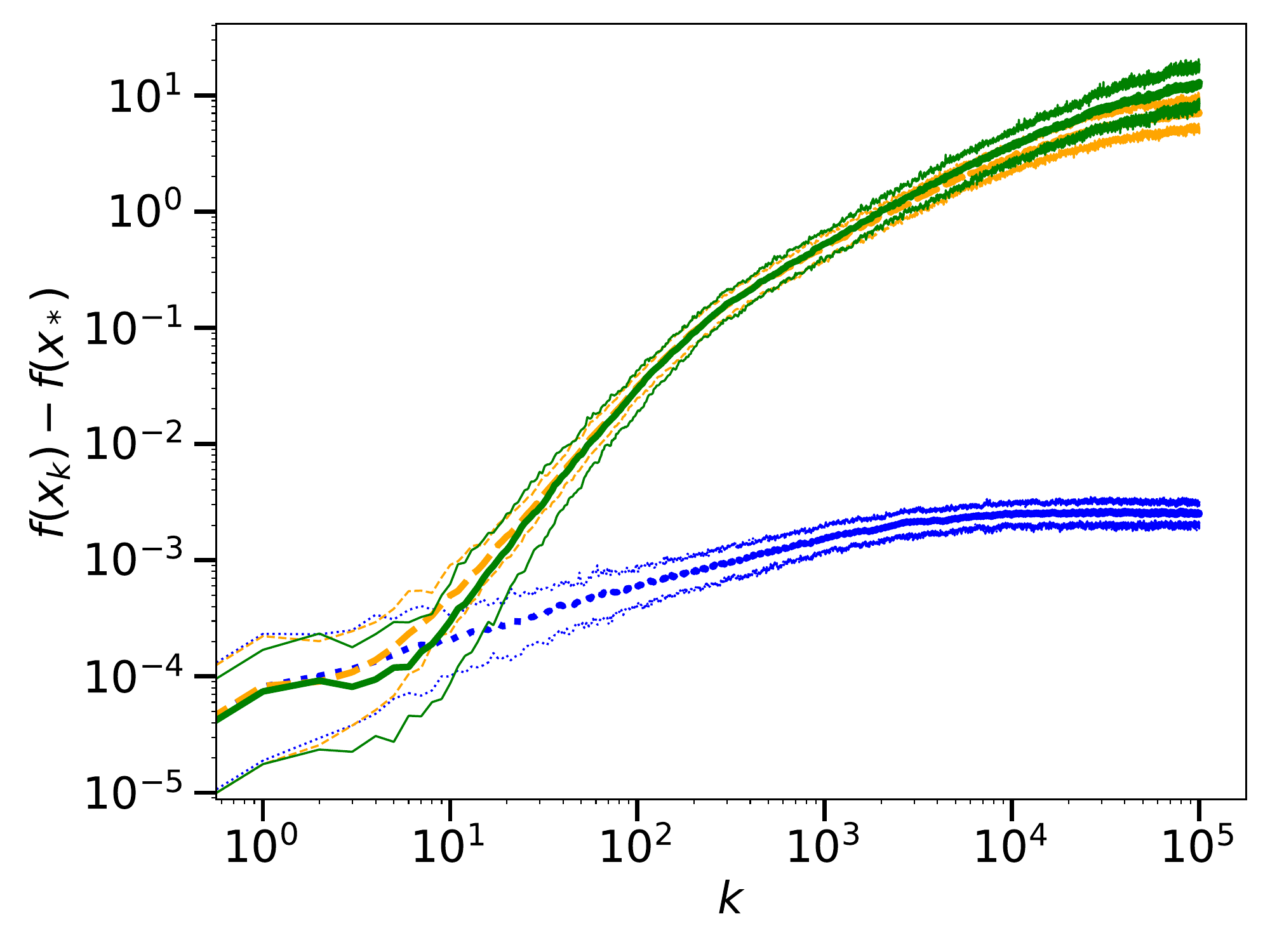}
\includegraphics[width=0.49\linewidth]{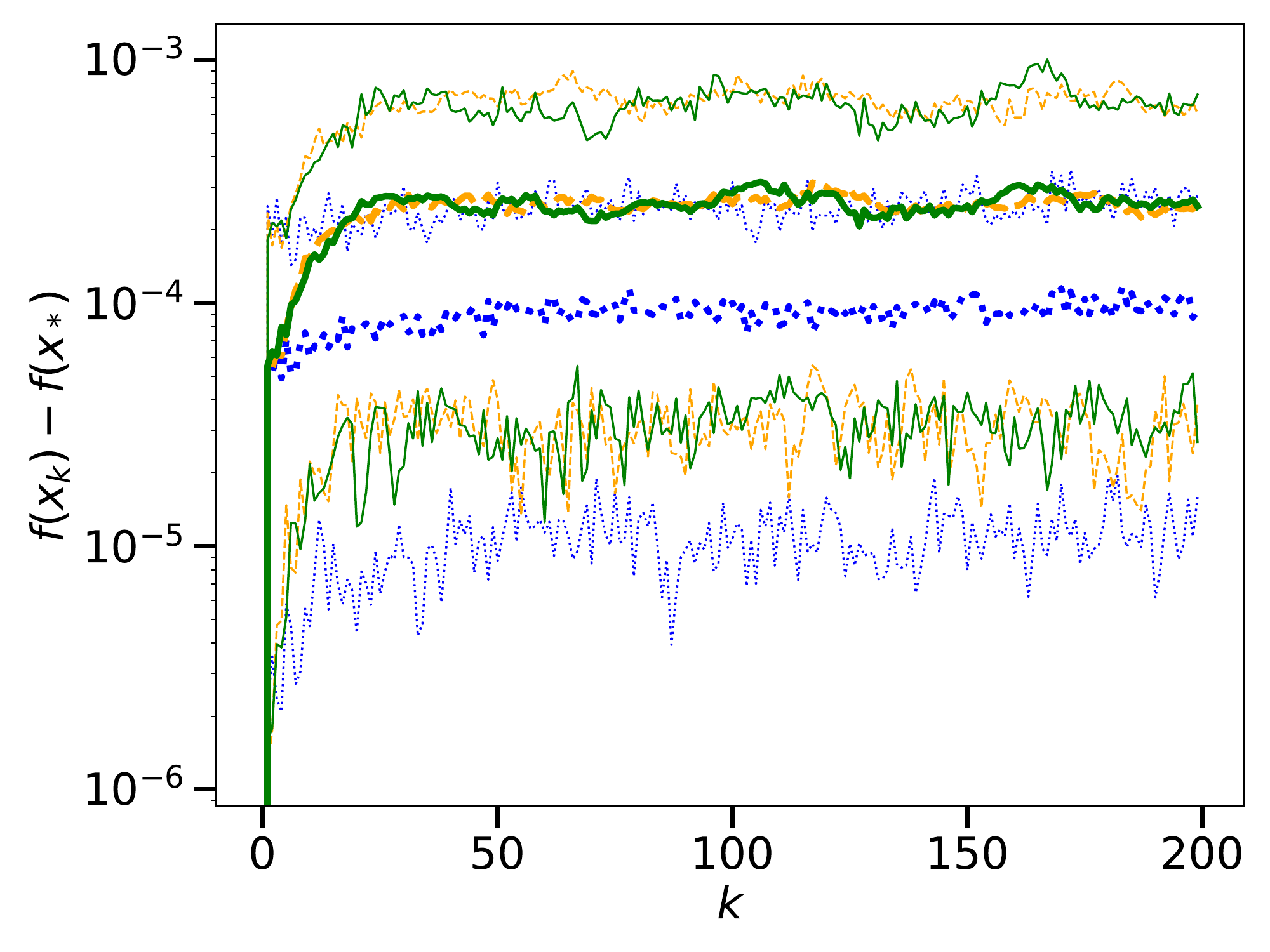}
	\caption{Effect of additive noise on gradient descent, Nesterov acceleration, and the continuized version of Nesterov acceleration, on a convex function (left) and a strongly convex function (right). All algorithms are started from the optimum $x_*$. The results shown in the above plots correspond to a single run. In the plots below, the thick line represents the average performance over $N = 100$ runs of each algorithm, while the thin lines represent the $5\%$ and $95\%$ quantiles.}
	\label{fig:comparison-add}
\end{figure}
In Figure \ref{fig:comparison-add}, we run the same simulations as in Figure \ref{fig:comparison}, with two differences: (1) we add isotropic Gaussian noise on the gradients, with covariance $10^{-4}\Id$, and (2) we initialized algorithms at the optimum, i.e., $x_0 = z_0 = x_*$. Initializing at the optimum enables to isolate the effect of the additive noise only. These simulations confirm Theorem \ref{thm:additive-noise}: the noise term is (sub-)linearly increasing in the convex case and constant in the strongly convex case. 

Note that similarly to Theorem \ref{thm:discretization}, one could obtain convergence bounds for the discrete implementation under the presence of additive noise. 

\section{Stochastic calculus toolbox}
\label{ap:toolbox}

In this appendix, we give a short introduction to the mathematical tools that we use in this paper. For more details, the reader can consult the more rigorous monographs of \citet{jacod2013limit,ikeda2014stochastic,le2016brownian}.

\subsection{Poisson point measures}

We fix $\cP$ a probability law on some space $\Xi$. 

\begin{definition}
	A \emph{(homogenous) Poisson point measure} on $\R_{\geq 0} \times \Xi$, with intensity $\nu(\diff t, \diff \xi) = \diff t \otimes \diff \cP(\xi)$, is a random measure $N$ on $\R_{\geq 0} \times \Xi$ such that
	\begin{itemize}
		\item For any disjoint measurable subsets $A$ and $B$ of $\R_{\geq 0} \times \Xi$, $N(A)$ and $N(B)$ are independent.
		\item For any measurable subset $A$ of $\R_{\geq 0} \times \Xi$, $N(A)$ is a Poisson random variable with parameter $\nu(A)$. (If $\nu(A) = \infty$, $N(A)$ is equal to $\infty$ almost surely.)  
	\end{itemize}
\end{definition}

\begin{proposition}
	\label{prop:decomposition-poisson-measure}
	Let $N$ be a Poisson point measure on $\R_{\geq 0} \times \Xi$ with intensity $\diff t \otimes \diff \cP(\xi)$. 
	
	There exists a decomposition $\diff N(t,\xi) = \sum_{k\geq 1} \delta_{(T_k,\xi_k)}(\diff t, \diff \xi)$ on $\R_{\geq 0} \times \Xi$ where $0 < T_1 < T_2 < T_3 < \dots$ and $\xi_1, \xi_2, \xi_3, \dots \in \Xi$ satisfy:
	\begin{itemize}
		\item $T_1, T_2-T_1, T_3 - T_2, \dots$ are i.i.d.~of law exponential with rate $1$,
		\item $\xi_1, \xi_2, \xi_3, \dots$ are i.i.d.~of law $\cP$ and independent of the $T_1, T_2, T_3, \dots$. 
	\end{itemize}
\end{proposition}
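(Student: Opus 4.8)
The plan is to construct the decomposition in two stages, first extracting the jump times from the time-marginal of $N$ and then identifying the attached marks.

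First, I would project onto the time axis. Define the random measure $M$ on $\R_{\geq 0}$ by $M(A) = N(A \times \Xi)$. Because $\nu(A \times \Xi) = \int_A \diff t \cdot \cP(\Xi) = |A|$ and disjoint time sets $A, B$ pull back to disjoint product sets $A\times\Xi, B\times\Xi$, the two defining properties of a Poisson point measure transfer immediately: $M(A)$ and $M(B)$ are independent for disjoint $A, B$, and $M(A)$ is Poisson with parameter $|A|$. Hence $M$ is a homogeneous Poisson point measure on $\R_{\geq 0}$ with intensity $\diff t$. Second, I would invoke the classical waiting-time characterization of the one-dimensional Poisson process. Since $\diff t$ is non-atomic and finite on bounded intervals, $M$ is almost surely a locally finite sum of unit masses at distinct points, which we order as $0 < T_1 < T_2 < \cdots$. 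Computing $\P(T_1 > t) = \P(M([0,t]) = 0) = e^{-t}$, and then using the independence of $M$ over disjoint intervals to obtain memoryless regeneration at each $T_k$, yields that $T_1, T_2 - T_1, T_3 - T_2, \ldots$ are i.i.d.\ exponential with rate $1$. This is the standard equivalence between the counting and the inter-arrival descriptions of a Poisson process.

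Third---the crux of the argument---I would attach and identify the marks. As the times are almost surely distinct, each atom of $M$ at $T_k$ lifts to a unique atom of $N$ at a point $(T_k, \xi_k)$, which defines the mark $\xi_k \in \Xi$. To pin down their law, fix $T > 0$ and a measurable $B \subseteq \Xi$: the sets $[0,T] \times B$ and $[0,T] \times B^c$ are disjoint, so the restrictions of $N$ to them are independent Poisson point measures, and conditionally on $M([0,T]) = n$ the number of marks in $B$ is $\mathrm{Binomial}(n, \cP(B))$. Thus each mark independently falls in $B$ with probability $\cP(B)$, independently of the times. Letting $B$ range over a generating $\pi$-system and applying a monotone-class argument over finite measurable partitions of $\Xi$ upgrades this to the full statement that $\xi_1, \xi_2, \ldots$ are i.i.d.\ of law $\cP$ and independent of $T_1, T_2, \ldots$.

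The main obstacle is this last disintegration: promoting the single-set Binomial computation to the full joint law of all the marks. The cleanest packaging is via the Laplace functional of $N$, namely $\E[\exp(-\int g\,\diff N)] = \exp(-\int (1 - e^{-g})\,\diff\nu)$ with $\nu = \diff t \otimes \diff\cP(\xi)$; the fact that $\nu$ is a product measure in $(t,\xi)$ forces the corresponding factorization of the joint distribution of the points $(T_k, \xi_k)$, simultaneously recovering the exponential inter-arrival times, the i.i.d.\ marks of law $\cP$, and their mutual independence, which is exactly the asserted decomposition.
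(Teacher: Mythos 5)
Your proof is correct, but be aware that the paper never proves this proposition: it is stated in the stochastic-calculus toolbox (Appendix~\ref{ap:toolbox}), which explicitly defers all rigorous details to the monographs it cites (Jacod--Shiryaev, Ikeda--Watanabe, Le Gall), so the ``paper's proof'' is a citation. What you wrote is essentially the standard argument those references give for the representation of a marked Poisson process, and it is sound: (i) the time marginal $M(\cdot)=N(\cdot\times\Xi)$ inherits both defining properties and is a rate-$1$ Poisson point measure on $\R_{\geq0}$ precisely because $\cP(\Xi)=1$; (ii) non-atomicity and local finiteness of Lebesgue intensity give a.s.\ simplicity and distinct times, so the atoms admit a measurable enumeration $0<T_1<T_2<\cdots$ with i.i.d.\ exponential gaps; (iii) the marks must then be identified. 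You correctly flag (iii) as the crux: the single-set binomial computation alone does not yield the joint law of the \emph{ordered} marks nor their independence from the time values, and your repair via the Laplace functional $\E\bigl[\exp\bigl(-\int g\,\diff N\bigr)\bigr]=\exp\bigl(-\int(1-e^{-g})\,\diff\nu\bigr)$ is the cleanest standard route: one computes the Laplace functional of the explicitly constructed process $\sum_k\delta_{(\tilde T_k,\tilde\xi_k)}$ with exponential gaps and independent i.i.d.\ marks, finds it matches, concludes $N$ has the same law, and transfers the conclusion through the measurable enumeration furnished by (i)--(ii). The only point you leave implicit---as does the paper's definition itself---is that an integer-valued random measure is a.s.\ a countable sum of Dirac masses, which requires $\Xi$ to be a Borel (standard) measurable space; this assumption is made in all the cited monographs and is a defect of the statement's generality, not of your argument. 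In short, your proposal supplies, correctly and by the standard route, exactly the proof that the paper outsources.
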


\begin{definition}
	Let $N$ be a Poisson point measure on $\R_{\geq 0} \times \Xi$ with intensity $\diff t \otimes \diff \cP(\xi)$. The \emph{filtration $\cF_t$, $t \geq 0$, generated by $N$} is defined by the formula
	\begin{align*}
		\cF_t = \sigma\left( N([0,s]\times A) \, , \, s\leq t, A \subset \Xi \text{ measurable}\right) \, .
	\end{align*}
\end{definition}

\subsection{Martingales and supermartingales}

Let $(\Omega, \cF, \P)$ be a probability space and $\cF_t$, $t \geq 0$, a filtration on this probability space.

\begin{definition}
A random process $x_t \in \R^d$, $t \geq 0$, is \emph{adapted} if for all $t \geq 0$, $x_t$ is $\cF_t$-measurable. 
An adapted process $x_t \in \R$, $t \geq 0$ is a \emph{martingale} (resp.~\emph{supermartingale}) if for all $0 \leq s \leq t$, $\E[x_t | \cF_s] = x_s$ (resp.~$\E[x_t | \cF_s] \leq x_s$).
\end{definition}

\begin{definition}
	A random variable $T \in [0, \infty]$ is a \emph{stopping time} if for all $t \geq 0$, $\{T \leq t\} \in \cF_t$. 
\end{definition}

\begin{definition}
	\label{def:cadlag}
	A function $x_t, t \geq 0$, is said to be \emph{c\`adl\`ag} if it is right continuous and for every $t > 0$, the limit $x_{t-} := \lim_{s \to t, s < t} x_s$ exists and is finite. 
\end{definition}

\begin{theorem}[Martingale stopping theorem]
	\label{thm:stopping}
	Let $x_t$, $ t \geq 0$, be a martingale (resp.~supermartingale) with c\`adl\`ag trajectories and uniformly integrable. Let $T$ be a stopping time. Then $\E X_T = X_0$ (resp.~$\E X_T \leq X_0$).
\end{theorem}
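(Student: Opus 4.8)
The plan is to reduce the continuous-time statement to the classical discrete-time optional stopping theorem for martingales (resp.~supermartingales) indexed by $\N$, which I take as known, and then to remove successively the two features that separate the discrete and continuous settings: that the stopping time be bounded and take values in a discrete grid. I treat the martingale case; the supermartingale case is identical up to replacing the closure identities by the corresponding inequalities, with attention to the direction of the inequality under the dyadic approximation. First I would fix $n\geq 1$ and consider the truncation $T\wedge n$. For each $m\geq 1$, define the dyadic over-approximation $T_m=2^{-m}\lceil 2^m (T\wedge n)\rceil$, which is a stopping time taking finitely many values on the grid $2^{-m}\N$, satisfies $T\wedge n\leq T_m\leq n+1$, and decreases to $T\wedge n$ as $m\to\infty$. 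Restricted to the grid, $(x_{k2^{-m}})_{k\geq 0}$ is a discrete-time martingale for the filtration $(\cF_{k2^{-m}})_{k\geq 0}$, and $T_m$ is a bounded stopping time for it, so the discrete optional stopping theorem gives $\E\,x_{T_m}=x_0$ (resp.~$\E\,x_{T_m}\leq x_0$).

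Next I would pass to the continuous bounded stopping time $T\wedge n$. Since the trajectories are c\`adl\`ag (Definition~\ref{def:cadlag}) and $T_m\downarrow T\wedge n$ from the right, right continuity yields $x_{T_m}\to x_{T\wedge n}$ almost surely. To move the limit inside the expectation I would establish uniform integrability of the family $\{x_{T_m}\}_{m\geq 1}$: each $x_{T_m}=\E[x_{n+1}\,|\,\cF_{T_m}]$ by the conditional form of discrete optional sampling applied on the grid to the stopping times $T_m\leq n+1$, and a family of conditional expectations of the single integrable variable $x_{n+1}$ is automatically uniformly integrable. Vitali's convergence theorem then gives $\E\,x_{T\wedge n}=\lim_m \E\,x_{T_m}=x_0$ (resp.~$\leq x_0$).

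It then remains to let $n\to\infty$ in the identity $\E\,x_{T\wedge n}=x_0$ (resp.~$\leq$). On the event $\{T<\infty\}$ we have $T\wedge n=T$ for $n$ large, so $x_{T\wedge n}\to x_T$ almost surely there. On $\{T=\infty\}$ I would use that a uniformly integrable martingale (resp.~supermartingale) converges almost surely and in $L^1$ to a terminal variable $x_\infty$ (the martingale convergence theorem), and set $x_T:=x_\infty$ on this event, so that $x_{T\wedge n}=x_n\to x_T$ almost surely there as well. The standing hypothesis that the whole process is uniformly integrable is exactly what licenses interchanging limit and expectation once more through Vitali, yielding $\E\,x_T=x_0$ (resp.~$\E\,x_T\leq x_0$).

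The main obstacle is the optional-sampling step for a single bounded continuous-time stopping time, since this is where the passage from discrete to continuous indices genuinely happens. Two points require care. One must verify that the dyadic approximants $T_m$ are indeed stopping times, which follows from $\{T_m\leq k2^{-m}\}=\{T\wedge n\leq k2^{-m}\}\in\cF_{k2^{-m}}$ and hence uses only the stated filtration; and in the supermartingale case one must approximate $T$ \emph{from above}, so that the discrete inequality $\E\,x_{T_m}\leq x_0$ points in the correct direction and is preserved under the c\`adl\`ag limit (approximating from below would reverse it). The uniform-integrability hypothesis is used twice and is essential: without it neither the interchange of limit and expectation nor the existence of the terminal variable $x_\infty$ needed to define $x_T$ on $\{T=\infty\}$ would be available.
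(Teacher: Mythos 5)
The paper itself offers no proof of Theorem~\ref{thm:stopping}: it is recalled in the toolbox of Appendix~\ref{ap:toolbox} as a classical result, with the reader referred to the monographs cited there. So the benchmark is the standard textbook argument, and that is exactly the route you take (dyadic approximation of the stopping time from above, discrete optional sampling on the grids, a uniform-integrability passage to the limit, then $T\wedge n\uparrow T$). Your martingale branch is complete and correct, including the verification that the $T_m$ are grid stopping times and the remark that approximation from above is what makes right continuity usable.

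The gap is in the supermartingale branch --- which is the only case the paper actually invokes, when applying the theorem to the supermartingale $\phi_t$ with the stopping times $T_k$ in Appendix~\ref{ap:proof-continuized} --- and it is not ``identical up to inequalities'': both of your uniform-integrability steps rest on martingale-only facts. First, for a supermartingale, discrete optional sampling gives only $x_{T_m}\geq \E[x_{n+1}\mid \cF_{T_m}]$, so $\{x_{T_m}\}_m$ is \emph{not} a family of conditional expectations of a single integrable variable, and its uniform integrability does not come for free. The standard repairs are either the reversed-supermartingale convergence theorem applied to $(x_{T_m},\cF_{T_m})_m$, using $T_{m+1}\leq T_m$ and the fact that $\E x_{T_m}$ is nondecreasing in $m$ and bounded above by $\E x_0$; or, more cheaply, Jensen applied to the closure inequality, which yields $x_{T_m}^-\leq \E[x_{n+1}^-\mid\cF_{T_m}]$, hence uniform integrability of the negative parts, after which Fatou --- not Vitali --- already delivers $\E x_{T\wedge n}\leq \E x_0$, which is all the theorem claims. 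Second, in your last step, uniform integrability of the process $\{x_t\}_{t\geq 0}$ at deterministic times does not by itself imply uniform integrability of the random-time family $\{x_{T\wedge n}\}_n$: for martingales this follows from the closure identity $x_{T\wedge n}=\E[x_\infty\mid\cF_{T\wedge n}]$, but for supermartingales one must again combine the closure inequality $x_{T\wedge n}\geq \E[x_\infty\mid\cF_{T\wedge n}]$ with uniform integrability of the negative parts and Fatou. Both holes are repairable with standard tools, but as written the supermartingale conclusion does not follow from your argument. (In the paper's concrete application $\phi_t\geq 0$, so plain Fatou would suffice there; the theorem as stated, however, concerns general uniformly integrable supermartingales.)
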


\subsection{Stochastic ordinary differential equation with Poisson jumps}

The continuized processes are the composition of an ordinary differential equation and stochastic Poisson jumps. It is thus a piecewise-deterministic Markov process \cite{davis1984piecewise,davis2018markov}, a special case of stochastic models that do not include any diffusion term. The stochastic calculus of this class of processes is particularly intuitive: there is no Ito correction term as with diffusive processes. 

We fix $\cP$ a probability law on some space $\Xi$, $N$ a Poisson point measure on $\R_{\geq 0} \times \Xi$ with intensity $\diff t \otimes \diff \cP(\xi)$, and denote $\cF_t$, $t \geq 0$, the filtration generated by $N$. 

\begin{definition}
	Let $b: \R^d \to \R^d$ and $G: \R^d \times \Xi \to \R^d$ be two functions. An random process $x_t \in \R^d$, $t \geq 0$, is said to be a solution of the equation
	\begin{align*}
		\diff x_t = b(x_t) \diff t + \int_{\Xi} G(x_t, \xi) \diff N(t, \xi)
	\end{align*}
	if it is adapted, c\`adl\`ag, and for all $t \geq 0$,
		\begin{align*}
		x_t =  x_0 + \int_{0}^{t}b(x_s) \diff s + \int_{[0,t]\times \Xi} G(x_{s-}, \xi) \diff N(s, \xi) \, .
	\end{align*}
If we consider the decomposition $\diff N(t,\xi) = \sum_{k\geq 1} \delta_{(T_k,\xi_k)}(\diff t, \diff \xi)$ given by Proposition \ref{prop:decomposition-poisson-measure}, then 
\begin{align*}
	\int_{[0,t]\times \Xi} G(x_{s-}, \xi) \diff N(s, \xi) = \sum_{k \geq 1} \bfone_{\{T_k \leq t\}} G(x_{T_k-}, \xi_k) \, .
\end{align*}
\end{definition}

Here, we consider only autonomous equations as $b$ and $G$ are a function of $x_t$, but not of $t$. However, there is no loss of generality, one can study time-dependent systems by studying the equation in the variable $(t,x_t)$. This trick is used in Appendix \ref{ap:analysis-continuized}.

\begin{proposition}
	\label{prop:ito}
	 Let $x_t \in \R^d$ be a solution of
	 	\begin{align*}
	 	\diff x_t = b(x_t) \diff t + \int_{\Xi} G(x_t, \xi) \diff N(t, \xi)
	 \end{align*}
	and $\varphi : \R^d \to \R$ be a smooth function. Then 
	\begin{align*}
		\varphi(x_t) = \varphi(x_0) + \int_{0}^{t} \langle \nabla \varphi(x_s), b(x_s) \rangle \diff s + \int_{[0,t]\times \Xi}\left(\varphi(x_{s-}+G(x_{s-}, \xi) ) - \varphi(x_{s-}) \right) \diff N(s, \xi) \, . 
	\end{align*}
Moreover, we have the decomposition 
\begin{align*}
	&\int_{[0,t]\times \Xi} \left(\varphi(x_{s-}+G(x_{s-}, \xi) ) - \varphi(x_{s-}) \right) \diff N(s, \xi)  \\
	&\qquad = 	\int_{0}^{t}\int_{\Xi} \left(\varphi(x_{s}+G(x_{s}, \xi) ) - \varphi(x_{s} )\right) \diff t \diff \cP(\xi) + M_t \, , 
\end{align*}
where $M_t = \int_{[0,t]\times \Xi} \left(\varphi(x_{s-}+G(x_{s-}, \xi) ) - \varphi(x_{s-}) \right) (\diff N(s, \xi) - \diff t \diff \cP(\xi))$ is a martingale. 
\end{proposition}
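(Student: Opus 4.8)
The plan is to exploit the piecewise-deterministic structure of $x_t$: between two consecutive jump times the process solves the ordinary differential equation $\dot{x}_s = b(x_s)$, so $\varphi(x_s)$ evolves smoothly by the chain rule, and at each jump time $\varphi$ picks up an increment equal to exactly its own jump. First I would record that, since $\cP$ is a probability measure, the intensity of $N$ on the compact $[0,t]\times\Xi$ is $\nu([0,t]\times\Xi) = t < \infty$; hence almost surely only finitely many of the $T_k$ fall in $[0,t]$. Fixing such a realization, write $0 = T_0 < T_1 < \dots < T_n \leq t < T_{n+1}$ for the ordered jump times in $[0,t]$. On each open interval $(T_k, T_{k+1})$ the defining integral equation reduces to $x_s = x_{T_k} + \int_{T_k}^s b(x_u)\,\diff u$, so $s \mapsto x_s$ is $C^1$ there with $\dot{x}_s = b(x_s)$; by the chain rule and the fundamental theorem of calculus,
\[
\varphi(x_{T_{k+1}-}) - \varphi(x_{T_k}) = \int_{T_k}^{T_{k+1}} \langle \nabla \varphi(x_s), b(x_s) \rangle \, \diff s,
\]
while at each jump time $T_{k+1} \leq t$ the process jumps by $G(x_{T_{k+1}-}, \xi_{k+1})$, so $\varphi(x_{T_{k+1}}) - \varphi(x_{T_{k+1}-}) = \varphi(x_{T_{k+1}-} + G(x_{T_{k+1}-}, \xi_{k+1})) - \varphi(x_{T_{k+1}-})$.

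Next I would telescope $\varphi(x_t) - \varphi(x_0)$ across the smooth intervals $(T_k, T_{k+1})$ and the jump times. The smooth increments assemble into $\int_0^t \langle \nabla \varphi(x_s), b(x_s) \rangle \, \diff s$, the jump set being Lebesgue-null so that the value of the integrand there is irrelevant. The jump increments assemble into
\[
\sum_{k \,:\, T_k \leq t} \big( \varphi(x_{T_k-} + G(x_{T_k-}, \xi_k)) - \varphi(x_{T_k-}) \big) = \int_{[0,t]\times \Xi} \big( \varphi(x_{s-}+G(x_{s-},\xi)) - \varphi(x_{s-}) \big) \, \diff N(s,\xi),
\]
using the decomposition $\diff N = \sum_{k\geq 1} \delta_{(T_k,\xi_k)}$ recalled in the definition. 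Summing the two contributions yields the first displayed identity of the proposition.

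For the decomposition into compensator and martingale, set $H(s,\xi) = \varphi(x_{s-}+G(x_{s-},\xi)) - \varphi(x_{s-})$, which is predictable because $s \mapsto x_{s-}$ is left-continuous and adapted. The claim
\[
\int_{[0,t]\times\Xi} H \, \diff N = \int_0^t \int_\Xi H \, \diff s \, \diff \cP + M_t, \qquad M_t = \int_{[0,t]\times\Xi} H \, (\diff N(s,\xi) - \diff s \, \diff \cP(\xi)),
\]
is the compensation formula for Poisson integrals: the compensator of $N$ is $\diff s \, \diff \cP$, and the compensated integral $M_t$ is a martingale. To invoke it rigorously I would verify the integrability condition $\E \int_0^t \int_\Xi |H(s,\xi)| \, \diff s \, \diff \cP(\xi) < \infty$ (or work with the localized $L^2$ version); the mean-zero increments $\E[M_t - M_s \mid \cF_s] = 0$ then follow from the independence of $N$ on disjoint time intervals together with the predictability of $H$. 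Finally, since $x_s = x_{s-}$ for Lebesgue-almost every $s$, the compensator may be written with $x_s$ in place of $x_{s-}$, matching the statement.

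The hard part will be the martingale claim, specifically the integrability bookkeeping needed to pass from a local martingale to a genuine (uniformly integrable) martingale: one must control the moments of $H(s,\xi)$ uniformly enough on $[0,t]$ to apply the compensation theorem, which is where growth hypotheses on $\varphi$, $b$, and $G$ (implicit in the well-posedness of the equation and the non-explosion of $x_s$ before time $t$) enter. By contrast, the pathwise change-of-variables part is elementary once finiteness of the number of jumps on $[0,t]$ is established, precisely because this is a piecewise-deterministic Markov process with no diffusion term and hence no Itô second-order correction.
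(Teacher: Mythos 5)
The paper itself does not prove this proposition: it is stated as part of the stochastic-calculus toolbox of Appendix~\ref{ap:toolbox}, justified only by the remark that it is ``an elementary calculus of variations formula'' with no It\^o correction term, and deferred to the monographs cited at the top of that appendix (Jacod--Shiryaev, Ikeda--Watanabe, Le~Gall). Your proposal essentially supplies the proof those references contain, and it is correct: almost-sure finiteness of $N([0,t]\times\Xi)$ (a Poisson variable with mean $t$) justifies the pathwise decomposition into finitely many smooth arcs and jumps; on each arc the integral equation makes $s\mapsto x_s$ absolutely continuous with derivative $b(x_s)$, so the chain rule and telescoping yield the first identity; and the second identity is the compensation formula for integrals of predictable integrands against a Poisson random measure, with predictability of $H(s,\xi)=\varphi(x_{s-}+G(x_{s-},\xi))-\varphi(x_{s-})$ coming from the left-continuity and adaptedness of $s\mapsto x_{s-}$. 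Your closing caveat is the right one, and it points at an imprecision in the paper's statement rather than a gap in your argument: without an integrability hypothesis such as $\E\int_0^t\int_\Xi |H(s,\xi)|\,\diff s\,\diff\cP(\xi)<\infty$, the compensated integral $M_t$ is in general only a local martingale, whereas the paper asserts ``martingale'' unconditionally (and later applies the stopping theorem, which further requires uniform integrability). In the paper's applications this is harmless because the relevant integrands are controlled by the Lyapunov/supermartingale structure of the specific processes analyzed, but a self-contained toolbox statement would need either the integrability condition you identify or a localization argument. In short: where the paper buys brevity by outsourcing to the literature, your route buys a self-contained and essentially rigorous proof, at the cost of having to make explicit the integrability bookkeeping that the paper leaves implicit.
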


This proposition is an elementary calculus of variations formula: to compute the value of the observable $\varphi(x_t)$, one must sum the effects of the continuous part and of the Poisson jumps. Moreover, the integral with respect to the Poisson measure $N$ becomes a martingale if the same integral with respect to its intensity measure $\diff t \otimes \diff \cP(\xi)$ is removed.

\section{Analysis of the continuized Nesterov acceleration}
\label{ap:analysis-continuized}

To encompass the proofs in the convex and in the strongly convex cases in a unified way, we assume $f$ is $\mu$-strongly convex, $\mu \geq 0$. If $\mu > 0$, this corresponds to assuming the $\mu$-strong convexity in the usual sense; if $\mu = 0$, it means that we only assume the function to be convex. In other words, the proofs in the convex case can be obtained by taking $\mu = 0$ below.   

In this section, $\cF_t$, $t\geq 0$, is the filtration associated to the Poisson point measure $N$.
%, and we denote $\E_t$ the conditional expectation with respect to $\cF_t$.

\subsection{Sketch of proof for Theorem \ref{thm:continuized}}
\label{ap:sketch-prrof}

A complete and rigorous proof is given in Appendix \ref{ap:proof-continuized}. Here, we only provide the heuristic of the main lines of the proof. 
	
	The proof is similar to the one of Nesterov acceleration: we prove that for some choices of parameters $\eta_t, \eta_t', \gamma_t, \gamma_t'$, $t \geq 0$, and for some functions $A_t, B_t$, $t \geq 0$, 
	\begin{equation*}
		\phi_t = A_t\left(f(x_t)-f(x_*)\right) + \frac{B_t}{2}\Vert z_t - x_* \Vert^2  
	\end{equation*}
	is a supermartingale. In particular, this implies that $\E\phi_t$ is a Lyapunov function, i.e., a non-increasing function of $t$. 
	
	To prove that $\phi_t$ is a supermartingale, it is sufficient to prove that for all infinitesimal time intervals $[t,t+\diff t]$, $\E_t \phi_{t+\diff t} \leq \phi_t$, where $\E_t$ denotes the conditional expectation knowing all the past of the Poisson process up to time $t$. Thus we would like to compute the first order variation of $\E_t \phi_{t+\diff t}$. This implies computing the first order variation of $\E_t f(x_{t+\diff t})$. 
	
	From \eqref{eq:continuized-1}, we see that $f(x_t)$ evolves for two reasons between $t$ and $t + \diff t$:
	\begin{itemize}
		\item $x_t$ follows the linear ODE \eqref{eq:continuous-part-1}, which results in the infinitesimal variation $f(x_t) \rightarrow f(x_t) + \eta_t \langle \nabla f(x_t), z_t-x_t \rangle \diff t$, and
		\item with probability $\diff t$, $x_t$ takes a gradient step, which results in a macroscopic variation $f(x_t) \rightarrow f\left(x_t - \gamma_t \nabla f(x_t)\right)$. 
	\end{itemize} 
Combining both variations, we obtain that 
\begin{equation*}
\E_tf(x_{t+\diff t}) \approx f(x_t) + \eta_t \langle \nabla f(x_t), z_t-x_t \rangle \diff t + \diff t \left(f\left(x_t - \gamma_t \nabla f(x_t)\right) - f(x_t)\right) \, ,
\end{equation*}
where the $\diff t$ in the second term corresponds to the probability that a gradient step happens; note that the latter event is independent of the past up to time $t$. 

A similar computation can be done for $\E_t \Vert z_t - x_* \Vert^2$. Putting things together, we obtain 
\begin{align*}
    \E_t \phi_{t+\diff t} - \phi_t \approx \diff t \bigg(&\frac{\diff A_t}{\diff t}(f(x_t)-f(x_*))+A_t\eta_t\langle \nabla f(x_t), z_t-x_t\rangle \\
    &- A_t \left(f(x_t-\gamma_t \nabla f(x_t))-f(x_t)\right) + \frac{\diff B_t}{\diff t} \frac{1}{2}\Vert z_t - x_* \Vert^2 \\
    &+ B_t \eta_t' \langle z_t - x_*, x_t - z_t \rangle + \frac{B_t}{2} \big(\Vert z_t - \gamma_t'\nabla f(x_t)-x_* \Vert^2 - \Vert z_t - x_* \Vert^2 \big)\bigg) \, .
\end{align*}
Using convexity and strong convexity inequalities, and a few computations, we obtain the following upper bound: 
\begin{align*}
    \E_t \phi_{t+\diff t} - \phi_t \lesssim\diff t \bigg(&\left(\frac{\diff A_t}{\diff t} - A_t \eta_t\right)  \langle \nabla f(x_t), x_t- x_* \rangle + \left(\frac{\diff B_t }{\diff t}- B_t\eta_t'\right) \frac{1}{2}\Vert z_t - x_* \Vert^2 \\
&+ (A_t \eta_t- B_t \gamma_t')  \langle \nabla f(x_t), z_t- x_* \rangle + \left(B_t \eta_t'- \frac{\diff A_t}{\diff t} \mu\right) \frac{1}{2}\Vert x_t - x_* \Vert^2\\ 
&  +\left( B_t \gamma_t'^2 -A_t \gamma_t \left(2 - {L\gamma_t}\right)\right)  \frac{1}{2}\Vert \nabla f(x_t) \Vert^2 \bigg) \, .
\end{align*}
We want this infinitesimal variation to be non-positive. Here, we choose the parameters so that $\gamma_t = 1/L$, and all prefactors in the above expression are zero. This gives some constraints on the choices of parameters. We show that only one degree of freedom is left: the choice of the function $A_t$, that must satisfy the ODE 
\begin{equation*}
\frac{\diff^2}{\diff t^2} \left(\sqrt{A_t}\right) = \frac{\mu}{4L} \sqrt{A_t} \, ,
\end{equation*}
but whose initialization remains free. Once the initialization of the function $A_t$ is chosen, this determines the full function $A_t$ and, through the constraints, all parameters of the algorithm. As $\phi_t$ is a supermartingale (by design), a bound on the performance of the algorithm is given by
\begin{align*}
\E f(x_t) - f(x_*) \leq \frac{\E \phi_t}{A_t} \leq \frac{\phi_0}{A_t} \, .
\end{align*}
The results presented in Theorem \ref{thm:continuized} correspond to one special choice of initialization for the function~$A_t$. 

	In this sketch of proof, our derivation of the infinitesimal variation is intuitive and elementary; however it can be made more rigorous and concise---albeit more technical---using classical results from stochastic calculus, namely Proposition \ref{prop:ito}. This is our approach in Appendix \ref{ap:proof-continuized}.

\subsection{Noiseless case: proofs of Theorem \ref{thm:continuized} and of the bounds of Theorem \ref{thm:discretization}}
\label{ap:proof-continuized}

In this section, we analyze the convergence of the continuized iteration \eqref{eq:continuized-1}-\eqref{eq:continuized-2}, that we recall for the reader's convenience:
\begin{align*}
\diff x_{t} &= \eta_t (z_t-x_t) \diff t - \gamma_t \nabla f(x_t)\diff N(t)  \, ,\\
\diff z_{t} &= \eta_t' (x_t-z_t)\diff t - \gamma_t'\nabla f(x_t)\diff N(t) \, .
\end{align*}
The choices of parameters $\eta_t, \eta'_t, \gamma_t, \gamma'_t$, $t\geq 0$, and the corresponding convergence bounds follow naturally from the analysis. We seek sufficient conditions under which the function
\begin{equation*}
\phi_t = A_t\left(f(x_t)-f_*\right) + \frac{B_t}{2} \Vert z_t - x_* \Vert^2  
\end{equation*}
is a supermartingale. 

The process $\bar{x}_t = (t,x_t,z_t)$ satisfies the equation 
\begin{align*}
	&\diff \bar{x}_t = b(\bar{x}_t) \diff t + G(\bar{x}_t) \diff N(t) \, , && b(\bar{x}_t) = \begin{pmatrix}
		1 \\
		\eta_t(z_t-x_t) \\
		\eta_t'(x_t-z_t) 
	\end{pmatrix} \, , &&G(\bar{x}_t) = \begin{pmatrix}
	0 \\
	-\gamma_t \nabla f(x_t) \\
	-\gamma_t' \nabla f(x_t)
	\end{pmatrix} \, .
\end{align*}
We thus apply Proposition \ref{prop:ito} to $\phi_t = \varphi(\bar{x}_t) = \varphi(t,x_t,z_t)$ where 
\begin{align*}
\varphi(t,x,z) = A_t\left(f(x)-f(x_*)\right) + \frac{B_t}{2}\Vert z - x_* \Vert^2 \, ,
\end{align*}
we obtain:
\begin{align*}
\phi_t = \phi_0 + \int_{0}^{t} \langle \nabla \varphi(\bar{x}_s), b(\bar{x}_s) \rangle \diff s + \int_{0}^{t} \left(\varphi(\bar{x}_{s}+G(\bar{x}_{s}) ) - \varphi(\bar{x}_{s}) \right) \diff s + M_t \, ,
\end{align*}
where $M_t$ is a martingale. Thus, to show that $\varphi_t$ is a supermartingale, it is sufficient to show that the map $t \mapsto \int_{0}^{t} \langle \nabla \varphi(\bar{x}_s), b(\bar{x}_s) \rangle \diff s + \int_{0}^{t} \left(\varphi(\bar{x}_{s}+G(\bar{x}_{s}) ) - \varphi(\bar{x}_{s}) )\right) \diff s$ is non-increasing almost surely, i.e., 
\begin{align*}
I_t := \langle \nabla \varphi(\bar{x}_t), b(\bar{x}_t) \rangle +\varphi(\bar{x}_{t}+G(\bar{x}_{t}) ) - \varphi(\bar{x}_{t})  \leq 0 \, . 
\end{align*}
We now compute 
\begin{align*}
	\langle \nabla \varphi(\bar{x}_{t}), b(\bar{x}_{t}) \rangle &= \partial_t \varphi(\bar{x}_{t}) + \langle \partial_x \varphi(\bar{x}_{t}), \eta_t(z_t-x_t) \rangle + \langle \partial_z \varphi(\bar{x}_{t}), \eta_t'(x_t-z_t) \rangle \\
	&= \frac{\diff A_t}{\diff t} \left(f(x_t)-f(x_*)\right) + \frac{\diff B_t}{\diff t} \frac{1}{2}\Vert z_t - x_* \Vert^2 + A_t \eta_t \langle \nabla f(x_t), z_t - x_t \rangle \\
	&\qquad + B_t \eta_t' \langle z_t-x_*, x_t-z_t \rangle \, .
\end{align*}
Here, we use that as $f$ is $\mu$-strongly convex, 
\begin{equation*}
f(x_t) - f(x_*) \leq \langle \nabla f(x_t), x_t- x_* \rangle - \frac{\mu}{2} \Vert x_t - x_* \Vert^2 \, , 
\end{equation*}
and the simple bound
\begin{align*}
\langle z_t-x_*, x_t-z_t \rangle &= \langle z_t-x_*, x_t-x_* \rangle - \Vert z_t - x_* \Vert^2 \leq \Vert z_t - x_* \Vert \Vert x_t - x_* \Vert - \Vert z_t - x_* \Vert^2 \\
&\leq \frac{1}{2} \left(\Vert z_t - x_* \Vert^2  + \Vert x_t - x_* \Vert^2 \right) - \Vert z_t - x_* \Vert^2 = \frac{1}{2} \left(\Vert x_t - x_* \Vert^2 -\Vert z_t - x_* \Vert^2 \right) \, .
\end{align*}
This gives
\begin{align}
\langle \nabla \varphi(\bar{x}_{t}), b(\bar{x}_{t}) \rangle &\leq
\left(\frac{\diff A_t}{\diff t} - A_t \eta_t\right)  \langle \nabla f(x_t), x_t- x_* \rangle + \left(B_t \eta_t'- \frac{\diff A_t}{\diff t} \mu\right) \frac{1}{2}  \Vert x_t - x_* \Vert^2 \label{eq:aux-4}\\&+ \left(\frac{\diff B_t }{\diff t}- B_t\eta_t'\right) \frac{1}{2}\Vert z_t - x_* \Vert^2
+ A_t \eta_t \langle \nabla f(x_t), z_t - x_* \rangle \, . \label{eq:aux-5}
\end{align}
Further, 
\begin{align*}
\varphi(\bar{x}_{t}+G(\bar{x}_{t}) ) - \varphi(\bar{x}_{t}) &= A_t\left(f(x_t-\gamma_t \nabla f(x_t))-f(x_t)\right) \\
&\qquad+ \frac{B_t}{2} \left(\Vert (z_t-x_*) - \gamma_t' \nabla f(x_t) \Vert^2 - \Vert z_t-x_*\Vert^2\right) \, .
\end{align*}
As $f$ is $L$-smooth, 
\begin{align*}
f(x_t-\gamma_t \nabla f(x_t))-f(x_t) &\leq \langle \nabla f(x_t) , - \gamma_t \nabla f(x_t) \rangle + \frac{L}{2} \Vert \gamma_t \nabla f(x_t) \Vert^2 \\
&= -\gamma_t \left(2 - {L\gamma_t}\right) \frac{1}{2}\Vert \nabla f(x_t) \Vert^2 \, .
\end{align*}
This gives
\begin{align}
\varphi(\bar{x}_{t}+G(\bar{x}_{t}) ) - \varphi(\bar{x}_{t}) &\leq
\left( B_t \gamma_t'^2 -A_t \gamma_t \left(2 - {L\gamma_t}\right)\right)  \frac{1}{2}\Vert \nabla f(x_t) \Vert^2 - B_t \gamma_t' \langle \nabla f(x_t), z_t- x_* \rangle  \, . \label{eq:aux-6}
\end{align}
Finally, combining \eqref{eq:aux-4}-\eqref{eq:aux-5} with \eqref{eq:aux-6}, we obtain
\begin{align}
I_t  &\leq \left(\frac{\diff A_t}{\diff t} - A_t \eta_t\right)  \langle \nabla f(x_t), x_t- x_* \rangle + \left(\frac{\diff B_t }{\diff t}- B_t\eta_t'\right) \frac{1}{2} \Vert z_t - x_* \Vert^2 \label{eq:aux-7}\\
&\qquad+ (A_t \eta_t- B_t \gamma_t')  \langle \nabla f(x_t), z_t- x_* \rangle + \left(B_t \eta_t'- \frac{\diff A_t}{\diff t} \mu\right) \frac{1}{2}\Vert x_t - x_* \Vert^2\\ 
&\qquad   +\left( B_t \gamma_t'^2 -A_t \gamma_t \left(2 - L\gamma_t\right)\right) \frac{1}{2} \Vert \nabla f(x_t) \Vert^2 \, . \label{eq:aux-8}
\end{align}
Remember that $I_t \leq 0$ is a sufficient condition for $\phi_t$ to be a supermartingale. Here, we choose the parameters $\eta_t, \eta_t', \gamma_t, \gamma_t', t \geq 0$, so that all prefactors are $0$. We start by taking $\gamma_t \equiv \frac{1}{L}$ (other choices $\gamma_t < \frac{2}{L}$ could be possible but would give similar results) and we want to satisfy
\begin{align*}
&\frac{\diff A_t}{\diff t} = A_t \eta_t \, , &&\frac{\diff B_t }{\diff t}= B_t\eta_t' 
&&A_t \eta_t =  B_t \gamma_t' \, , &&B_t \eta_t' = \frac{\diff A_t }{\diff t} \mu \, , 
&&B_t \gamma_t'^2 = \frac{A_t}{L} \, .
\end{align*}
To satisfy the last equation, we choose 
\begin{equation}
\label{eq:formula-gamma_t'}
\gamma_t' = \sqrt{\frac{A_t}{LB_t}} \, . 
\end{equation}
To satisfy the third equation, we choose
\begin{equation}
\label{eq:formula-eta_t}
\eta_t = \frac{ B_t \gamma_t'}{A_t} = \sqrt{\frac{2B_t}{LA_t }} \, . 
\end{equation}
To satisfy the fourth equation, we choose
\begin{equation}
\label{eq:formula-eta_t'}
\eta_t' = \frac{\diff A_t}{\diff t} \frac{\mu}{ B_t} = \frac{A_t \eta_t \mu}{ B_t} = \mu \sqrt{\frac{A_t}{L B_t}} \, . 
\end{equation}
Having now all parameters $\eta_t, \eta_t', \gamma_t, \gamma_t'$ constrained, we now have that $\phi_t$ is Lyapunov if
\begin{align*}
&\frac{\diff A_t}{\diff t} = A_t \eta_t = \sqrt{\frac{A_t B_t}{L}} \, , &&\frac{\diff B_t}{\diff t} = B_t \eta'_t = \mu \sqrt{\frac{A_t B_t}{L}} \, .
\end{align*}
This only leaves the choice of the initialization $(A_0, B_0)$ as free: both the algorithm and the Lyapunov depend on it. (Actually, only the relative value $A_0/B_0$ matters.) Instead of solving the above system of two coupled non-linear ODEs, it is convenient to turn them into a single second-order linear ODE:
\begin{align}
\label{eq:coupled-first-order}
	&\frac{\diff}{\diff t}\left(\sqrt{A_t}\right) = \frac{1}{2\sqrt{A_t}} \frac{\diff A_t}{\diff t} = \frac{1}{2}\sqrt{\frac{B_t}{L}} \, , &&\frac{\diff}{\diff t}\left(\sqrt{B_t}\right) = \frac{1}{2\sqrt{B_t}} \frac{\diff B_t}{\diff t} = \frac{\mu}{2} \sqrt{\frac{A_t}{L}} \, . 
\end{align}
This can also be restated as
\begin{align}
\label{eq:second-order}
&\frac{\diff^2}{\diff t^2} \left(\sqrt{A_t}\right) = \frac{\mu}{4L} \sqrt{A_t} \, , &&\sqrt{B_t} = 2\sqrt{L} \frac{\diff }{\diff t}\left(\sqrt{A_t}\right) \, .
\end{align}

\subsubsection{Proof of the first part (convex case)}

We now assume $\mu = 0$, and we choose the solution such that $A_0 = 
0$ and $B_0 = 1$. From \eqref{eq:coupled-first-order}, we have $\frac{\diff}{\diff t}\left(\sqrt{B_t}\right) = 0$, thus $B_t \equiv 1$, and $\frac{\diff}{\diff t}\left(\sqrt{A_t}\right) = \frac{1}{2\sqrt{L}}$, thus $\sqrt{A_t} = \frac{t}{2\sqrt{L}}$. 
The parameters of the algorithm are given by \eqref{eq:formula-gamma_t'}-\eqref{eq:formula-eta_t'}: $\eta_t = \frac{2}{t}$, $\eta_t' = 0$, $\gamma'_t = \frac{t}{2L}$ (and we had chosen $\gamma_t = \frac{1}{L}$).

From the fact that $\phi_t$ is a supermartingale, we obtain that the associated algorithm satisfies 
\begin{equation*}
	\E f(x_t) - f(x_*) \leq \frac{\E \phi_t}{A_t} \leq \frac{\phi_0}{A_t} = \frac{2L\Vert z_0 - x_* \Vert^2}{t^2} \, .
\end{equation*} 
This proves the first part of Theorem \ref{thm:continuized}. 

Further, one can apply martingale stopping Theorem \ref{thm:stopping} to the supermartingale $\phi_t$ with the stopping time $T_k$ to obtain 
\begin{align*}
	\E \left[ A_{T_k} \left(f(\tilde{x}_k) - f(x_*) \right) \right]  = \E \left[ A_{T_k} \left( f(x_{T_k}) - f(x_*) \right) \right] \leq \E \phi_{T_k} \leq \phi_0 = \Vert z_0 - x_* \Vert^2 \, . 
\end{align*}
This proves the formula of Theorem \ref{thm:discretization}.\ref{it:cvx}.

\subsubsection{Proof of the second part (strongly convex case)}

We now assume $\mu > 0$. We consider the solution of \eqref{eq:second-order} that is exponential:
\begin{align*}
&\sqrt{A_t} = \sqrt{A_0} \exp\left(\frac{1}{2} \sqrt{\frac{\mu}{L}}t\right) \, , && \sqrt{B_t } = \sqrt{A_0} \sqrt{\mu} \exp\left(\frac{1}{2} \sqrt{\frac{\mu}{L}}t\right) \, .
\end{align*}
The parameters of the algorithm are given by \eqref{eq:formula-gamma_t'}-\eqref{eq:formula-eta_t'}: $\eta_t = \eta_t' =  \sqrt{\frac{\mu}{L}}$, $\gamma'_t = \frac{1}{\sqrt{\mu L}}$ (and we had chosen $\gamma_t = \frac{1}{L}$). 

From the fact that $\phi_t$ is a supermartingale, we obtain that the associated algorithm satisfies 
\begin{align*}
\E f(x_t) - f(x_*) \leq \frac{\E \phi_t}{A_t} \leq \frac{\phi_0}{A_t} &= \frac{A_0 (f(x_0)-f(x_*)) + A_0 \frac{\mu}{2}\Vert z_0 - x_* \Vert^2}{A_t} \\&= \left(f(x_0)-f(x_*) + \frac{\mu}{2}\Vert z_0 - x_* \Vert^2\right) \exp\left(- \sqrt{\frac{\mu}{L}}t\right)\, .
\end{align*}
This proves the second part of Theorem \ref{thm:continuized}. Similarly to above, one can also apply the martingale stopping theorem to prove the formula of Theorem \ref{thm:discretization}.\ref{it:str-cvx}. 

\begin{remark}
	In the above derivation, in both the convex and strongly convex cases, we choose a particular solution of \eqref{eq:second-order}, while several solutions are possible. In the convex case, we make the choice $A_0 = 0$ to have a succinct bound that does not depend on $f(x_0) - f(x_*)$. More importantly, in the strongly convex case, we choose the solution that satisfies the relation $\sqrt{\mu} \sqrt{A_t} = \sqrt{B_t}$, which implies that $\eta_t, \eta_t', \gamma_t'$, are constant functions of $t$, and $\eta_t = \eta_t'$. These conditions help solving in closed form the continuous part of the process 
	\begin{align*}
	&\diff x_t = \eta_t (z_t - x_t) \diff t \, ,  \\
	&\diff z_t = \eta_t'(x_t - z_t) \diff t \, ,
	\end{align*}
	which is crucial if we want to have a discrete implementation of our method (for more details, see Theorem \ref{thm:discretization} and its proof). However, in the strongly convex case, considering other solutions would be interesting, for instance to have an algorithm converging to the convex one as $\mu \to 0$. 
	\end{remark}

\subsection{With additive noise: proof of Theorem \ref{thm:additive-noise}}
\label{ap:proof-additive}

The proof of this theorem is along the same lines as the proof of Theorem \ref{thm:continuized} above. Here, we only give the major differences. 

We analyze the convergence of the continuized stochastic iteration \eqref{eq:continuized-sgd-additive-1}-\eqref{eq:continuized-sgd-additive-2}, that we recall for the reader's convenience:
\begin{align*}
\diff x_t = \eta_t (z_t - x_t) \diff t - \gamma_t \int_{\Xi}\nabla f(x_t, \xi) \diff N(t,\xi) \, , \\
\diff z_t = \eta_t' (x_t - z_t) \diff t- \gamma_t'  \int_{\Xi}\nabla f(x_t, \xi) \diff N(t,\xi) \, . 
\end{align*}
In this setting, we loose the property that
\begin{equation*}
\phi_t = A_t\left(f(x_t)-f_*\right) + \frac{B_t}{2}\Vert z_t - x_* \Vert^2  
\end{equation*}
is a supermartingale. However, we bound the increase of $\phi_t$.

The process $\bar{x}_t = (t,x_t,z_t)$ satisfies the equation 
\begin{align*}
&\diff \bar{x}_t = b(\bar{x}_t) \diff t + \int_{\Xi}G(\bar{x}_t,\xi) \diff N(t,\xi), && b(\bar{x}_t) = \begin{pmatrix}
1 \\
\eta_t(z_t-x_t) \\
\eta_t'(x_t-z_t) 
\end{pmatrix}, &&G(\bar{x}_t,\xi) = \begin{pmatrix}
0 \\
-\gamma_t \nabla f(x_t,\xi) \\
-\gamma_t' \nabla f(x_t,\xi)
\end{pmatrix}.
\end{align*}
We apply Proposition \ref{prop:ito} to $\phi_t = \varphi(\bar{x}_t) = \varphi(t,x_t,z_t)$
and obtain
\begin{align}
\label{eq:decomposition-phi-additive}
\phi_t = \phi_0 + \int_{0}^{t} I_s \diff s + M_t \, ,
\end{align}
where $M_t$ is a martingale and
\begin{align*}
I_t = \langle \nabla \varphi(\bar{x}_t), b(\bar{x}_t) \rangle +\E_\xi\varphi(\bar{x}_{t}+G(\bar{x}_{t}, \xi) ) - \varphi(\bar{x}_{t}) \, . 
\end{align*}
The computation of the first term remains the same: the inequality \eqref{eq:aux-4}-\eqref{eq:aux-5} holds. The computation of the second term becomes 
\begin{align*}
\E_\xi \varphi(\bar{x}_{t}+G(\bar{x}_{t},\xi) ) - \varphi(\bar{x}_{t}) &= A_t\left(\E_\xi f(x_t-\gamma_t \nabla f(x_t,\xi))-f(x_t)\right) \\
&\qquad+ \frac{B_t}{2} \left(\E_\xi\Vert (z_t-x_*) - \gamma_t' \nabla f(x_t,\xi) \Vert^2 - \Vert z_t-x_*\Vert^2\right) \, .
\end{align*}
As $f$ is $L$-smooth, 
\begin{align*}
f(x_t-\gamma_t \nabla f(x_t,\xi))-f(x_t) &\leq \langle \nabla f(x_t) , - \gamma_t \nabla f(x_t,\xi) \rangle + \frac{L}{2} \Vert \gamma_t \nabla f(x_t,\xi) \Vert^2  \, , \\
\E_\xi f(x_t-\gamma_t \nabla f(x_t,\xi))-f(x_t) &\leq \langle \nabla f(x_t) , - \gamma_t \E_\xi\nabla f(x_t,\xi) \rangle + \frac{L}{2} \E_\xi\Vert \gamma_t \nabla f(x_t,\xi) \Vert^2  \, .
\end{align*}
By assumptions \eqref{eq:unbiased} and \eqref{eq:bounded-variance}, the stochastic gradient $\nabla f(x,\xi)$ is unbiased and has a variance bounded by $\sigma^2$, which implies $\E_\xi\Vert  \nabla f(x_t,\xi) \Vert^2 \leq \Vert   \nabla f(x_t) \Vert^2 + \sigma^2$. Thus 
\begin{align*}
\E_\xi f(x_t-\gamma_t \nabla f(x_t, \xi ))-f(x_t) 
&\leq -\gamma_t \left(2 - L\gamma_t\right)  \frac{1}{2}\Vert \nabla f(x_t) \Vert^2 + \sigma^2 \frac{L\gamma_t^2}{2 } \, .
\end{align*}
Similarly, 
\begin{align*}
	\E_\xi\Vert (z_t-x_*) - \gamma_t' \nabla f(x_t,\xi) \Vert^2 - \Vert z_t-x_*\Vert^2 &= -2 \gamma_t' \langle \E_\xi \nabla f(x_t,\xi), z_t-x_* \rangle + \gamma_t'^2 \E_\xi \Vert  \nabla f(x_t,\xi) \Vert^2  \\
	&\leq -2 \gamma_t' \langle  \nabla f(x_t), z_t-x_* \rangle + \gamma_t'^2  \Vert  \nabla f(x_t) \Vert^2 + \sigma^2 \gamma_t'^2 \, .
\end{align*}
This gives
\begin{align*}
\varphi(\bar{x}_{t}+G(\bar{x}_{t}) ) - \varphi(\bar{x}_{t}) &\leq
\left( B_t \gamma_t'^2 -A_t \gamma_t \left(2 - L\gamma_t\right)\right) \frac{1}{2} \Vert \nabla f(x_t) \Vert^2 - B_t \gamma_t' \langle \nabla f(x_t), z_t- x_* \rangle \\
&\qquad + \frac{\sigma^2}{2} \left(A_t L\gamma_t^2 + B_t \gamma_t'^2\right) \, .
\end{align*}
Combining the bounds, we obtain 
\begin{align*}
I_t  &\leq \left(\frac{\diff A_t}{\diff t} - A_t \eta_t\right)  \langle \nabla f(x_t), x_t- x_* \rangle + \left(\frac{\diff B_t }{\diff t}- B_t\eta_t'\right) \frac{1}{2}\Vert z_t - x_* \Vert^2 \\
&\qquad+ (A_t \eta_t- B_t \gamma_t')  \langle \nabla f(x_t), z_t- x_* \rangle + \left(B_t \eta_t'- \frac{\diff A_t}{\diff t} \mu\right) \frac{1}{2}\Vert x_t - x_* \Vert^2\\ 
&\qquad   +\left( B_t \gamma_t'^2 -A_t \gamma_t \left(2 - L\gamma_t\right)\right) \frac{1}{2} \Vert \nabla f(x_t) \Vert^2 + \frac{\sigma^2}{2} \left(A_t L \gamma_t^2 + B_t \gamma_t'^2 \right)\, ,
\end{align*}
which is an additive perturbation of the bound \eqref{eq:aux-7}-\eqref{eq:aux-8} in the noiseless case, with a perturbation proportional to $\sigma^2$. The choices of parameters of Theorem \ref{thm:continuized} cancel all first five prefactors, and satisfy $\gamma_t = \frac{1}{L}$, $A_t L \gamma_t^2 = B_t \gamma_t'^2$. We thus obtain 
\begin{align*}
I_t \leq \sigma^2 \frac{A_t}{L} \, . 
\end{align*}
This bound controls the increase of $\phi_t$. Using the decomposition \eqref{eq:decomposition-phi-additive}, we obtain
\begin{align*}
\E f(x_t) - f(x_*) &\leq \frac{\E \phi_t}{A_t} \leq \frac{\phi_0}{A_t} + \frac{\int_{0}^{t}\E I_s \diff s}{A_t}  \\
&\leq \frac{A_0 (f(x_0)-f(x_*)) + B_0 \Vert z_0 - x_* \Vert^2}{A_t} + \frac{\sigma^2}{L}\frac{\int_{0}^{t} A_s \diff s}{A_t} \, .
\end{align*} 

\subsubsection{Proof of the first part (convex case)}

In this case, $A_t = \frac{t^2}{2L}$ and $B_0 = 1$. Thus $\int_{0}^{t} A_s \diff s = \frac{1}{2L} \frac{t^3}{3}$. Thus
		\begin{align*}
\E f(x_t) - f(x_*) \leq \frac{2L\Vert z_0 -x_* \Vert^2}{t^2} + \sigma^2 \frac{t}{3L} \, .
\end{align*} 

\subsubsection{Proof of the second part (strongly convex case)}

In this case, $A_t = A_0 \exp\left(\sqrt{\frac{\mu}{L}}t\right)$ and $B_0 = A_0 \frac{\mu}{2}$. Thus $\int_{0}^{t} A_s \diff s \leq A_0 \sqrt{\frac{\mu}{L}}^{-1}\exp\left(\sqrt{\frac{\mu}{L}}t\right) = \sqrt{\frac{L}{\mu}} A_t$. Thus 
\begin{align*}
\E f(x_t) - f(x_*) \leq \left(f(x_0) - f(x_*) + \frac{\mu}{2} \Vert z_0 - x_* \Vert^2\right) \exp \left(-\sqrt{\frac{\mu}{L}}t\right) + \sigma^2 \frac{1}{\sqrt{\mu L}} \, .
\end{align*}

\subsection{With Pure Multiplicative Noise: Proof of Theorem \ref{thm:multiplicative-noise}}

The proof of this theorem mimics the proof of Theorem \ref{thm:continuized}, with a slightly different Lyapunov function. 

We recall that in Section \ref{sec:stochastic}, the function $f$ is of the form:
\begin{equation*}
    \forall x\in\R^d, f(x)=\esp{\frac{1}{2}(\langle a,x\rangle -b)^2},
\end{equation*}
where $\xi=(a,b)\in\R^d\times \R$ is of law $\cP$. Thanks to the \emph{noiseless assumption}, for $H=\esp{aa^{\top}}$, we also have:
\begin{equation*}
    \forall x\in\R^d, f(x)=\frac{1}{2}\NRM{x-x_*}_H^2.
\end{equation*}
The Lyapunov function studied in the proof of Theorem \ref{thm:continuized} would then write as, for $t\in\R_{\geq0}$:
\begin{equation*}
    \phi_t=\frac{A_t}{2}\NRM{x_t-x_*}_H^2 +\frac{B_t}{2}\NRM{z_t-x_*}^2.
\end{equation*}
An acceleration of stochastic gradient descent using this Lyapunov function has been done by \citet{vaswani2019fast}. In order to have an analysis similar to Nesterov acceleration, the authors make a strong growth condition, which is too strong for many stochastic gradient problems and for our application to gossip algorithms. Instead, our analysis requires a bounded statistical condition number~$\Tilde{\kappa}$, and performs a shift in terms of dependency over $H$: $\NRM{x-x_*}_H^2$ becomes $\NRM{x-x_*}^2$, and $\NRM{z_t-x_*}^2$ becomes $\NRM{z_t-x_*}_{H^{-1}}^2$. The new Lyapunov function writes:
\begin{equation*}
        \phi_t=\frac{A_t}{2}\NRM{x_t-x_*}^2 +\frac{B_t}{2}\NRM{z_t-x_*}_{H^{-1}}^2.
\end{equation*}
As in Theorem \ref{thm:continuized}, the proof consists in proving that for carefully chosen parameters, $\phi_t$ is a supermatingale.
The process $\bar{x}_t = (t,x_t,z_t)$ satisfies the equation 
\begin{align*}
&\diff \bar{x}_t = b(\bar{x}_t) \diff t + \int_{\Xi}G(\bar{x}_t,\xi) \diff N(t,\xi), && b(\bar{x}_t) = \begin{pmatrix}
1 \\
\eta_t(z_t-x_t) \\
\eta_t'(x_t-z_t) 
\end{pmatrix}, &&G(\bar{x}_t,\xi) = \begin{pmatrix}
0 \\
-\gamma_t \nabla f(x_t,\xi) \\
-\gamma_t' \nabla f(x_t,\xi)
\end{pmatrix}.
\end{align*}
We apply Proposition \ref{prop:ito} to $\phi_t = \varphi(\bar{x}_t) = \varphi(t,x_t,z_t)$
and obtain:
\begin{align*}
\phi_t = \phi_0 + \int_{0}^{t} I_s \diff s + M_t \, ,
\end{align*}
where $M_t$ is a martingale and
\begin{align*}
I_t = \langle \nabla \varphi(\bar{x}_t), b(\bar{x}_t) \rangle +\E_\xi\varphi(\bar{x}_{t}+G(\bar{x}_{t}, \xi) ) - \varphi(\bar{x}_{t}) \, . 
\end{align*}
Since the Lyapunov function is not the same, we need to explicit here each term. The first term writes:
\begin{align*}
    \langle \nabla \varphi(\bar{x}_t), b(\bar{x}_t) \rangle = & \frac{1}{2}\frac{\diff A_t}{\diff t} \NRM{x_t-x_*}^2 + \frac{1}{2}\frac{\diff B_t}{\diff t} \NRM{z_t-x_*}^2_{H^{-1}}\\
    & + A_t\eta_t\langle x_t-x_*, z_t-x_t \rangle + B_t\eta'_t \langle H^{-1} (z_t - x_*),x_t-z_t\rangle.
\end{align*}
Mimicking the proof of Theorem \ref{thm:continuized}, we write
\begin{align*}
    \frac{1}{2}\NRM{x_t-x_*}^2 \leq \NRM{x_t-x_*}^2 - \frac{\mu}{2}\NRM{x_t-x_*}_{H^{-1}}^2,
\end{align*}
and
\begin{align*}
    \langle H^{-1} (z_t - x_*),x_t-z_t\rangle &= \langle z_t-x_*,x_t-x_*\rangle_{H^{-1}} - \NRM{z_t-x_*}^2_{H^{-1}}\\
    &\leq \frac{1}{2}\big( \NRM{x_t-x_*}^2_{H^{-1}}-\NRM{z_t-x_*}^2_{H^{-1}}\big).
\end{align*}
Hence:
\begin{align*}
    \langle \nabla \varphi(\bar{x}_t), b(\bar{x}_t) \rangle &\leq \frac{\diff A_t}{\diff t}\NRM{x_t-x_*}^2 +\left(B_t\eta'_t -\frac{\diff A_t}{\diff t}\mu\right)\frac{1}{2}\NRM{x_t-x_*}_{H^{-1}}^2\\
    &\qquad+ \left(\frac{\diff B_t}{\diff t}- B_t\eta'_t\right) \frac{1}{2}\NRM{z_t-x_*}_{H^{-1}}^2 + A_t\eta_t \langle x_t-x_*,z_t-x_t\rangle \, .
\end{align*}
Further, 
\begin{align*}
\varphi(\bar{x}_{t}+G(\bar{x}_{t}) ) - \varphi(\bar{x}_{t}) &= \frac{A_t}{2}\left(\NRM{x_t-\gamma_t \nabla f(x_t,\xi)-x_*}^2-\NRM{x_t-x_*}^2\right) \\
&\qquad+ \frac{B_t}{2} \left(\Vert (z_t-x_*) - \gamma_t' \nabla f(x_t,\xi) \Vert^2_{H^{-1}} - \Vert z_t-x_*\Vert^2_{H^{-1}}\right) \, .
\end{align*}
Then, expanding and taking expectation over $\xi$ of the first term:
\begin{align*}
    \E_{\xi}\left[\frac{1}{2}\NRM{x_t-\gamma_t \nabla f(x_t,\xi)-x_*}^2-\frac{1}{2}\NRM{x_t-x_*}^2\right] &= \frac{\gamma_t^2}{2}\E_{\xi}\left[\NRM{\nabla f(x_t,\xi)}^2\right] - \gamma_t\langle H(x_t-x_*),x_t-x_*\rangle\\
    &\leq \left(\frac{R^2\gamma_t^2}{2} -\gamma_t\right)\NRM{x_t-x_*}^2_H,
\end{align*}
where we used the definition of $R^2$ in Equation \eqref{eq:R_squared}:  
\begin{align*}
    \E_{\xi}\left[\NRM{\nabla f(x_t,\xi)}^2\right]&=(x_t-x_*)^{\top}\esp{ a a^{\top} a a^{\top}}(x_t-x_*)\\
    &= (x_t-x_*)^{\top} \esp{\NRM{a}^2aa^{\top}}(x_t-x_*)\\
    &\leq R^2 (x_t-x_*)^{\top} H(x_t-x_*).
\end{align*}
The second term writes:
\begin{align*}
    \frac{1}{2}\E_{\xi}\left[\Vert (z_t-x_*) - \gamma_t' \nabla f(x_t,\xi) \Vert^2_{H^{-1}} - \Vert z_t-x_*\Vert^2_{H^{-1}}\right] &= \frac{{\gamma'_t}^2}{2}\E_{\xi}\left[ \NRM{\nabla f(x_t,\xi)}^2_{H^{-1}}\right] \\
    &\qquad- \gamma'_t\langle x_t-x_*,z_t-x_*\rangle \\
    &\leq \frac{\Tilde{\kappa}{\gamma'_t}^2}{2}\NRM{x_t-x_*}^2_{H} \\
    &\qquad- \gamma'_t\langle x_t-x_*,z_t-x_*\rangle,
\end{align*}
where we used the definition of $\Tilde{\kappa}$ in Equation \eqref{eq:kappa_tilde}:
\begin{align*}
    \E_{\xi}\left[ \NRM{\nabla f(x_t,\xi)}^2_{H^{-1}}\right]&= (x_t-x_*)^\top \esp{aa^{\top}H^{-1}aa^{\top}}(x_t-x_*)\\
    &=(x_t - x_*)^{\top}\esp{a \NRM{a}_{H^{-1}}^2a^{\top}}(x_t-x_*)\\
    &\leq \Tilde{\kappa}(x_t-x_*)^{\top}H(x_t-x_*).
\end{align*}
Combining these inequalities gives the following upper-bound on $I_t$:
\begin{align*}
    I_t  &\leq \left(\frac{\diff A_t}{\diff t}-A_t \eta_t\right)  \NRM{x_t-x_*}^2 + \left(\frac{\diff B_t }{\diff t}- B_t\eta_t'\right) \frac{1}{2} \Vert z_t - x_* \Vert^2_{H^{-1}} \\
&\qquad+  (A_t\eta_t - B_t \gamma_t')\langle x_t-x_*, z_t- x_* \rangle + \left(B_t \eta_t'- \frac{\diff A_t}{\diff t} \mu\right) \frac{1}{2}\Vert x_t - x_* \Vert^2_{H^{-1}}\\ 
&\qquad   +\left(\Tilde{\kappa} B_t \gamma_t'^2 -A_t \gamma_t \left(2 - R^2\gamma_t\right)\right) \frac{1}{2} \NRM{x_t-x_*}^2_{H}
\end{align*}
Since $I_t\leq0$ is still a sufficient condition for $\phi_t$ to be a supermartingale, we choose parameters such that all prefactors are equal to 0. We first take $\gamma_t=\frac{1}{R^2}$, and we want to satisfy:
\begin{align*}
&\frac{\diff A_t}{\diff t} = A_t \eta_t \, , &&\frac{\diff B_t }{\diff t}= B_t\eta_t' 
&&A_t \eta_t =  B_t \gamma_t' \, , &&B_t \eta_t' = \frac{\diff A_t }{\diff t} \mu \, , 
&&B_t \gamma_t'^2 = \frac{A_t}{\Tilde{\kappa}R^2} \, .
\end{align*}
To satisfy that last equality, we choose:
\begin{equation*}
    \gamma'_t=\sqrt{\frac{A_t}{B_t\Tilde{\kappa}R^2}}.
\end{equation*}
The rest of the proof then follows just as in the proof of Theorem \ref{ap:proof-continuized}.

\section{Proof of Theorem \ref{thm:discretization}}
\label{ap:proof-thm-discretization}

By integrating the ODE
\begin{align*}
	&\diff x_t = \eta_t (z_t - x_t) \diff t \, ,  \\
	&\diff z_t = \eta_t'(x_t - z_t) \diff t \, ,
\end{align*}
between $T_k$ and $T_{k+1}-$, we obtain that there exists $\tau_k, \tau_k''$, such that 
\begin{align}
	&\tilde{y}_k = x_{T_{k+1}-} = x_{T_k} + \tau_k(z_{T_k} - x_{T_k}) = \tilde{x}_k + \tau_k (\tilde{z}_k - \tilde{x}_k) \, , \label{eq:aux-1}\\
	&z_{T_{k+1}-} = z_{T_k} + \tau_k''(x_{T_k} - z_{T_k}) = \tilde{z}_k + \tau_k'' (\tilde{x}_k - \tilde{z}_k) \, . \nonumber
\end{align}
From the first equation, we have $\tilde{x}_k = \frac{1}{1 - \tau_k} \left(\tilde{y}_k - \tau_k \tilde{z}_k\right)$, which gives by substitution in the second equation,
\begin{align*}
	z_{T_{k+1}-} &= \tilde{z}_k + \tau_k'' \left(\frac{1}{1 - \tau_k} \left(\tilde{y}_k - \tau_k \tilde{z}_k\right) - \tilde{z}_k\right) \\
	&= \tilde{z}_k + \tau_k' (\tilde{y}_k - \tilde{z}_k) \, ,
\end{align*}
where $\tau_k' = \frac{\tau_k''}{1 - \tau_k}$.

Further, from \eqref{eq:gradient-1}-\eqref{eq:gradient-2}, we obtain the equations 
\begin{align}
	&\tilde{x}_{k+1} = x_{T_{k+1}} = x_{T_{k+1}-} - \gamma_{T_{k+1}} \nabla f (x_{T_{k+1}-}) = \tilde{y}_k - \gamma_{T_{k+1}} \nabla f (\tilde{y}_k) \, , \label{eq:aux-2}\\
	&\tilde{z}_{k+1} = z_{T_{k+1}} = z_{T_{k+1}-} - \gamma'_{T_{k+1}} \nabla f (x_{T_{k+1}-}) = \tilde{z}_k + \tau_k' (\tilde{y}_k - \tilde{z}_k) - \gamma'_{T_{k+1}} \nabla f (\tilde{y}_k) \, .  \label{eq:aux-3}
\end{align}
The stated equation \eqref{eq:discretization-1}-\eqref{eq:discretization-3} are the combination of \eqref{eq:aux-1}, \eqref{eq:aux-2} and \eqref{eq:aux-3}. 

\begin{enumerate}
	\item The parameters of Theorem \ref{thm:continuized}.(\ref{it:cvx}) are $\eta_t = \frac{2}{t}, \eta_t' = 0, \gamma_t = \frac{1}{L}$ and $\gamma_t' = \frac{t}{2L}$. In this case, the ODE 
	\begin{align*}
		&\diff x_t = \eta_t (z_t - x_t) \diff t = \frac{2}{t} (z_t - x_t) \diff t \, ,  \\
		&\diff z_t = \eta_t'(x_t - z_t) \diff t = 0\, ,
	\end{align*}
	can be integrated in closed form: for $t \geq t_0$,
	\begin{align*}
		&x_t = z_{t_0} + \left(\frac{t_0}{t}\right)^2 (x_{t_0}-z_{t_0})= x_{t_0} + \left(1 - \left(\frac{t_0}{t}\right)^2\right)(z_{t_0}-x_{t_0}) \, ,  \\
		&z_t = z_{t_0}\, .
	\end{align*} 
	In particular, taking $t_0 = T_k$, $t = T_{k+1}-$, we obtain $\tau_k = 1 - \left(\frac{T_k}{T_{k+1}}\right)^2$, $\tau_k'' = 0$ and thus $\tau_k' = \frac{\tau_k''}{1 - \tau_k} = 0$. Finally, $\tilde{\gamma}_k = \gamma_{T_k} = \frac{1}{L}$ and $\tilde{\gamma}'_k = \gamma_{T_k}' = \frac{T_k}{2L}$. 
	\item The parameters of Theorem \ref{thm:continuized}.(\ref{it:str-cvx}) are $\eta_t = \eta_t' \equiv \sqrt{\frac{\mu}{L}}, \gamma_t \equiv \frac{1}{L}$ and $\gamma_t' \equiv \frac{1}{\sqrt{\mu L }}$. In this case, the ODE 
	\begin{align*}
		&\diff x_t = \eta_t (z_t - x_t) \diff t = \sqrt{\frac{\mu}{L}} (z_t - x_t) \diff t \, ,  \\
		&\diff z_t = \eta_t'(x_t - z_t) \diff t = \sqrt{\frac{\mu}{L}} (x_t - z_t) \diff t \, ,
	\end{align*}
	can also be integrated in closed form: for $t \geq t_0$,
	\begin{align*}
		x_t &= \frac{x_{t_0}+z_{t_0}}{2} + \frac{x_{t_0}-z_{t_0}}{2} \exp\left(-2\sqrt{\frac{\mu}{L}}(t-t_0)\right) \\ 
		&= x_{t_0} + \frac{1}{2} \left(1 - \exp\left(-2\sqrt{\frac{\mu}{L}}(t-t_0)\right)\right)(z_{t_0} - x_{t_0}) \, , \\
		z_t &= \frac{x_{t_0}+z_{t_0}}{2} + \frac{z_{t_0}-x_{t_0}}{2} \exp\left(-2\sqrt{\frac{\mu}{L}}(t-t_0)\right) \\
		&= z_{t_0} + \frac{1}{2} \left(1 - \exp\left(-2\sqrt{\frac{\mu}{L}}(t-t_0)\right)\right)(x_{t_0} - z_{t_0}) \, . 
	\end{align*}
	In particular, taking $t_0 = T_k$, $t = T_{k+1}-$, we obtain $\tau_k = \tau_k'' = \frac{1}{2} \left(1 - \exp\left(-2\sqrt{\frac{\mu}{L}}(T_{k+1}-T_k)\right)\right)$ and thus $\tau_k' = \frac{\tau_k''}{1 - \tau_k} = \tanh\left(\sqrt{\frac{\mu}{L}}(T_{k+1}-T_k)\right)$. Finally, $\tilde{\gamma}_k = \gamma_{T_k} = \frac{1}{L}$ and $\tilde{\gamma}'_k = \gamma_{T_k}' = \frac{1}{\sqrt{\mu L}}$.
\end{enumerate}

\section{Heuristic ODE scaling limit of the continuized acceleration}
\label{ap:scaling-limit}

\subsection{Convex case}

With the choices of parameters of Theorem \ref{thm:continuized}.(\ref{it:cvx}), the continuized acceleration is 
\begin{align*}
\diff x_{t} &= \frac{2}{t} (z_t-x_t) \diff t - \frac{1}{L} \nabla f(x_t)\diff N(t)  \, ,\\
\diff z_{t} &=  - \frac{t}{2L}\nabla f(x_t)\diff N(t) \, .
\end{align*}
The ODE scaling limit is obtained by taking the limit $L \to \infty$ (so that the stepsize $1/L$ vanishes) and rescaling the time $s = t/\sqrt{L}$. Some law of large number argument heuristically gives us that, as $L \to \infty$, $\diff N(t) = \diff N(\sqrt{L}s) \approx \sqrt{L} \diff s$. Thus in the limit, we obtain  
\begin{align*}
\diff x_{s} &= \frac{2}{\sqrt{L}s} (z_s-x_s) \sqrt{L}\diff s - \frac{1}{L} \nabla f(x_s)\sqrt{L} \diff s  \, ,\\
\diff z_{s} &=  - \frac{\sqrt{L}s}{2L}\nabla f(x_s)\sqrt{L}\diff s  \, .
\end{align*}
The second term of the first equation becomes negligible in the limit. Thus the equations simplify to 
\begin{align*}
\frac{\diff x_{s}}{\diff s} &= \frac{2}{s} (z_s-x_s)  \, ,\\
\frac{\diff z_{s}}{\diff s} &=  - \frac{s}{2}\nabla f(x_s) \, .
\end{align*}
Thus 
\begin{align*}
- \frac{s}{2}\nabla f(x_s) = \frac{\diff z_{s}}{\diff s} = \frac{\diff }{\diff s} \left(x_s + \frac{s}{2}\frac{\diff x_{s}}{\diff s} \right) = \frac{\diff x_s }{\diff s} + \frac{1}{2}\frac{\diff x_s }{\diff s}  + \frac{s}{2}\frac{\diff^2 x_s }{\diff s^2} \, ,
\end{align*}
and thus 
\begin{align*}
\frac{\diff^2 x_s }{\diff s^2} + \frac{3}{s}\frac{\diff x_s }{\diff s} + \nabla f(x_s) = 0 \, .
\end{align*}
This is the same limiting ODE as the one found by \citet{su2014differential} for Nesterov acceleration. 

\subsection{Strongly-convex case}

With the choices of parameters of Theorem \ref{thm:continuized}.(\ref{it:str-cvx}), the continuized acceleration is 
\begin{align*}
\diff x_{t} &= \sqrt{\frac{\mu}{L}} (z_t-x_t) \diff t - \frac{1}{L} \nabla f(x_t)\diff N(t)  \, ,\\
\diff z_{t} &=  \sqrt{\frac{\mu}{L}}(x_t-z_t)\diff t - \frac{1}{\sqrt{\mu L }}\nabla f(x_t)\diff N(t) \, .
\end{align*}
Again, we take joint scaling $L \to \infty$, $s = t/\sqrt{L}$, with the approximation $\diff N(t) \approx \sqrt{L} \diff s$. We obtain 
\begin{align*}
\diff x_{s} &= \sqrt{\frac{\mu}{L}} (z_s-x_s) \sqrt{L} \diff s - \frac{1}{L} \nabla f(x_s)\sqrt{L} \diff s  \, ,\\
\diff z_{s} &=  \sqrt{\frac{\mu}{L}}(x_s-z_s)\sqrt{L} \diff s - \frac{1}{\sqrt{\mu L }}\nabla f(x_s)\sqrt{L} \diff s \, .
\end{align*}
As before, the second term of the first equation becomes negligible in the limit. Thus the equations simplify to 
\begin{align}
\frac{\diff x_{s}}{\diff s} &= \sqrt{\mu} (z_s-x_s)  \, , \label{eq:aux-10}\\
\frac{\diff z_{s}}{\diff s} &=  \sqrt{\mu}(x_s-z_s)- \frac{1}{\sqrt{\mu }}\nabla f(x_s) \label{eq:aux-11}\, .
\end{align}
From \eqref{eq:aux-10}, we have $z_s = x_s + \frac{1}{\sqrt{\mu}} \frac{\diff x_{s}}{\diff s}$, and by substitution in \eqref{eq:aux-11}, we obtain 
\begin{align*}
\frac{\diff^2 x_s }{\diff s^2} + 2 \sqrt{\mu}\frac{\diff x_s }{\diff s} + \nabla f(x_s) = 0 \, .
\end{align*}
This is the so-called ``low-resolution'' ODE for Nesterov acceleration of \citet{shi2018understanding}.

\section{Continuized Accelerated Coordinate Descent with arbitrary sampling}
\label{app:coordinate_descent}
In this section, we focus on the following problem:
\begin{equation}
    \min_{x \in \R^d} f(x),
\end{equation}
where $f$ is of the form $f: x\mapsto g(\proj x)$ for some function $g$ and projector $\proj  \in \R^{d \times d}$ (such that $\proj ^2 = \proj $). We further assume that $f$ is smooth with respect to some matrix $M \in \R^{d\times d}$ and $\mu$-strongly convex with respect to $\proj $, \emph{i.e.}:
\begin{align*}
    \frac{\mu}{2}\|x - y\|^2_\proj  \leq f(x) - f(y) - \nabla f(x)^\top (x - y) \leq \frac{1}{2}\|x - y\|^2_M. 
\end{align*}
Note that $\mu$ can be equal to zero, but convergence will be slower in this case.  We analyze the convergence of the following continuized coordinate descent iteration:
\begin{align}
\begin{split}\label{eq:continuized_cd}
\diff x_t = \eta_t (z_t - x_t) \diff t - \gamma_t \int_{\Xi} \frac{\proj_{\xi \xi}}{\cP_\xi}\nabla f(x_t, \xi) \diff N(t,\xi) \, , \\
\diff z_t = \eta_t' (x_t - z_t) \diff t- \gamma_t'  \int_{\Xi}\nabla f(x_t, \xi) \diff N(t,\xi) \,,
\end{split}
\end{align}
where
\begin{equation} \label{eq:coordinate_gradient}
    \nabla f(x_t, \xi) = \frac{1}{\cP_\xi} \nabla_\xi f(x_t),
\end{equation}
with the coordinate gradient $\nabla_\xi f(x_t) = e_\xi e_\xi^\top \nabla f(x_t)$, with $e_\xi \in \R^d$ the unit vector associated with coordinate $\xi \in \{1, \dots, d\}$ and $\cP_\xi$ and $\diff N$ are defined as in Section~\ref{sec:gossip}. Note that these iterations are slightly different from the previous stochastic gradient iteration since the stochastic gradient is not the same for $x_t$ and $z_t$ (same direction but different magnitudes). The following theorem is a continuized version of~\citet{hendrikx2018accelerated}, which is itself largely based on~\citet{neststich2017acdm}.

\begin{theorem}[Continuized acceleration of coordinate descent]
	\label{thm:coord_descent}
	Assume that the stochastic gradients are of the coordinate descent form~\eqref{eq:coordinate_gradient}. Besides, choose parameter $L$ such that: 
	\begin{equation}
	L \geq \max_{\xi \in \Xi} \frac{M_{\xi\xi} \proj_{\xi \xi}}{\cP_\xi^2} \, .    
	\end{equation}
	Then the continuized acceleration~\eqref{eq:continuized_cd} satisfies the following:
		\begin{enumerate}
		\item For $\eta_t = \frac{2}{t}, \eta_t' = 0, \gamma_t = \frac{1}{L}, \gamma_t' = \frac{t}{2L}$,
		\begin{align*}
\E f(x_t) - f(x_*) \leq \frac{2L\Vert z_0 -x_* \Vert^2_\proj }{t^2}\, .
\end{align*}
		\item Assume further that $\mu > 0$ and choose the constant parameters $\eta_t = \eta_t' \equiv \sqrt{\frac{\mu}{L}}$, $\gamma_t \equiv \frac{1}{L}$, $\gamma_t' \equiv \frac{1}{\sqrt{\mu L }}$. Then , 
		\begin{align*}
		\E f(x_t) - f(x_*) \leq \left(f(x_0) - f(x_*) + \frac{\mu}{2} \Vert z_0 - x_* \Vert^2_\proj \right) \exp \left(-\sqrt{\frac{\mu}{L}}t\right) \, .
		\end{align*}	
	\end{enumerate}
\end{theorem}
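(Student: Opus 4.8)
The plan is to reuse, almost verbatim, the supermartingale argument behind Theorems~\ref{thm:continuized} and~\ref{thm:multiplicative-noise}, with the Lyapunov function
\begin{equation*}
\phi_t = A_t\bigl(f(x_t)-f(x_*)\bigr) + \frac{B_t}{2}\NRM{z_t-x_*}_\proj^2 \, ,
\end{equation*}
and to verify that the two different coordinate-gradient scalings appearing in~\eqref{eq:continuized_cd} (the $\proj_{\xi\xi}/\cP_\xi$ factor is present in the $x$-update but absent in the $z$-update) are engineered precisely so that the parameter constraints reduce to the same system as in the deterministic case. Concretely, I would set $\bar{x}_t=(t,x_t,z_t)$, apply Proposition~\ref{prop:ito} to write $\phi_t=\phi_0+\int_0^t I_s\,\diff s + M_t$ with $M_t$ a martingale, and reduce the whole proof to checking $I_t\leq 0$ pointwise, where $I_t=\langle\nabla\varphi(\bar{x}_t),b(\bar{x}_t)\rangle+\E_\xi[\varphi(\bar{x}_t+G(\bar{x}_t,\xi))-\varphi(\bar{x}_t)]$.

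For the drift term I would follow the strongly convex computation of Theorem~\ref{thm:continuized}, using $\mu$-strong convexity with respect to $\proj$, namely $f(x_t)-f(x_*)\leq\langle\nabla f(x_t),x_t-x_*\rangle-\tfrac{\mu}{2}\NRM{x_t-x_*}_\proj^2$, and the cross-term bound $\langle\proj(z_t-x_*),x_t-z_t\rangle\leq\tfrac12(\NRM{x_t-x_*}_\proj^2-\NRM{z_t-x_*}_\proj^2)$. The one structural fact special to this section, which I would isolate early, is that since $f=g\circ\proj$ we have $\nabla f(x)\in\mathrm{range}(\proj)$ and hence $\proj\nabla f(x)=\nabla f(x)$; this lets the $\proj$-weighted inner products against the gradient collapse, e.g. $\langle\proj(z_t-x_*),\nabla f(x_t)\rangle=\langle z_t-x_*,\nabla f(x_t)\rangle$, so that the drift and jump contributions to the $\langle\nabla f(x_t),z_t-x_*\rangle$ coefficient are directly comparable.

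The genuinely new computation is the jump term. Writing $g=\nabla f(x_t)$ and $g_\xi=e_\xi^\top g$, the oracle is $\nabla f(x_t,\xi)=\tfrac{g_\xi}{\cP_\xi}e_\xi$, which is unbiased. For the $x$-update, the $M$-smoothness bound along the coordinate direction gives, after averaging over $\xi\sim\cP$,
\begin{equation*}
\E_\xi\bigl[f\bigl(x_t-\tfrac{\gamma_t\proj_{\xi\xi}}{\cP_\xi}\nabla f(x_t,\xi)\bigr)-f(x_t)\bigr]\leq -\gamma_t\,Q+\frac{\gamma_t^2}{2}\sum_\xi\frac{\proj_{\xi\xi}^2 M_{\xi\xi}}{\cP_\xi^3}g_\xi^2 \, ,\qquad Q:=\sum_\xi\frac{\proj_{\xi\xi}}{\cP_\xi}g_\xi^2 \, ,
\end{equation*}
and here the choice $L\geq\max_\xi M_{\xi\xi}\proj_{\xi\xi}/\cP_\xi^2$ is exactly what yields $\tfrac{\proj_{\xi\xi}^2 M_{\xi\xi}}{\cP_\xi^3}\leq L\tfrac{\proj_{\xi\xi}}{\cP_\xi}$, so with $\gamma_t=1/L$ the bracket is at most $-Q/(2L)$. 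For the $z$-update one computes $\E_\xi\NRM{\nabla f(x_t,\xi)}_\proj^2=Q$ as well, so its jump contributes $\tfrac{B_t\gamma_t'^2}{2}Q-B_t\gamma_t'\langle z_t-x_*,g\rangle$. The coefficient of $Q$ is therefore $\tfrac12\bigl(B_t\gamma_t'^2-A_t/L\bigr)$, identical to the deterministic constraint.

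Collecting coefficients, the five terms $\langle g,x_t-x_*\rangle$, $\langle g,z_t-x_*\rangle$, $\NRM{x_t-x_*}_\proj^2$, $\NRM{z_t-x_*}_\proj^2$ and $Q$ reproduce verbatim the system $\tfrac{\diff A_t}{\diff t}=A_t\eta_t$, $\tfrac{\diff B_t}{\diff t}=B_t\eta_t'$, $A_t\eta_t=B_t\gamma_t'$, $B_t\eta_t'=\mu\,\tfrac{\diff A_t}{\diff t}$, $B_t\gamma_t'^2=A_t/L$ of Theorem~\ref{thm:continuized}. Consequently the same $A_t,B_t$ and the same parameter formulas solve it, and the supermartingale bound $\E f(x_t)-f(x_*)\leq\phi_0/A_t$ yields the two stated rates (with $A_t=t^2/(2L),B_t\equiv1$ in the convex case and the exponential solution in the strongly convex case). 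I expect the main obstacle to be purely the bookkeeping of the jump term: correctly tracking the two distinct gradient scalings, and confirming that the asymmetric $\proj_{\xi\xi}/\cP_\xi$ factor together with the definition of $L$ is what forces the $x$- and $z$-contributions to the $Q$ coefficient to cancel exactly as in the noiseless analysis.
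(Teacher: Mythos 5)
Your proposal is correct and follows essentially the same route as the paper's proof: the same $\proj$-weighted Lyapunov function, the same reduction via Proposition~\ref{prop:ito} to checking $I_t\leq 0$, the same jump-term bookkeeping in which the definition of $L$ and the asymmetric $\proj_{\xi\xi}/\cP_\xi$ scaling make the coefficient of $\E_\xi\|\nabla f(x_t,\xi)\|_\proj^2$ reduce to the deterministic constraint $B_t\gamma_t'^2 = A_t/L$, and the same conclusion via the parameter system of Theorem~\ref{thm:continuized}. Your explicit isolation of $\proj\nabla f(x)=\nabla f(x)$ (which the paper uses only implicitly when passing from $\langle \E_\xi \proj\nabla f(x_t,\xi),z_t-x_*\rangle$ to $\langle\nabla f(x_t),z_t-x_*\rangle$) is a welcome clarification, but it does not change the argument.
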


\begin{proof}
Similarly to the proof in Appendix~\ref{ap:proof-additive}, the proof of this theorem is along the same lines as the proof of Theorem~\ref{thm:continuized}, and we only highlight the major differences. The process $\bar{x}_t = (t,x_t,z_t)$ satisfies the equation 
\begin{align*}
&\diff \bar{x}_t = b(\bar{x}_t) \diff t + \int_{\Xi}G(\bar{x}_t,\xi) \diff N(t,\xi), && b(\bar{x}_t) = \begin{pmatrix}
1 \\
\eta_t(z_t-x_t) \\
\eta_t'(x_t-z_t) 
\end{pmatrix}, &&G(\bar{x}_t,\xi) = \begin{pmatrix}
0 \\
-\gamma_t \frac{\proj_{\xi \xi}}{\cP_\xi} \nabla f(x_t, \xi) \\
-\gamma_t' \nabla f(x_t, \xi)
\end{pmatrix}.
\end{align*}
We also consider a slightly different Lyapunov function $\phi_t$ that takes into account the projector $\proj $:
\begin{equation*}
\phi_t = A_t\left(f(x_t)-f_*\right) + \frac{B_t}{2}\Vert z_t - x_* \Vert^2_\proj   
\end{equation*}
This change of norm is essential to take into account the fact that $f$ is not strongly convex with respect to the euclidean norm, but only with respect to $\Vert \cdot \Vert_\proj $. We apply Proposition \ref{prop:ito} to $\phi_t = \varphi(\bar{x}_t) = \varphi(t,x_t,z_t)$
and obtain
\begin{align}
\label{eq:decomposition-phi-additive}
\phi_t = \phi_0 + \int_{0}^{t} I_s \diff s + M_t \, ,
\end{align}
where $M_t$ is a martingale and
\begin{align*}
I_t = \langle \nabla \varphi(\bar{x}_t), b(\bar{x}_t) \rangle +\E_\xi\varphi(\bar{x}_{t}+G(\bar{x}_{t}, \xi) ) - \varphi(\bar{x}_{t}) \, . 
\end{align*}
The computation of the first term remains the same: the inequality \eqref{eq:aux-4}-\eqref{eq:aux-5} holds. The computation of the second term becomes 
\begin{align*}
\E_\xi \varphi(\bar{x}_{t}+G(\bar{x}_{t},\xi) ) - \varphi(\bar{x}_{t}) &= A_t\left(\E_\xi f\left(x_t-\gamma_t \frac{\proj_{\xi \xi}}{\cP_\xi} \nabla f(x_t, \xi)\right)-f(x_t)\right) \\
&\qquad+ \frac{B_t}{2} \left(\E_\xi\Vert (z_t-x_*) - \gamma_t' \nabla f(x_t, \xi) \Vert^2_\proj  - \Vert z_t-x_*\Vert^2_\proj \right) \, .
\end{align*}
As $f$ is $M$-smooth, 
\begin{align*}
f\left(x_t-\gamma_t\frac{\proj_{\xi \xi}}{\cP_\xi} \nabla f(x_t, \xi)\right)-f(x_t) &\leq \langle \nabla f(x_t) , - \gamma_t \frac{\proj_{\xi \xi}}{\cP_\xi} \nabla f(x_t, \xi) \rangle + \frac{1}{2} \Vert \gamma_t \frac{\proj_{\xi \xi}}{\cP_\xi} \nabla f(x_t, \xi) \Vert^2_M  \, .\end{align*}
In the additive case, the variance is bounded by $\sigma^2$. In this case, we have that:
\begin{equation}
    \Vert \frac{\proj_{\xi \xi}}{\cP_\xi} \nabla f(x_t, \xi) \Vert^2_M = \frac{M_{\xi \xi} \proj_{\xi \xi}}{\cP_\xi^2} \Vert \nabla f(x_t, \xi) \Vert^2_\proj  \leq L \Vert \nabla f(x_t, \xi)\Vert^2_\proj ,
\end{equation}
and similarly: 
\begin{equation}
    \langle \nabla f(x_t) , - \gamma_t \frac{\proj_{\xi \xi}}{\cP_\xi} \nabla f(x_t, \xi) \rangle = - \gamma_t \frac{\proj_{\xi \xi}}{\cP_\xi^2}\Vert \nabla_\xi f(x_t)\Vert^2 = \gamma_t \Vert \nabla f(x_t, \xi)\Vert^2_\proj .
\end{equation}
Thus:
\begin{align*}
\E_\xi f\left(x_t-\gamma_t \frac{\proj_{\xi \xi}}{\cP_\xi} \nabla f(x_t, \xi )\right)-f(x_t) 
&\leq \gamma_t (1 - \gamma_t L) \E_\xi \Vert \nabla f(x_t, \xi)\Vert^2_\proj \, .
\end{align*}
Similarly, thanks to the unbiasedness of $\nabla f(x_t, \xi)$, 
\begin{align*}
	\E_\xi\Vert (z_t-x_*) - \gamma_t' \nabla f(x_t,\xi) \Vert^2_\proj  &- \Vert z_t-x_*\Vert^2_\proj \\
	&= -2 \gamma_t' \langle \E_\xi \proj  \nabla f(x_t,\xi), z_t-x_* \rangle + \gamma_t'^2 \E_\xi \Vert  \nabla f(x_t,\xi) \Vert^2_\proj   \\
	&\leq -2 \gamma_t' \langle \nabla f(x_t), z_t-x_* \rangle + \gamma_t'^2 \E_\xi \Vert  \nabla f(x_t,\xi) \Vert^2_\proj  \, .
\end{align*}
This gives
\begin{align*}
\varphi(\bar{x}_{t}+G(\bar{x}_{t}) ) - \varphi(\bar{x}_{t}) \leq - & B_t \gamma_t' \langle \nabla f(x_t), z_t- x_* \rangle\\
&+ \left(B_t \gamma_t'^2 - A_t \gamma_t \left(2 - L\gamma_t\right)\right) \frac{1}{2} \E_\xi \Vert  \nabla f(x_t,\xi) \Vert^2_\proj   \, .
\end{align*}
Combining the bounds, we obtain 
\begin{align*}
I_t  &\leq \left(\frac{\diff A_t}{\diff t} - A_t \eta_t\right)  \langle \nabla f(x_t), x_t- x_* \rangle + \left(\frac{\diff B_t }{\diff t}- B_t\eta_t'\right) \frac{1}{2}\Vert z_t - x_* \Vert^2_\proj  \\
&\qquad+ (A_t \eta_t- B_t \gamma_t')  \langle \nabla f(x_t), z_t- x_* \rangle + \left(B_t \eta_t'- \frac{\diff A_t}{\diff t} \mu\right) \frac{1}{2}\Vert x_t - x_* \Vert^2_\proj \\ 
&\qquad  + \left(B_t \gamma_t'^2 - A_t \gamma_t \left(2 - L\gamma_t\right)\right) \frac{1}{2} \E_\xi \Vert  \nabla f(x_t,\xi) \Vert^2_\proj  \,.
\end{align*}
We see that we obtain a result that is very similar to that of the deterministic case.  The choices of parameters of Theorem \ref{thm:coord_descent} cancel all first five prefactors, and satisfy $\gamma_t = \frac{1}{L}$, $A_t L \gamma_t^2 = B_t \gamma_t'^2$. We thus obtain $I_t \leq 0$ and so $\phi_t$ is a supermartingale, and the rest follows as in Appendix~\ref{ap:proof-continuized}. 
\end{proof}

\section{Accelerated Decentralized Optimization with Randomized Gossip Communications.}
\label{app:decentralized_optim}

We now consider the setting of decentralized optimization considered in Section~\ref{sec:decentralized_opt}. More specifically, recall that we wish to solve:
\begin{equation}\label{eq:app:pbm_optim_decentralisé}
    \min_{x\in\R^d} \left\{ f(x)=\frac{1}{|\cV|}\sum_{v\in\cV} f_v(x)\right\},
\end{equation}
where the function $f_v$ is privately held by node $v \in \cV$. To solve this problem, a classical approach is to use a dual formulation~\citep{scaman2017optimal, hendrikx2018accelerated}. We first rewrite Problem~\eqref{eq:app:pbm_optim_decentralisé} as:
\begin{equation}\label{eq:app:pbm_optim_decentralisé_constrained}
    \min_{X\in\R^{|\cV|\times d}, \ X_u = X_v \ \forall \{u,v\} \in \cE } \left\{ F(X)=\frac{1}{|\cV|}\sum_{v\in\cV} f_v(X_v)\right\},
\end{equation}
where $X_v \in \R^d$ corresponds to the local parameter of node $v$, and the equality constraints ensures equivalence between~\eqref{eq:app:pbm_optim_decentralisé} and~\eqref{eq:app:pbm_optim_decentralisé_constrained}. The constraints are linear and can be expressed in matrix form as: 
\begin{equation}
    A^\top X = 0,
\end{equation}
with $A \in \R^{\cE \times \cV}$ such that $\ker(A^{\top})={\rm Span}(1,...,1)$ the constant vector. 
%\raphael{a priori tu as des symboles pour ces tailles plus haut} \hadrien{Pas trouvé pour les edges, et un de chaque c'est moche, là au moins c'est clair et self contained.} \mathieu{J'aurai même plutôt tendance à écrire $\R^{\cE\times \cV}$ vu que nos indices sont directement les arrêtes et noeuds}
The natural choice for matrix $A$ is to choose a square root of the Laplacian matrix of graph $G$. For $(e_v)_{v\in\cV}$ and $(e_{\{v,w\}})_{\{v,w\}\in\E}$ the canonical bases of $\R^{\cV}$ and $\R^{\cE}$, $A$ is thus that for any $\{v,w\}\in\cE$:
\begin{equation*}
    Ae_{\{v,w\}}= \sqrt{\cP_{\{v,w\}}}(e_v-e_w).
\end{equation*}
Matrix $A$ then satisfies $AA^{\top}=\cL$ the Laplacian matrix of graph $G$ with weights $\cP_{\{v,w\}}$.
Indeed, if $W_\edgeuv = \cP_\edgeuv (e_v - e_w) (e_v - e_w)^\top$ corresponds to the gossip matrix for edge $\edgeuv$, $A$ is such that:
\begin{equation}
    A A^\top = \sum_{\edgeuv \in \cE} W_\edgeuv = \cL.
\end{equation} 
Then, introducing Lagrange multipliers $\lambda$, we obtain through Lagrangian duality that Problem~\eqref{eq:app:pbm_optim_decentralisé} is equivalent to:
\begin{equation}
    \label{eq:app:pb_dual}
    \max_{\lambda \in \R^{\cE \times d}} - F^*(A\lambda),
\end{equation}
with $F^*$ the convex conjugate of $F$. Following the approach of~\citet{hendrikx2018accelerated}, we then apply Accelerated Coordinate Descent to this dual problem. Yet, we use the \emph{continuized} version of Theorem~\ref{thm:coord_descent}, which allows us to remove the global iterations counter on which previous approaches rely. We see that Problem~\eqref{eq:app:pb_dual} has exactly the right form to apply Theorem~\ref{thm:coord_descent}, leading to the following dual iterations: 
\begin{align}
\begin{split}\label{eq:continuized_cd}
&\diff \lambdax_t = \eta_t (\lambdaz_t - \lambdax_t) \diff t - \gamma_t \int_{\R_{\ge0} \times \cE} \frac{R_\edgeuv}{\cP_\edgeuv^2} e_\edgeuv e_\edgeuv^\top A^\top \nabla F^*( A \lambdax_t) \diff N(t,\edgeuv) \, , \\
&\diff \lambdaz_t = \eta_t' (\lambdax_t - \lambdaz_t) \diff t- \gamma_t'  \int_{\R_{\ge0} \times \cE} \frac{1}{\cP_\edgeuv} e_\edgeuv e_\edgeuv^\top A^\top \nabla F^*( A \lambdax_t) \diff N(t,\edgeuv) \,,
\end{split}
\end{align}
where $P = A^\dagger A$ with $A^\dagger$ is the pseudo-inverse of $A$, $R_\edgeuv = e_\edgeuv^\top A^\dagger A e_\edgeuv$. Now, we multiply these iterations by $A$ on the left (which is standard), and we rewrite them with the following iterates:
\begin{equation}
    y_t = A \lambdax_t, \qquad z_t = A \lambdaz_t.
\end{equation}
Note that $y_t, z_t \in \R^{|\cV| \times d}$, and are thus variables associated with \emph{nodes} of the graph.  
\begin{align}
\begin{split}\label{eq:continuized_cd}
&\diff y_t = \eta_t (z_t - y_t) \diff t - \gamma_t \int_{\R_{\ge0} \times \cE} \frac{R_\edgeuv}{\cP_\edgeuv^2} W_\edgeuv \nabla F^*(y_t) \diff N(t,\edgeuv) \, , \\
&\diff z_t = \eta_t' (y_t - z_t) \diff t- \gamma_t'  \int_{\R_{\ge0} \times \cE} \frac{1}{\cP_\edgeuv} W_\edgeuv \nabla F^*( y_t) \diff N(t,\edgeuv) \,,
\end{split}
\end{align}
where we recall that $W_\edgeuv = \cP_\edgeuv (e_v - e_w) (e_v - e_w)^\top$ corresponds to the gossip matrix for edge $\edgeuv$. Besides, the dual gradients $\nabla F^*(y_t)$ are such that $e_v^\top \nabla F^*(y_t) = \nabla f_v^*( e_v^\top y_t)$, and so each component can be computed locally at node $v$. 

In summary, the distributed decentralized algorithm writes as follows.
Upon activation of edge $\{v_k,w_k\}$ at time $T_k$,
\begin{align}
\begin{split} \label{eq:accel_decentralized_optim}
    & G_\edgeuvk(T_k) = \omega_\edgeuvk \left[ \nabla f^*( (y_{T_k^-})_{v_k}) - \nabla f^*( (y_{T_k^-})_{w_k})\right]\\[10pt]
    & y_{T_k}(v_k)= y_{T_k^-}(v_k) - \gamma_t \frac{R_\edgeuvk}{\cP_\edgeuvk^2} G_\edgeuvk(T_k)\, , \\
    & y_{T_k}(w_k) = y_{T_k^-}(w_k) + \gamma_t \frac{R_\edgeuvk}{\cP_\edgeuvk^2} G_\edgeuvk(T_k)\, ,\\[10pt]
    &z_{T_k}(v_k)= z_{T_k^-}(v_k) - \gamma_t^\prime G_\edgeuvk(T_k)\, , \\
    &z_{T_k}(w_k) = z_{T_k^-}(w_k) + \gamma_t^\prime G_\edgeuvk(T_k)\, .
\end{split}
\end{align}
Between these updates, $y_t(v)$ and $z_t(v)$ locally mix at all nodes $v\in\cV$, according to the coupled ODE:
\begin{align*}
    \diff y_t(v)&= \eta_t (z_t(v)-y_t(v))\diff t,\\
    \diff z_t(v)&= \eta_t^\prime (y_t(v)-z_t(v))\diff t.
\end{align*}
This algorithm can be implemented with local computations and pairwise communications only, since an update along edge $\edgeuv$ only involves the parameters and functions of nodes $v$ and $w$. In order to fully describe this algorithm, we need to specify the various parameters. We do so, with the corresponding rate of convergence, in the following theorem.

\begin{theorem}[Asynchronous Accelerated Decentralized Optimization]
	\label{thm:decentralized_optim_appendix}
    Assume that each $f_v$ is $\mu$-strongly-convex with $\mu>0$ and $L$-smooth. Let $L_{\rm dual} = \frac{1}{\mu} \max_\edgeuv \frac{\proj_\edgeuv}{\cP_\edgeuv}$, where we recall that $R_\edgeuv = (A^\dagger A)_{\edgeuv, \edgeuv}$. Then, let $\theta^\prime_{\rm ARG} = \sqrt{\mu_{\rm gossip} / \max_\edgeuv \frac{\proj_\edgeuv}{\cP_\edgeuv}} $ where $\mu_{\rm gossip}$ is the smallest non-zero eigenvalue of the Laplacian of the graph $\cG$, and $\kappa = L / \mu$ is a bound on the condition number of $f$. We choose the constant parameters $\eta_t = \eta_t' \equiv \frac{\theta^\prime_{\rm ARG}}{\sqrt{\kappa}}$, $\gamma_t \equiv \frac{1}{L_{\rm dual}}$, $\gamma_t' \equiv \sqrt{\frac{L}{\mu_{\rm gossip} L_{\rm dual}}}$. The iterates produced by the algorithm described in~\eqref{eq:accel_decentralized_optim} verify:
		\begin{align*}
		\E \sum_{v\in V}\frac{1}{2}\|\nabla f_v^*(z_t(v)) - x_\star\|^2 \leq C_{0}^{\rm dual} \exp \left(-\frac{\theta^\prime_{\rm ARG}}{\sqrt{\kappa}}t\right) \,,
		\end{align*}	
with $C_0^{\rm dual} =  \frac{\lambda_{\max}(AA^\top)}{\mu}\left(F^*(A \lambdax_0) - F^*(A \lambda_\star) + \frac{\mu_{\rm gossip}}{2L} \Vert \lambdaz_0 - \lambda_\star \Vert^2_{A^\dagger A} \right)$, with $\lambda_\star$ a solution to the dual problem. 
\end{theorem}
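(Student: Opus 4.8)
The plan is to realize Problem~\eqref{eq:app:pbm_optim_decentralisé} as a single instance of the continuized accelerated coordinate descent of Theorem~\ref{thm:coord_descent}, run on the dual variable, and then transport the resulting guarantee back to the primal node variables. First I would fix the dual objective $G(\lambda) = F^*(A\lambda)$, whose minimization is exactly the dual problem~\eqref{eq:app:pb_dual} obtained from the consensus reformulation~\eqref{eq:app:pbm_optim_decentralisé_constrained}. Since $A = A\proj$ with $\proj = A^\dagger A$ the orthogonal projector onto the row space of $A$ (the jumps and the mixing ODE only ever see $A\lambda$), one has $G(\lambda) = g(\proj\lambda)$ for $g = F^*\circ A$, so $G$ is of the composite form $x\mapsto g(\proj x)$ required by Theorem~\ref{thm:coord_descent}, with coordinates indexed by the edges $\edgeuv\in\cE$.

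Next I would verify the two structural hypotheses and read off the constants. Strong convexity of each $f_v$ (parameter $\mu$) makes $F^*$ smooth, hence $\nabla^2 G = A^\top \nabla^2 F^*(A\lambda)\,A \preccurlyeq \tfrac1\mu A^\top A =: M$; this identifies the smoothness matrix, and together with $R_\edgeuv = (A^\dagger A)_{\edgeuv,\edgeuv}$ and the diagonal of $A^\top A$ being proportional to $\cP_\edgeuv$, it reduces the step-size condition $L \geq \max_\xi M_{\xi\xi}\proj_{\xi\xi}/\cP_\xi^2$ of Theorem~\ref{thm:coord_descent} to the stated $L_{\rm dual} = \tfrac1\mu\max_\edgeuv R_\edgeuv/\cP_\edgeuv$. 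Dually, $L$-smoothness of each $f_v$ gives $\nabla^2 F^*\succcurlyeq \tfrac1L\,\Id$, so $\nabla^2 G\succcurlyeq \tfrac1L A^\top A\succcurlyeq \tfrac{\mu_{\rm gossip}}{L}\proj$ on the row space, because the smallest nonzero eigenvalue of $AA^\top = \cL$ is $\mu_{\rm gossip}$; thus $G$ is strongly convex with respect to $\proj$ with parameter $\mu_{\rm dual} = \mu_{\rm gossip}/L$. Substituting $L\to L_{\rm dual}$, $\mu\to\mu_{\rm dual}$ into Theorem~\ref{thm:coord_descent}.(2) produces exactly the rate $\sqrt{\mu_{\rm dual}/L_{\rm dual}} = \theta'_{\rm ARG}/\sqrt{\kappa}$, and the chosen $\eta_t,\gamma_t,\gamma_t'$ are the matching parameters. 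Invoking the theorem --- equivalently, the supermartingale $\phi_t = A_t(G(\lambdax_t)-G(\lambda_\star)) + \tfrac{B_t}{2}\NRM{\lambdaz_t-\lambda_\star}_\proj^2$ --- yields exponential decay of $\E\,\phi_t$ and in particular of $\E\NRM{\lambdaz_t-\lambda_\star}_\proj^2$, controlled by the initial value $\phi_0$, whose normalization is precisely the parenthesis defining $C_0^{\rm dual}$.

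Finally I would transfer the bound to the primal iterates. Left-multiplying the dual recursion by $A$ and setting $z_t = A\lambdaz_t$, $y_t = A\lambdax_t$ turns it into the node-wise updates~\eqref{eq:accel_decentralized_optim}, with $W_\edgeuv = \cP_\edgeuv(e_v-e_w)(e_v-e_w)^\top$; this is the change of variables that makes the scheme implementable by pairwise communication. Since $f_v$ is $\mu$-strongly convex, $\nabla f_v^*$ is $\tfrac1\mu$-Lipschitz, and at the dual optimum $\nabla f_v^*((A\lambda_\star)_v) = x_\star$ for every $v$, so $\sum_v \NRM{\nabla f_v^*(z_t(v)) - x_\star}^2 \leq \tfrac{1}{\mu^2}\NRM{A(\lambdaz_t-\lambda_\star)}^2 \leq \tfrac{\lambda_{\max}(AA^\top)}{\mu^2}\NRM{\lambdaz_t-\lambda_\star}_\proj^2$, where the last step uses $A^\top A\preccurlyeq \lambda_{\max}(AA^\top)\proj$ on the row space. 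Combining this with the dual decay and folding the spectral and conjugate-smoothness factors into the single constant $C_0^{\rm dual}$ gives the claimed estimate.

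The main obstacle is the bookkeeping across the two changes of geometry. On the one hand one must confirm that the abstract coordinate step-size condition collapses cleanly to $L_{\rm dual}$ through the diagonal entries of $A^\top A$ and $A^\dagger A$; on the other hand one must control the mismatch between the dual geometry $\NRM{\cdot}_\proj$, in which Theorem~\ref{thm:coord_descent} is naturally expressed, and the Euclidean node geometry $\NRM{A\,\cdot}^2$ appearing in the statement, which is exactly where $\lambda_{\max}(AA^\top)$ and the conjugate-Lipschitz constant $1/\mu$ enter and must be reconciled with the (normalization-sensitive) constant $C_0^{\rm dual}$. One must also check that left-multiplication by $A$ commutes with both the linear mixing ODE and the jump updates, so that the node iteration~\eqref{eq:accel_decentralized_optim} is genuinely equivalent to the dual coordinate-descent recursion to which the theorem applies.
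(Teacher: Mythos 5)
Your proposal is correct and follows essentially the same route as the paper's proof: dualize the decentralized problem, apply the continuized accelerated coordinate descent of Theorem~\ref{thm:coord_descent} to $\lambda \mapsto F^*(A\lambda)$ with dual smoothness constant $L_{\rm dual}$ (coming from $\mu$-strong convexity of the $f_v$) and dual strong convexity $\mu_{\rm gossip}/L$ with respect to $\proj = A^\dagger A$ (coming from $L$-smoothness), then transfer the guarantee to the primal iterates via the Lipschitzness of $\nabla f_v^*$ and the factor $\lambda_{\max}(AA^\top)$. If anything, you are slightly more careful than the paper on two points it glosses over: extracting the decay of $\E \Vert \lambdaz_t - \lambda_\star \Vert_{\proj}^2$ directly from the supermartingale (the statement of Theorem~\ref{thm:coord_descent} only controls function values), and tracking the conjugate-Lipschitz constant, where the paper's bookkeeping of constants is looser.
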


Note that $\theta_{\rm ARG}^\prime$ is slightly different from $\theta_{\rm ARG}$. Yet, following~\citet{hendrikx2018accelerated}, an equivalent of Corollary~\ref{cor:accelerated_gossip} can be obtained for $\theta_{\rm ARG}^\prime$. To obtain Theorem~\ref{thm:acc_asy_decentralized_optim}, we simply choose $\lambdax_0 = \lambdaz_0$ and bound the dual function suboptimality by the distance to optim using the smoothness and strong convexity of $F^*$.

We stress the fact that the accelerated algorithm described in this section, as well as accelerated randomized gossip in Section \ref{sec:gossip}, are decentralized and asynchronous: operations are local and do not require any global synchronization, provided that a continuous time clock can be shared. This is possible only thanks to the continuized framework. However, there are some limitations: even if these algorithms are the first to achieve these rates without any global synchronization, computations and communications are here assumed to happen instantly, or to take a negligible time. Handling communication and computation physical capacity constraints such as delays or node/edge overloads in our algorithms as in \cite{even2021decentralized} combined with accelerated schemes is left for future works.

\begin{proof}
First note that the Hessian of the dual objective writes for some $\lambda \in \R^{|\cE| \times d}$:
\begin{equation}
    A^\top \nabla^2 F^*(A\lambda) A \succcurlyeq \frac{1}{L} A^\top A,
\end{equation}
since $F^*$ is $L^{-1}$ strongly-convex when $F$ is $L$-smooth~\citep{kakade2009duality}. Thus, the dual objective is $\mu_{\rm gossip} / L$ strongly convex on the orthogonal of the kernel of $A$. Similarly, the smoothness of the dual objective in direction $\edgeuv$ is equal to:
\begin{equation}
    M_{\edgeuv \edgeuv} = e_\edgeuv^\top A^\top \nabla^2 F^*(A\lambda) A e_\edgeuv \preccurlyeq \frac{1}{\mu} e_\edgeuv^\top A^\top A e_\edgeuv = \frac{\cP_\edgeuv}{2\mu}.
\end{equation}
Thus, we have that:
\begin{equation}
    L_{\rm dual} = \max_\edgeuv \frac{M_{\edgeuv\edgeuv} \proj_\edgeuv}{\cP_\edgeuv^2} = \frac{1}{\mu} \max_\edgeuv \frac{\proj_\edgeuv}{\cP_\edgeuv}.
\end{equation}
Then, the result follows directly from applying Theorem~\eqref{thm:coord_descent}, together with the smoothness of the dual gradients, since:
\begin{equation}
    \E \sum_{v\in V}\frac{1}{2}\Vert\nabla f_v^*(z_t(v)) - x_\star\Vert^2 \leq \E \frac{1}{2\mu} \Vert A \lambdaz_t - A \lambda_\star \Vert^2 \leq \frac{\lambda_{\max}(AA^\top)}{2\mu} \E\Vert \lambdaz_t - \lambda_\star \Vert^2_\proj.
\end{equation}
\end{proof}

Note that the primal parameter that we are interested in is $x_t = \nabla f^*(z_t)$, and not $y_t$ or $z_t$ which are dual parameters.

\end{document}